\documentclass[12pt,reqno]{amsart}
\usepackage{amsmath}
\usepackage{txfonts}
\usepackage{amsfonts}
\usepackage{a4wide}
\usepackage{amsmath,amsthm,amssymb,amscd}
\usepackage{latexsym}
\usepackage{hyperref}
\usepackage[numbers,sort&compress]{natbib}
\usepackage{hypernat}
\usepackage{color,enumitem,graphicx}
\usepackage{appendix}
\allowdisplaybreaks
\numberwithin{equation}{section}

\newtheorem{theorem}{Theorem}[section]

\newtheorem{corollary}[theorem]{Corollary}
\newtheorem{lemma}[theorem]{Lemma}

\theoremstyle{definition}

\begin{document}
	\title [Planar Schr\"{o}dinger-Poisson system with logarithmic nonlinearity]{Multiple solutions for planar Schr\"{o}dinger-Poisson system with logarithmic nonlinearity}

	\author{Wei Long \textsuperscript{1} ,\,\, Changchang Yan \textsuperscript{1}$^{, \ast}$, Jianghua Ye\textsuperscript{2}}
	
	\thanks{\textsuperscript{1}  School of Mathematics and Statistics, and Jiangxi Provincial Center for Applied Mathematics, Jiangxi Normal University, Nanchang, Jiangxi 330022, People's Republic of China}	
	\thanks{\textsuperscript{2}  School of Mathematics, and Center for Applied Mathematics of Guangxi (Guangxi University), Guangxi University, Nanning, Guangxi 530004, People's Republic of China}

	\thanks{Email:  lwhope@jxnu.edu.cn}
	\thanks{Email: mathycc@163.com}
    \thanks{Email: jhye@gxu.edu.cn}
	\thanks{*Corresponding author}

	\begin{abstract}		
This paper is concerned with the following  planar Schr\"{o}dinger-Poisson system 
\begin{align}\notag
    \
		\begin{cases} 
        -\Delta u+V(x)u+\lambda\phi u=u\ln u^2, &\quad \mathrm{in}\quad \mathbb{R}^2, \\ 
        \Delta \phi=u^{2}, &\quad \mathrm{in} \quad \mathbb{R}^2,\\
		\end{cases} \
\end{align}
where $\lambda\in \mathbb{R}$ is a parameter and $ V\in C^1(\mathbb{R}^2,\mathbb{R^+})$ is a coercive potential. Due to the presence of the logarithmic nonlinearity, the Nehari-Pohozaev manifold method developed by Ruiz \cite{R} is not applicable here. We apply a general minimax principle to an auxiliary functional and prove the existence of a nontrivial solution. Then, by minimizing the energy functional over the set of all nontrivial solutions, we succeed in showing that the system admits a ground state solution. If, in addition, $V$ is radially symmetric, we obtain infinitely many nonradial sign-changing solutions.
%First, we prove the existence of a nontrivial solution   by applying a general minimax principle to an auxiliary functional. Then, by minimizing the energy functional over the set of all nontrivial solutions, we succeed in showing that the system admits a ground state solution. If, in addition, $V$ is radially symmetric, we obtain the existence of infinitely many nonradial sign-changing solutions. However, the scaling parameter $t$ in the integral $\int_{\mathbb{R}^2}V(tx)u^2(x)\,dx$ cannot be directly extracted, and $u\ln u^2$ is sublinear near $0$, which necessitates a series of precise and careful analyses.
%By the constrained minimization method, we prove the  existence of ground state solution and sign-changing solution   with general potential and logarithmic nonlinear term. Furthermore, certain restrictions on  potential $V$ are required to establish the existence of ground state solutions.

\vspace{.1cm}
\noindent{\bf Keywords:} planar Schr\"{o}dinger-Poisson system, logarithmic nonlinearity, ground state solution, sign-changing solution
	\end{abstract}
		
	\maketitle
	\section{Introduction}
	In this paper, we  consider the following Schr\"{o}dinger-Poisson   system 
	\begin{align}\label{lem1-1}
		\
		\begin{cases} i \psi_{t}-\Delta \psi+Q(x) \psi+\lambda\varphi\psi=f(x,\psi),&\quad \mathrm{in}\ \ \mathbb{R}^{N} \times \mathbb{R}, \\ \Delta \varphi=|\psi|^{2}, &\quad \mathrm{in}\  \ \mathbb{R}^{N} \times\mathbb{R},\\
		\end{cases} \
	\end{align}
	where the function  \(\psi: \mathbb{R}^{N} \times\mathbb{R} \to \mathbb{C}\ (N \geq2)\)  {is the} time-dependent wave function, the function $ \varphi$ is an internal potential for a nonlocal self-interaction of the wave function $\psi$, the function \(Q\) is a real external potential, the nonlinearity $f(x,\psi)$ models the interaction among particles and $\lambda$ is a parameter. 
    System \eqref{lem1-1} is of considerable physical significance; it arises, for instance, in quantum mechanics (see \cite{BBL,CL,L}), semiconductor theory (see \cite{BF,MRS}), and plasma physics (see \cite{Lions}). In particular, systems like \eqref{lem1-1} have been introduced in \cite{BF} as a model describing solitary waves, for nonlinear stationary equations of Schr\"{o}dinger type interacting with an electrostatic field.
    
   % To solve the Poisson equation  $\Delta \varphi=|\psi|^{2}$, we introduce the fundamental solution of the Laplacian operator, denoted by $\Phi_N(x)$, which satisfies
	
	By adopting the standing wave ansatz $\psi(x,t)=e^{-i\mu t}u(x)$ for $\mu\in \mathbb{R}$,	the Schr\"{o}dinger-Poisson system \eqref{lem1-1} can be converted to 		
	\begin{align}\label{lem1-2}
		-\Delta u+V(x)u+\lambda[\Phi_N * u^2]u=f(x,u),\quad \mathrm{in} \quad  \mathbb{R}^N,
	\end{align}
   % cu\ln u^2 
   where $V(x)=Q(x)+\mu$, $\Phi_N$ is the fundamental solution of the Laplacian operator given by 
   \[
	\Phi_N(x) :=
	\begin{cases}
		\dfrac{1}{N(2-N)\omega_N}|x|^{2-N}, & N \geq 3, \\
		\dfrac{1}{2\pi}\ln|x|, & N = 2,
	\end{cases}
	\]
	and	\(\omega_{N}>0\) denotes the volume of the unit ball in \(\mathbb{R}^{N}\).
   %The equations \eqref{lem1-1}  and \eqref{lem1-2}  possess extensive physical applications in diverse physical fields: quantum mechanics, quantum optics, nuclear physics, transport-diffusion phenomena, open quantum systems, effective quantum gravity and Bose–Einstein condensation; see  \cite{6,7,8,9} and the references therein.

   In the three-dimensional case, the Schr\"{o}dinger-Poisson system with power nonlinearity 
   \begin{equation}\label{seqs12}
 \left\{
 \renewcommand{\arraystretch}{1.25}
 \begin{array}{ll}
 -\Delta u+V(x)u+\lambda \phi u = |u|^{p-2}u, \ & \mathrm{in}\  \ \mathbb{R}^3,\\
-\Delta \phi=u^2, \ & \mathrm{in}\  \ \mathbb{R}^3
 \end{array}
 \right.
 \end{equation}
 has been widely investigated, especially regarding the existence of positive solutions, sign-changing solutions, ground state solutions, and radial solutions; see \cite{A, BJL, GPV, WZ, JZ, LM1,JWZ,R,SSY1,CV,LLTY} and the references therein.
 When $V\equiv1$, Ruiz \cite{R} proved that system \eqref{seqs12} admits a positive radial solution for $p\in(3,6)$ and $\lambda>0$ by using the Nehari-Pohozaev manifold method. Moreover, for $p\in(2,3]$, he showed that the existence of solutions to system \eqref{seqs12} depends on the parameter $\lambda$. Specifically, system \eqref{seqs12} has no nontrivial solutions for $\lambda\in [\frac{1}{4},\infty)$, but admits at least one positive radial solution for $\lambda>0$ sufficiently small.
 When $V(x)$ is a coercive potential, i.e., $\lim\limits_{|x|\rightarrow \infty}V(x)=+\infty$, system \eqref{seqs12} was studied in \cite{WZ,JWZ}. Using a constraint variational method combined with Brouwer degree theory, Wang and Zhou \cite{WZ} established the existence of a sign-changing solution to system \eqref{seqs12} with $4<p<6$. In \cite{JWZ}, Jiang et al. treated the case $p\in (2,3)$ and proved that system \eqref{seqs12} possesses at least two positive solutions for $\lambda>0$ small enough.

%Much attention has been paid to the problem \eqref{lem1-2} with the nonlinear term $f(x,u)=|u|^{p-2}u$,  case $N \geq 3$ of the problem \eqref{lem1-2} has received extensive research over the last two decades;   

\vspace{.1cm}
However, in the two-dimensional case, $\Phi_2(x)$ is sign-changing and singular as $|x|$ approaches zero and infinity, which leads to substantial differences from the case $N=3$.
When $N=2$ and $f(x,u)=|u|^{p-2}u$, equation \eqref{lem1-2} becomes 
\begin{align}\label{1.4}
    -\Delta u+V(x)u+\lambda[\Phi_2 * u^2]u=|u|^{p-2}u,\quad \mathrm{in} \quad  \mathbb{R}^2.
\end{align}
The energy functional corresponding to equation \eqref{1.4} is 
\begin{align*}%\label{1-5}
		J(u)=\dfrac{1}{2}\int_{\mathbb{R}^2}\left[ |\nabla u|^2+V(x)u^2\right]dx +\frac{\lambda}{4}\int_{\mathbb{R}^2}\int_{\mathbb{R}^2} \ln |x-y|u^2(x)u^2(y)dxdy-\frac{1}{p}\int_{\mathbb{R}^2}|u|^pdx.
	\end{align*}
However, the functional $J$ is not well-defined on $H^1(\mathbb{R}^2)$. To
overcome this difficulty, inspired by \cite{S1}, Cingolani and Weth \cite{CW} investigated equation \eqref{1.4} in a smaller Hilbert space
\begin{align}\label{X}
    X := \left\{ u \in H^1(\mathbb{R}^2) \,\bigg|\, \int_{\mathbb{R}^2} \ln(1+|x|) u^2 dx < \infty \right\}
\end{align}
equipped with the norm 
\begin{align*}
  \|u\|_X:=\left[\int_{\mathbb{R}^2} \left(|\nabla u|^2+V(x)u^2+\ln(1+|x|) u^2\right) dx\right]^{\frac{1}{2}}. 
\end{align*}
Then, $J\in C^1(X,\mathbb{R})$. When $p\geq 4$ and $V\in L^{\infty}(\mathbb{R}^2)$ is continuous and $\mathbb{Z}^2$-periodic, Cingolani and Weth proved that equation \eqref{1.4} possesses infinitely many geometrically distinct solutions, as well as ground state solutions. If, in addition, $V$ is a positive constant, they showed the existence of nonradial solutions to equation \eqref{1.4} with arbitrarily many nodal domains. Later,  Du and Weth \cite{DW} treated the case $p>2$ and $V\equiv1$, and obtained that equation \eqref{1.4} admits a ground state solution and infinitely many nonradial sign-changing solutions.
For more results on equation \eqref{1.4}, we refer the reader to \cite{Az,CRT,GLL,LRZ,LRTZ,FW} and the references therein.
%\begin{align}\label{lem1-3}
%-\Delta u+V(x)u+m [\Phi_2 * u^2]u=|u|^{p-2}u,\quad \mathrm{in}\ \mathbb{R}^2,
%\end{align}
%where $2<p<4$ and $V(x)\in L^{\infty}(\mathbb{R}^2)$.  They proved the existence of a sign-changing solution of \eqref{lem1-3} by suitably defined actions of subgroups of the orthogonal group $O(2)$.

\vspace{.1cm}
When $\lambda=0$  and \(f(x,u)=u\ln u^2\), equation  \eqref{lem1-2} becomes 
\begin{align}\label{1.5}
-\Delta u+V(x)u=u\ln u^2, \quad \mathrm{in} \ \ \mathbb{R}^N,
\end{align}
which has been studied in \cite{CG,DMS,S,T,WZ1}.
If $V$ is a constant and $N\geq 3$, d’Avenia et al. \cite{DMS} proved that equation \eqref{1.5} admits a unique positive solution, which is radially symmetric and nondegenerate using the nonsmooth critical point theory. In \cite{T}, Troy dealt with the case $V\equiv1$ and $N\in [1,9]$, and showed that the positive radial solution of equation \eqref{1.5} is unique.
By utilizing direction derivative and the Nehari-type constrained minimization method, Shuai proved the existence of positive and sign-changing solutions to equation \eqref{1.5} under different assumptions on $V$ in \cite{S}.

%many researchers have made efforts to study equation  \eqref{lem1-2}. For example, Squassina and Szulkin \cite{SS2,SS1} studied the following logarithmic Schr\"{o}dinger equation
%where $V(x)$ and $Q(x)$ are spatially periodic. They showed the existence of positive ground-state solution and infinitely many high energy solutions by  non-smooth critical point theory for lower semi-continuous functionals.
% for problem \eqref{lem1-2} when $V(x)$ is constant. If $ V(x)$ is radially symmetric, Shuai \cite{S} showed the existence of  sign-changing solutions for equation \eqref{lem1-2} using the constrained minimization method. Furthermore,  Azzollini et al. \cite{ADP} investigate the following equation 
%\begin{align}\label{lem1-4}
%-\Delta u+(\ln|\cdot|\ast |u|^2)u=(\ln |\cdot|\ast |u|^q)|u|^{q-2}u,\quad \mathrm{in}\ \mathbb{R}^2,
%\end{align}
%where $\frac{8}{3}<q<4$. Using variational arguments, they proved that there exist infinitely many radially symmetric classical solutions for equation \eqref{lem1-4}. 
\vspace{.1cm}
However, there are few results on the Schr\"{o}dinger-Poisson system with logarithmic nonlinearity. 
%Another hot topic is the existence of solutions for equation \eqref{lem1-1} with \(\lambda\not \neq 0\) and logarithmic nonlinearity $u\ln u^2$.
In dimension three, Wang et al. \cite{WZZ} studied the following equation
\begin{align}\label{p1}
   \begin{cases}
-\Delta u + V(x)u + \lambda \phi(x)u = u\ln u^2, \ \ \mathrm{in}\  \ \mathbb{R}^3,\\
-\Delta \phi = u^2,\quad \displaystyle\lim_{|x|\to+\infty}\phi(x)=0,
\end{cases} 
\end{align}
where $V\in C(\mathbb{R}^3, \mathbb{R}^+)$ is a coercive potential. They proved that  there exists $\lambda_0 > 0$ such that equation \eqref{p1} has a ground state solution for any $\lambda \in (0,\lambda_0)$. %, which blows up as $\lambda \to 0^+$ if $V(x) \sim (\log|x|)^{\frac12}$ at infinity.
Moreover, a nontrivial solution to equation \eqref{p1} with $\lambda<0$ is obtained.
Concerning dimension two, Dolbeault et al. \cite{DFJ} investigated the following equation
\begin{align}\label{1-4}
		-\Delta u+V(x)u+\lambda\left( \int_{\mathbb{R}^2}\ln |x-y| u^2(y)dy\right) u=u\ln u^2, \quad \mathrm{in}\quad \mathbb{R}^2
	\end{align}
    with the special potential $V(x)=2\ln (1+|x|^2)$. They studied the ground state energy of equation \eqref{1-4} by using some new logarithmic interpolation inequalities.

    \vspace{.1cm}
Motivated by the above results, in this paper, we investigate the equation \eqref{1-4} with a general potential. Specifically, we assume that $V$ satisfies the following conditions:

\vspace{.1cm}
\noindent $(V_1) $ $ V\in C^1(\mathbb{R}^2,   \mathbb{R^+})$ and there exist $\gamma>0$ and $m\in (0,+\infty]$ such that $\mathop {\lim }\limits_{|x| \to \infty } \frac{V(x)}{|x|^\gamma}=m$;
	%\\$ (V_2) $  $\left(\nabla V\cdot x \right)\geq 0$ for any $ x\in \mathbb{R}^2. $
	\\$ (V_2)$ there exists $\kappa>0$ such that $|\nabla V(x)\cdot x|\leq \kappa V(x)$ for any $x\in \mathbb{R}^2.$

\vspace{.1cm}
	Formally, the energy functional corresponding to equation \eqref{1-4} is 
    \begin{align*}%\label{1-5}
			I(u)\notag&=\dfrac{1}{2}\int_{\mathbb{R}^2}\left[ |\nabla u|^2+u^2+V(x)u^2\right]dx -\frac{1}{2}\int_{\mathbb{R}^2}u^2\ln u^2dx\\&\quad +\frac{\lambda}{4}\int_{\mathbb{R}^2}\int_{\mathbb{R}^2} \ln |x-y|u^2(x)u^2(y)dxdy.
        \end{align*}
       However, due to the appearance of the logarithmic nonlinearity, the functional $I$ is not well-defined on $X$ defined by \eqref{X}. In fact, the function $\varphi$ given in \cite[Example 2.1]{WZZ} satisfies $\varphi\in X$ and $\int_{\mathbb{R}^2}\varphi^2\ln \varphi^2dx=-\infty$.
       In order to overcome this difficulty, we introduce the space 
        $$\textit{E}:=\left\lbrace u\in H^1{(\mathbb{R}^2)\ |\ \int_{\mathbb{R}^2} V(x)u^2dx<\infty } \right\rbrace,$$
        which is a Hilbert space equipped with the following inner product and norm:
        \begin{align*}
            (u,v)_E:=\int_{\mathbb{R}^2}\left[ \nabla u\nabla v+V(x)uv\right] dx,\ \ 
            ||u||_E:=(u,u)_E^{\frac{1}{2}}.
        \end{align*}
  In fact, by the definition of $V$, we obtain $E\subset X$.      %$$||u||_E^2=\int_{\mathbb{R}^2}\left[ |\nabla u|^2+V(x)u^2\right] dx.$$
		%Let 
		%$$H^1(\mathbb{R}^2):=\left\lbrace u\in L^2{(\mathbb{R}^2)\ |\ \nabla u \in L^2{(\mathbb{R}^2}) } \right\rbrace $$
		%with the norm	
		%$$||u||^2=\int_{\mathbb{R}^2}\left[ |\nabla u|^2+u^2\right]dx.  $$	 
As shown in Sect. \ref{s2}, the functional $I$ is well-defined on $E$ and satisfies $I \in C^1(E, \mathbb{R})$. Moreover, 
for $u, \varphi \in E$,
\[
			\langle I'(u), \varphi \rangle = \int_{\mathbb{R}^2} \left[ \nabla u \nabla \varphi + V u \varphi -u \varphi \ln u^2\right] dx+ \lambda\int_{\mathbb{R}^2} \int_{\mathbb{R}^2} \ln |x-y| u^2(y)u(x)\varphi(x)dxdy\,.
			\]
            By a nontrivial solution of equation \eqref{1-4}, we mean a function $u\in E\setminus \{0\}$ such that $\langle I'(u), \varphi \rangle =0$ for any $\varphi \in E$. We say that $u_0\in E$ is a ground state solution of equation \eqref{1-4} if it is a least energy solution of equation \eqref{1-4}; namely, $u_0$ is a nontrivial solution and $I(u_0)\leq I(u)$ for every nontrivial solution $u$ of equation \eqref{1-4}.

	\vspace{.1cm}
	Our first result focuses on  the existence of ground state solution for equation \eqref{1-4} with $\lambda>0$. %To establish this result, we begin by defining the mountain pass value
	%$$c_{*,1}=\inf_{\gamma\in \Gamma}\max_{t\in [0,1]}I(\gamma (t)), \quad \mathrm{where}\ \Gamma=\{\gamma\in C([0,1],E)\ |\ \gamma(0)=0, \ I(\gamma(t))<0\}$$
    %and $$c_{**}:=\inf\{I(w)\ |\ w\in E\backslash\{0\} \ \mathrm{solves}\ \eqref{1-4} \}.$$
    
		\begin{theorem}\label{theorem1.1}
	Assume that $V$ satisfies $(V_1)$-$(V_2)$  and $\lambda>0$. Then, equation \eqref{1-4} has a ground state solution. %we have 
	%\\(i) $c_*>0 $ and equation \eqref{1-4} has a non-trivial solution $u\in E\backslash\{0\}$  satisfying $I(u)=c_*$.
	%\\(ii) Equation \eqref{1-4} has a ground state solution: a nontrivial solution  $v\in E\backslash\{0\}$  satisfying $I(v)=c_{**}$, where the ground state energy is
	%$$c_{**}:=\inf\{I(w)\ |\ w\in E\backslash\{0\} \ \mathrm{solves}\ \eqref{1-4} \}.$$	
	\end{theorem}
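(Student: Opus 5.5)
We follow the strategy announced in the abstract: first produce a nontrivial critical point at a minimax level through an auxiliary functional, and then minimize $I$ over the set of nontrivial solutions.

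\textbf{Step 1: functional framework.} We use the facts recalled in Sect.~\ref{s2}. On $E$ the functional $I$ is $C^1$. Because $V$ is coercive by $(V_1)$, the embedding $E\hookrightarrow L^q(\mathbb{R}^2)$ is compact for every $q\in[2,\infty)$. We split the Poisson term as $\ln|x-y|=\ln(1+|x-y|)-\ln(1+1/|x-y|)$ and write
\[
V_1(u)=\int\!\!\int\ln(1+|x-y|)u^2(x)u^2(y)\,dx\,dy,\qquad V_2(u)=\int\!\!\int\ln\Big(1+\frac{1}{|x-y|}\Big)u^2(x)u^2(y)\,dx\,dy .
\]
Hardy--Littlewood--Sobolev gives $|V_2(u)|\le C\|u\|_{8/3}^4$, and $V_1$ is weakly lower semicontinuous and controlled on $E\subset X$. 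The logarithm is handled in the usual way: $\int u^2\ln u^2\,dx$ is split into a superquadratic part, which is compact on $E$, and a sublinear part $-\int_{\{|u|<1\}}u^2\ln u^2\,dx\ge0$. The sublinear part is convex near $0$ and is dominated by $\int V u^2\,dx$, since coercivity of $V$ together with Young's inequality bound the tail.

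\textbf{Step 2: a nontrivial critical point.} The geometry is of mountain-pass type. Since $\lambda>0$, for fixed $u\ne0$ the scaling $u_t=tu$ gives
\[
I(tu)=\frac{t^2}{2}\big(\|u\|_E^2+|u|_2^2\big)-\frac{t^2}{2}\int u^2\ln u^2-\frac{t^2\ln t^2}{2}|u|_2^2+\frac{\lambda t^4}{4}\big(V_1-V_2\big).
\]
This expression is not directly negative for large $t$, so we use a dilation together with a sign choice. The coercive $V$ prevents free dilations, so instead we take $u$ supported in a small ball: then $V_1-V_2<0$, and $I(tu)\to-\infty$ as $t\to\infty$. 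Near $0$ the sign is unclear because of the $-t^2\ln t^2$ term; the fix is to take the level $c=\inf_{\gamma}\max I(\gamma)$ over paths from $0$ to a point $e$ with $I(e)<0$, and to show $c>0$. For this we bound $I(u)\ge\frac12\|u\|_E^2-C\|u\|_q^q-C\|u\|_{8/3}^4>0$ on a small sphere, discarding the nonnegative sublinear log part.

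Palais--Smale sequences need not be bounded, so we apply the general minimax principle of Jeanjean type to the auxiliary functional $\tilde I(\theta,u)=I(e^{\theta}u(e^{\theta}\cdot))$ on $\mathbb{R}\times E$. Condition $(V_2)$ is exactly what lets us differentiate $\int V(e^{-\theta}x)u^2\,dx$ in $\theta$ with a control $\le\kappa\int Vu^2\,dx$. This yields a (PS)$_c$ sequence with $P(u_n)\to0$, where $P$ is the Pohozaev-type functional. Combining $I(u_n)$, $\langle I'(u_n),u_n\rangle$ and $P(u_n)$, the Poisson and log terms cancel to give bounds on $\|u_n\|_E$. We expect boundedness of $\int Vu_n^2\,dx$ via $(V_2)$ to be \emph{the main obstacle}: the term $\frac12\int(\nabla V\cdot x)u_n^2\,dx$ only has the bound $\kappa\int Vu^2\,dx$, so the algebra must be arranged carefully. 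We also need $V_1(u_n)$ bounded, which we get from $\lambda V_1(u_n)=\lambda V_2(u_n)+O(\|u_n\|_E^2)$.

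Once boundedness holds, compactness of $E\hookrightarrow L^q$ together with the Cingolani--Weth argument for $V_1$ gives $u_n\to u$ in $E$. Hence $I(u)=c>0$ and $u$ is a nontrivial solution.

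\textbf{Step 3: ground state.} Let $m=\inf\{I(w): w\ne0,\ I'(w)=0\}\le c$. Every solution satisfies both the Nehari and the Pohozaev identities. Combining them as in Step 2 gives $I(w)\ge \delta\|w\|_E^2-C$, so $m>-\infty$. Moreover, nontrivial solutions are bounded away from $0$ in $E$: from $\langle I'(w),w\rangle=0$ and the Step 1 estimates, $\|w\|_E^2\le C(\|w\|_q^q+\|w\|_{8/3}^4)$, which gives $\|w\|_E\ge\rho>0$. Take a minimizing sequence of solutions $w_n$. It is bounded by the same argument as in Step 2, now exact since $P(w_n)=0$, and so converges strongly in $E$ by compactness. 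Its limit $w$ is a solution with $\|w\|_E\ge\rho$ and $I(w)=m$, which is the required ground state.
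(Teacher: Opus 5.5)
Your Step 2, and Step 3 with it, has a genuine gap: the boundedness of the minimax sequence, which is the core of the paper's proof (Lemma \ref{lemma2.10}), is not established.

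The mechanism you state cannot work. You propose combining $I(u_n)$, $\langle I'(u_n),u_n\rangle$ and $P(u_n)$ so that the Poisson and logarithmic terms both cancel. Write $N(u):=\int\!\!\int\ln|x-y|\,u^2(x)u^2(y)\,dx\,dy$. Both $\langle I'(u),u\rangle$ and $P(u)$ contain these two terms in the same proportion, $\lambda N(u)-\int u^2\ln u^2$. By contrast, $I(u)$ contains $\frac{\lambda}{4}N(u)-\frac12\int u^2\ln u^2$. So every combination that kills both terms has zero coefficient on $I$, and it says nothing about the level $c$. Your $L^2$-preserving dilation gives $\partial_\theta\tilde I=\langle I'(u),u\rangle-P(u)=\|\nabla u\|_2^2-\|u\|_2^2-\frac12\int(\nabla V\cdot x)u^2-\frac{\lambda}{4}\|u\|_2^4$, which contains neither term. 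Your bound for the Poisson piece $\int\!\!\int\ln(1+|x-y|)u_n^2u_n^2$ via the Nehari identity also presupposes that $\|u_n\|_E$ is already bounded.

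What works is to cancel only the Poisson term. The paper uses $I(u_n)-\frac{1}{4(a-1)}K(u_n)$ with $K=a\langle I'(u),u\rangle-P(u)$ and $a>\max\{2,1+\frac{\kappa}{2}\}$. Given the Cerami property, this is the same information your dilation yields, namely $I-\frac14\langle I',u\rangle-\beta(\langle I',u\rangle-P)$ with small $\beta=\frac{1}{4(a-1)}$. What remains is:
\begin{itemize}
\item positive multiples of $\|\nabla u_n\|_2^2$, $\|u_n\|_2^2$ and $\int Vu_n^2$. Here $(V_2)$ enters, and this is the easy part, not the main obstacle you flag.
\item the term $\frac{\lambda}{16(a-1)}\|u_n\|_2^4$.
\item the remainder $-\frac14\int u_n^2\ln u_n^2\ge -C\|u_n\|_q^q$ with $2<q<3$.
\end{itemize}

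The real difficulty is this superquadratic remainder. It is absorbed using Gagliardo--Nirenberg, $\|u\|_q^q\le C\|u\|_2^2\|\nabla u\|_2^{q-2}$, together with the quartic term (this is where $\lambda>0$ is used) and $q<3$. The paper does this by the blow-up rescaling $w_n=s_n^2u_n(s_n\cdot)$ with $s_n=\|\nabla u_n\|_2^{-1/2}$; Young's inequality would also do. The same inequality is what gives $m>-\infty$ and boundedness of the minimizing sequence in your Step 3, so those claims are currently unsupported as well.

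The rest of your proposal is fine. Your mountain-pass geometry, using $tu$ with $u$ supported in a ball of radius $<\frac12$ so that $N(u)<0$, is correct and simpler than the paper's dilation in Lemma \ref{lemma2.7}. The compactness step and the minimization over solutions match the paper.
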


    The proof of Theorem \ref{theorem1.1} is based on the variational method. However, the presence of the logarithmic convolution makes the Nehari-type constrained minimization method used in \cite{S} inapplicable in our setting. Moreover, the logarithmic nonlinearity prevents us from using the Nehari-Pohozaev manifold method developed by Ruiz \cite{R} since we cannot find a valid fibering map. In contrast to \cite{WZZ}, where the functional is bounded from below, we show that the functional $I$ satisfies the mountain pass geometry. On the other hand, due to the appearance of logarithmic convolution, it is unknown whether an arbitrary Palais-Smale sequence is bounded in $E$. In order to overcome this difficulty, we follow an idea from \cite{J} and introduce an auxiliary functional $\Phi$ given by \eqref{a}. By applying a general minimax principle to the auxiliary functional $\Phi$, we obtain a Cerami sequence for $I$ with a key additional property, which plays an important role in establishing its boundedness. Then, we prove the existence of a nontrivial solution to equation \eqref{1-4}. Finally, by minimizing the functional $I$ over the set of all nontrivial solutions, we succeed in showing that equation \eqref{1-4} admits a ground state solution. In the course of the proof, the scaling parameter $t$ in the term $V(tx)$ cannot be directly extracted, necessitating a series of precise and careful computations.
    
  %  \noindent \textbf{Remark 1.1}: In the proof of Theorem \ref{theorem1.1},  we utilize preliminary definitions and lemmas in \cite{CW}, and it is inspired by the arguments in \cite{HIT,J,DW}. Since we investigated the planar Schr\"{o}dinger-Poisson  equation involving potential and logarithmic nonlinearity, which requires suitable assumptions on $V$ to  construct   Cerami sequence $\{u_n \}$. Moreover, the convergence $K(u_n)\rightarrow0$ as $n\rightarrow\infty$ is essential to establish the boundedness of $\{u_n \}$. Once the boundedness of the sequence is established, we combine $K(u_n)\rightarrow0$ with $E\hookrightarrow\hookrightarrow L^q(\mathbb{R}^2)$ for $2\leq q <2^*$ to rule out the trivial solution $u=0$. With the mountain-pass structure obtained, ${u_n}$ converges to a nontrivial solution $u$ to the problem \eqref{1-4}.

%One of the difficulties of this paper lies in proving that $I$ is well-defined in $E$ and $I \in C^1(E, \mathbb{R})$, since $u\ln u^2$ is sublinear near $0$ (see Sect. \ref{s2}).

%In the following, we  give the result of  the existence of a ground state solution for  problem \eqref{1-4} with $\lambda<0$.
\vspace{.1cm}
A natural question is whether equation \eqref{1-4} still has a ground state solution if $\lambda<0$. We proceed along the lines of the proof of Theorem \ref{theorem1.1}. However, the function $u_\theta$ defined in Lemma \ref{lemma2.7} and the auxiliary functional $\Phi$ defined by \eqref{a} are no longer applicable to the case $\lambda<0$. To overcome this difficulty, we construct a new function by \eqref{4.1} and a new auxiliary functional by \eqref{4.6}. Our result is as follows.
%Our next result is on this aspect.
		\begin{theorem}\label{theorem1.3}
		Assume that $V$ satisfies $(V_1)$-$(V_2)$ and $\lambda<0$. Then, equation \eqref{1-4} has a ground state solution.
	\end{theorem}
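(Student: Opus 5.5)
The plan is to follow the scheme of the proof of Theorem \ref{theorem1.1}: mountain pass geometry, then a Cerami sequence carrying an extra Pohozaev-type identity (via an auxiliary functional in the spirit of \cite{J}), then compactness from the coercivity of $V$, and finally minimization over the set of nontrivial solutions. The only places where the sign of $\lambda$ enters are the path used for the mountain pass geometry and the auxiliary functional. These are the two objects I would rebuild.

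\emph{Step 1: mountain pass geometry.} For small $\|u\|_E$, the term $-\frac12\int u^2\ln u^2$ is controlled from below as in Sect.~\ref{s2}: its positive part (where $u^2>1$) is bounded by $C\int |u|^{q}$ with $q>2$, and its negative part only helps. For the convolution term I would write $\ln|x-y|=\ln(1+|x-y|)-\ln(1+\frac1{|x-y|})$. Then $\frac{\lambda}{4}\int\!\!\int\ln(1+|x-y|)u^2u^2$ is a nonpositive contribution bounded by $C\|u\|_E^4$, using $\ln(1+|x-y|)\le \ln(1+|x|)+\ln(1+|y|)$ and the embedding $E\subset X$. The singular part is a nonnegative contribution, and its size is $O(\|u\|_{8/3}^4)$. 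Hence $I(u)\ge \frac14\|u\|_E^2-C\|u\|_E^q-C\|u\|_E^4>0$ on a small sphere.

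To get a point with negative energy when $\lambda<0$, I would replace the function $u_\theta$ of Lemma \ref{lemma2.7} by an amplitude scaling $s\mapsto s\,u$ with $u$ fixed and compactly supported (this is the new function \eqref{4.1}). Since
\[
I(su)=\frac{s^2}{2}\Bigl(\|u\|_E^2+\|u\|_2^2-\int u^2\ln u^2\Bigr)-\frac{s^2\ln s^2}{2}\|u\|_2^2+\frac{\lambda s^4}{4}\int\!\!\int \ln|x-y|u^2u^2 ,
\]
I would choose $u$ supported in a ball of radius $<1/2$, so that $\ln|x-y|<0$ on the support. The last term is then $-c\,s^4$ with $c>0$, which gives $I(su)\to-\infty$ as $s\to\infty$.

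\emph{Step 2: Cerami sequence with a Pohozaev property.} I would define the new auxiliary functional \eqref{4.6} as
\[
\Phi(\theta,u):=I\bigl(e^{\theta}u(e^{\theta}\cdot)\bigr),\qquad (\theta,u)\in\mathbb{R}\times E .
\]
This is the $L^2$-preserving dilation. Under it the gradient term scales by $e^{2\theta}$, the logarithmic term produces $\theta\|u\|_2^2$, the convolution produces $-\frac{\lambda\theta}{4}\|u\|_2^4$, and the potential becomes $\int V(e^{-\theta}x)u^2$. Differentiating the last quantity in $\theta$ uses $(V_2)$ and stays controlled by $\kappa\int Vu^2$.

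Applying the general minimax principle to $\Phi$ on the class of paths $\tilde\gamma=(0,\gamma)$ gives a sequence with
\[
I(u_n)\to c>0,\qquad (1+\|u_n\|_E)\|I'(u_n)\|\to0,\qquad K(u_n)\to0,
\]
where $K(u)=\partial_\theta\Phi(0,u)$ is the Pohozaev-type functional
\[
K(u)=\|\nabla u\|_2^2-\frac12\int (\nabla V\cdot x)u^2-\|u\|_2^2-\frac{\lambda}{4}\|u\|_2^4 .
\]

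\emph{Step 3: boundedness, the main obstacle.} This is where the sign of $\lambda$ bites: with $\lambda<0$ the convolution term in $I$ can be very negative. I would combine three relations. First, $I(u_n)-\frac12\langle I'(u_n),u_n\rangle=\frac12\|u_n\|_2^2+\frac{\lambda}{4}B(u_n)$, where $B(u)=\int\!\!\int\ln|x-y|\,u^2u^2$; the convolution term enters this identity with a factor $-\frac14$, not $+\frac14$. Second, $K(u_n)\to0$. Third, the decomposition of $B$ into a part bounded by $\|u\|_{8/3}^4$ and a part bounded by $\|u\|_2^2\,\|u\|_X^2$. From these I would first bound $\|u_n\|_2$, then $\|\nabla u_n\|_2$ via $K$, and then the $V$-part using the $\langle I'(u_n),u_n\rangle$ identity together with the subquadratic-at-infinity behaviour of $\ln u^2$. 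The term $\int V u^2$ cannot be extracted by scaling, which is why $(V_2)$ is needed to relate $\int(\nabla V\cdot x)u^2$ back to $\int Vu^2$. I expect this chain of estimates to need the most care.

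\emph{Step 4: convergence and ground state.} Once $\{u_n\}$ is bounded, coercivity of $V$ gives the compact embedding $E\hookrightarrow L^q(\mathbb{R}^2)$ for $q\ge2$. The superlinear part of the logarithm converges strongly, the negative part is handled by Fatou's lemma, and the convolution terms converge by the decomposition above. So $u_n\to u$ in $E$, and $I(u)=c>0$ forces $u\ne0$, which gives a nontrivial solution.

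For the ground state, let $m=\inf\{I(w): w\ne0,\ I'(w)=0\}$. Every solution satisfies $\langle I'(w),w\rangle=0$ and $K(w)=0$; the latter is the Pohozaev identity, which I would justify by the same dilation argument applied to $\Phi(\cdot,w)$. The estimates of Step 3 then show that $m>-\infty$, that a minimizing sequence of solutions is bounded, and, via Step 1 applied to solutions, that solutions are bounded away from $0$ in $E$. Compactness as in the convergence argument above then gives a nontrivial limit solution attaining $m$, which is the ground state claimed in Theorem \ref{theorem1.3}.
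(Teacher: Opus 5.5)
There are two genuine gaps, and a smaller third problem.

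\textbf{Step 1: the negative-energy point has the wrong sign.} If $u$ is supported in a ball of radius $<\frac12$, then $B(u):=\int\!\!\int\ln|x-y|\,u^2(x)u^2(y)\,dx\,dy<0$. Because $\lambda<0$, the quartic term $\frac{\lambda s^4}{4}B(u)=\frac{|\lambda|\,|B(u)|}{4}s^4$ is \emph{positive} and dominates $-\frac{s^2\ln s^2}{2}\|u\|_2^2$. So $I(su)\to+\infty$; that choice is the one that works for $\lambda>0$. For $\lambda<0$ you need $B(u)>0$, i.e.\ a spread-out profile. Since $B(u(\cdot/t))=t^4\bigl(B(u)+\ln t\,\|u\|_2^4\bigr)$, any dilate $u(\cdot/t)$ with $t$ large will do. This is exactly what the paper's $u_t=t^ru(\cdot/t)$ in Lemma \ref{lemma4.2} exploits, through the term $\frac{\lambda}{4}t^{4r+4}\ln t\,\|u\|_2^4\to-\infty$.

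\textbf{Steps 2--4.} Your Step 2 is fine and is in fact equivalent to the paper's. With $T$ as in Lemma \ref{lemma4.4}, one checks $T(u)=(r+1)\langle I'(u),u\rangle-K(u)$ for your $K$, so along a Cerami sequence $K(u_n)\to0$ if and only if $T(u_n)\to0$. The real gap is Step 3, which carries the proof but is not actually argued, and the tool you name cannot work. The identity $I-\frac12\langle I',u\rangle$ keeps $B$. The bounds from the decomposition, $\|u\|_{8/3}^4\le C\|u\|_2^3\|\nabla u\|_2$ and $C\|u\|_2^2\|u\|_X^2$, are quartic errors that none of the available coercive terms absorbs. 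The missing idea is to cancel $B$ exactly and use $K$ only for its $\|u\|_2^4$ term, whose sign is favourable precisely because $\lambda<0$:
\begin{align*}
I(u)-\tfrac14\langle I'(u),u\rangle+\mu K(u)=\;&\bigl(\tfrac14+\mu\bigr)\|\nabla u\|_2^2+\bigl(\tfrac12-\mu\bigr)\|u\|_2^2+\tfrac14\int_{\mathbb{R}^2}V(x)u^2dx\\
&-\tfrac{\mu}{2}\int_{\mathbb{R}^2}(\nabla V\cdot x)u^2dx+\tfrac{\mu|\lambda|}{4}\|u\|_2^4-\tfrac14\int_{\mathbb{R}^2}u^2\ln u^2dx .
\end{align*}
Take $0<\mu<\min\{\frac12,\frac{1}{2\kappa}\}$. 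Then $(V_2)$ leaves $(\frac14-\frac{\mu\kappa}{2})\int Vu^2\ge0$. Also $\int u^2\ln u^2\le C\|u\|_q^q\le C\|u\|_2^2\|\nabla u\|_2^{q-2}$ with $2<q<3$, which Young's inequality absorbs into $\|\nabla u\|_2^2$ and $\|u\|_2^4$. Since the left side equals $c+o_n(1)$, boundedness follows. This is the paper's combination $I-\frac{1}{4(r+1)}T$ from Lemma \ref{lemma4.5}, with $\mu=\frac{1}{4(r+1)}$; a small $\mu<0$ handles $\lambda>0$ with the same $K$.

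\textbf{Pohozaev identity in Step 4.} $K(w)=0$ for a solution does not follow from ``the same dilation argument''. The curve $\theta\mapsto e^{\theta}w(e^{\theta}\cdot)$ need not be differentiable into $E$, since its derivative $w+x\cdot\nabla w$ need not lie in $E$. So $I'(w)=0$ does not give $\partial_\theta\Phi(0,w)=0$. You need the Pohozaev identity proper, obtained from the regularity in Lemma \ref{lemma2.13} and a cut-off argument.
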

%We prove Theorem \ref{theorem1.3} along the lines of the proof of Theorem \ref{theorem1.1}. Since $\lambda<0$, the auxiliary functional $\Phi$ defined by \eqref{a} is no longer applicable. To overcome this difficulty, we construct another auxiliary functional, which is given by \eqref{4.6}.

    \vspace{.1cm}
	Our next result concerns the existence of sign-changing solutions to equation \eqref{1-4}. To this end, we introduce some notation.
 Let $G$ be a closed subgroup of the orthogonal group $O(2)$, and let $\varrho: G\rightarrow \{-1,1\}$ be a group homomorphism. Then, the pair $(G,\varrho)$ induces a group action of  $G$ on \( E \),  defined as follows:
	 \begin{align*}%\label{q1}
	 	[A\diamond u](x)=\varrho (A)u(A^{-1}x)\quad \mathrm{for}\ \ A\in G,\  u\in E \ \mathrm{and}\ x\in \mathbb{R}^2.
	 \end{align*}
	The subsequent result concerns solutions of equation \eqref{1-4} in the invariant space
	\begin{align*}%\label{q2}
		E_G:=\{u\in E\ |\ A\diamond u=u \quad \mathrm{for}\ \mathrm{all} \ \ A\in G\}.
	\end{align*}
%With this invariant space in hand, we proceed to define the critical value
%$$c_{G}=\inf_{\gamma\in \Gamma_G}\max_{s\in [0,1]}I(\gamma(s))\quad \mathrm{with}\ \Gamma_G:=\{\gamma\in \left( C[0,1], E_G\right), \gamma(0)=0, I(\gamma(1))<0 \}.$$

	\begin{theorem}\label{theorem1.2}
	Let $G, \varrho$ be as above. Suppose that $\lambda\neq 0$, $E_G\neq \{0\}$, and \(V: \mathbb{R}^2 \to \mathbb{R}\) satisfies $(V_1)$-$(V_2)$ and $V(Ax)=V(x)$ for any $A\in G$. Then,
	equation \eqref{1-4} has a G-invariant ground state solution, that is, a solution  $u\in E_{G}\backslash\{0\}$  satisfying $I(u)=c_{*,G}$, where 
	$$c_{*,G}:=\inf\{I(w)\ |\ w\in E_G\backslash\{0\} \ \mathrm{solves}\ \eqref{1-4} \}.$$
	\end{theorem}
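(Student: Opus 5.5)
The plan is to redo the proofs of Theorems \ref{theorem1.1} and \ref{theorem1.3} inside the closed subspace $E_G$, and then to pass from critical points of $I|_{E_G}$ to genuine solutions by the principle of symmetric criticality.

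We begin with the invariance. Since $V(Ax)=V(x)$ and $|\det A|=1$, each $A\in G$ acts on $E$ as a linear isometry. The terms $\int u^2\ln u^2\,dx$ and $\int\int \ln|x-y|\,u^2(x)u^2(y)\,dx\,dy$ are also unchanged by $u\mapsto A\diamond u$. This uses $|Ax-Ay|=|x-y|$ and $\varrho(A)^2=1$. Hence $I(A\diamond u)=I(u)$. Palais' principle of symmetric criticality then says that any critical point of $I|_{E_G}$ is a critical point of $I$ on $E$. Concretely, $I'(u)\in E^*$ is $G$-equivariant, so its Riesz representative lies in $E_G$. Therefore a representative orthogonal to $E_G$ must vanish.

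Next we run the existence argument in $E_G$, taking $\lambda>0$ and $\lambda<0$ separately. The mountain pass geometry of $I$ near $0$ carries over unchanged, because it only uses norm estimates. The one thing to check is that the path endpoint with negative energy can be chosen in $E_G$. The natural approach is to fix any $w\in E_G\setminus\{0\}$, which exists because $E_G\neq\{0\}$ is assumed. We then apply to $w$ the same scaling family as before: the $u_\theta$ of Lemma \ref{lemma2.7} for $\lambda>0$, and the function from \eqref{4.1} for $\lambda<0$.

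These constructions use dilations $x\mapsto tx$ and amplitude multiplications. Both commute with the linear action of $G\subset O(2)$, so they preserve $E_G$. The auxiliary functionals $\Phi$ from \eqref{a} and \eqref{4.6} are defined on $\mathbb{R}\times E$ through these same dilations. Restricted to $\mathbb{R}\times E_G$, the general minimax principle therefore yields a Cerami sequence $\{u_n\}\subset E_G$ for $I|_{E_G}$. This sequence keeps the additional Pohozaev-type property that gives boundedness. Since $E_G$ is closed, boundedness, the compact embedding $E\hookrightarrow L^q$ coming from coercivity of $V$, and the exclusion of the zero limit all go through verbatim. This gives a nontrivial critical point in $E_G$, so the set of $G$-invariant solutions is nonempty and $c_{*,G}$ is well defined.

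Finally we show that the infimum $c_{*,G}$ is attained. We first show $c_{*,G}>-\infty$. On solutions, $I(u)=I(u)-\frac12\langle I'(u),u\rangle$, and combining this with the Pohozaev identity for \eqref{1-4} (valid by $(V_2)$) gives a lower bound, exactly as in the ground state step of the earlier theorems.

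Then we take a minimizing sequence of solutions $u_n\in E_G$. These satisfy $I'(u_n)=0$ and the Pohozaev identity, so they are bounded by the same argument as for Cerami sequences. Extract a weak limit $u$, which lies in $E_G$ by weak closedness. Compactness gives strong convergence and $I(u)=c_{*,G}$.

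\textbf{Main obstacle.} The step I expect to be hardest is ruling out $u=0$, that is, showing solutions stay uniformly away from zero. For this I would use $\langle I'(u_n),u_n\rangle=0$ together with the lower bound on $\|u\|_E$ for nontrivial solutions obtained in the proofs of Theorems \ref{theorem1.1} and \ref{theorem1.3}. Those bounds hold for arbitrary solutions, so they apply in $E_G$ without change.
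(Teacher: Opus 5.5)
Your proposal is correct and follows essentially the paper's route. The mountain pass geometry and the auxiliary functional restricted to $\mathbb{R}\times E_G$ yield a Cerami sequence in $E_G$ with $K(u_n)\to0$ (resp.\ $T(u_n)\to0$), and compactness plus symmetric criticality give a nontrivial $G$-invariant solution. A minimizing sequence of $G$-invariant solutions is then handled via $K(u_n)=0$, the bound $\|u_n\|_E\ge\alpha$ from Lemma~\ref{lemma2.6}, and Lemmas~\ref{lemma2.10}--\ref{lemma2.11}.

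The one deviation is your separate a priori bound $c_{*,G}>-\infty$. The paper does not need it, since it follows from $c_{*,G}=\lim I(u_n)=I(u)$. If you want it anyway, derive it from $I-\frac{1}{4(a-1)}K$ rather than from $I-\frac12\langle I'(u),u\rangle$ plus a multiple of the Pohozaev identity: the latter combination has no $\|\nabla u\|_2^2$ term to absorb $\int_{\mathbb{R}^2}u^2\ln u^2\,dx$.
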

    It is worth noticing that if $\varrho$ is nontrivial and $A\in G$ is given with $\varrho (A)=-1$, then every $u\in E_{G}$ satisfies $u(A^{-1}x)=-u(x)$. So, $u$ vanishes on the set $\{x\in \mathbb{R}^2\ |\ Ax=x\}$ and changes sign in $\mathbb{R}^2$ if $u\neq 0$.

\vspace{.1cm}
    Next, we introduce an example for $G$ and $\varrho$.
    
\noindent\textbf {Example 1.1.}  Fix $k\in\mathbb{N}$. Consider the subgroup $G$ of  $O(2)$ of order $2k$ generated by the (counter-clockwise)  $\frac{\pi}{k}$-rotation  
$$f\in O(2),\quad   f:=\begin{pmatrix}
\cos \frac{\pi}{k} & -\sin \frac{\pi}{k} \\
 \sin \frac{\pi}{k} & \cos \frac{\pi}{k}
\end{pmatrix},$$
then $$f x=\left(x_1\cos \frac{\pi}{k}-x_2 \sin \frac{\pi}{k}, x_1 \sin \frac{\pi}{k}+x_2\cos \frac{\pi}{k}\right), \quad \mathrm{for}\ \ x=(x_1,x_2)\in \mathbb{R}^2.$$
Let $\varrho:G\to\{-1,1\}$ be the group homomorphism defined by
\[
\varrho(f^i)=(-1)^i,\qquad i=1,2,\dots,2k,
\]
where $f^i$ is the $\frac{i\pi}{k}$-rotation. If \(V: \mathbb{R}^2 \to \mathbb{R}\) satisfies $(V_1)$-$(V_2)$ and $V(fx)=V(x)$, then it follows from Theorem \ref{theorem1.2} that equation \eqref{1-4} possesses a $G$-invariant solution, which is a sign-changing solution.

\vspace{.1cm}
If, in addition, $V$ is radially symmetric, we have the following corollary.
\begin{corollary}\label{corollary1}
	Assume that   $V$ is radially symmetric satisfying $(V_1)$-$(V_2)$ and $\lambda\neq 0$, then equation \eqref{1-4} admits a sequence $\{u_n\}$ of nonradial sign-changing solutions  with $I(u_n)\rightarrow +\infty$ as $n\rightarrow\infty$.
\end{corollary}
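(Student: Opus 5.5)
We will derive the corollary from Theorem \ref{theorem1.2} by applying it to a sequence of dihedral-type subgroups as in Example 1.1. For each $k\in\mathbb{N}$, let $G_k\subset O(2)$ be the cyclic group of order $2k$ generated by the $\frac{\pi}{k}$-rotation $f_k$, and let $\varrho_k(f_k^i)=(-1)^i$. Since $V$ is radial, $V(Ax)=V(x)$ for all $A\in G_k$. Moreover $E_{G_k}\neq\{0\}$: in polar coordinates take $u(r,\theta)=\psi(r)\cos(k\theta)$ with $\psi\in C_c^\infty((1,2))$ nonzero. This $u$ lies in $E$ and satisfies $u(f_k^{-1}x)=\cos(k\theta-\pi)\psi=-u(x)$. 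Theorem \ref{theorem1.2} therefore yields, for every $k$, a solution $u_k\in E_{G_k}\setminus\{0\}$ with $I(u_k)=c_{*,G_k}$.

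Each $u_k$ changes sign, because $u_k(f_kx)=-u_k(x)$. It is also nonradial, since a radial function invariant in this sense would satisfy $u=-u$ and so vanish. It remains to show $I(u_k)\to+\infty$, and then pass to a subsequence with strictly increasing energies (in particular, distinct solutions).

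The main obstacle is the lower bound $c_{*,G_k}\to\infty$. My plan is as follows.

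First, every nontrivial solution $w$ satisfies $\langle I'(w),w\rangle=0$. Combining this with the Pohozaev-type identity used for the ground state (the Cerami information $K(w)=0$ from the earlier sections), I expect to express $I(w)$ through a coercive combination, for $\lambda>0$ roughly $I(w)\ge \frac12\int w^2\,dx$ minus controlled terms. More concretely, the identity $I(w)-\frac12\langle I'(w),w\rangle=\frac12\int w^2dx-\frac{\lambda}{4}\iint\ln|x-y|w^2w^2$ should be combined with the $K$-identity so as to cancel the logarithmic convolution. This gives $I(w)\ge c\|w\|_{L^2}^2$, or a similar bound, for all nontrivial solutions; for $\lambda<0$ the analogue comes from the functional in \eqref{4.6}.

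Second, I will show $\|u_k\|_{L^2}\to\infty$ by a concentration argument. A $G_k$-equivariant function $u$ has $|u|$ invariant under rotations by $\pi/k$. Hence for any point $x_0$ with $|x_0|\ge R$, the mass of $u$ near $x_0$ is repeated on about $2k$ disjoint balls. Meanwhile the coercivity of $V$ and the compact embedding $E\hookrightarrow L^q$ prevent mass from escaping to infinity cheaply. I will argue by contradiction. If $I(u_k)$ were bounded, then $\{u_k\}$ would be bounded in $E$ (by the first step together with the Nehari identity, where the logarithmic term is handled by the logarithmic Sobolev inequality), so a subsequence converges strongly in $L^2$ and $L^q$ to some $u$. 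Since $|u|$ is then invariant under all rotations $f_k$ and $u_k(f_kx)=-u_k(x)$, and any fixed $\varphi$ is nearly invariant under small rotations, we get $u=0$. However, nontrivial solutions are uniformly bounded away from zero in $L^q$: the Nehari identity gives $\|w\|_E^2\le\int w^2\ln w^2+C\ldots$, and the sublinearity of $w\ln w^2$ near zero must be handled by the term $\int w^2\ln w^2\le \int_{|w|>1}$, exactly as in the nontriviality proof of Theorem \ref{theorem1.1}. This lower bound contradicts $u_k\to0$ strongly, so $I(u_k)\to+\infty$.
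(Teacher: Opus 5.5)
Your argument is correct and follows a different route from the paper's in the final step. The paper takes nested groups: $G_n$ is generated by the $\pi/3^n$-rotation, so $E_{G_{n+1}}\subset E_{G_n}$ and the levels $c_{G_n}$ are nondecreasing. Assuming the energies are bounded, it gets boundedness from Lemma \ref{lemma2.10}. It then uses Lemma \ref{lemma2.11} (with $\|u_n\|_E\ge\alpha$) to obtain strong convergence in $E$ to a nontrivial solution $\tilde u$. Nestedness gives $\tilde u\in E_{G_m}$ for every $m$, and continuity of $\tilde u$ (Lemma \ref{lemma2.13}) together with $f_mx\to x$ forces $\tilde u=0$.

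You use the non-nested family of $\pi/k$-rotations and replace exact invariance of the limit by approximate invariance. From $u_k\circ f_k=-u_k$, $u_k\to u$ in $L^2$, and $\|u\circ f_k-u\|_2\to 0$, you get $u=-u$. This needs no regularity of the limit and only the $L^s$-convergence from the compact embedding. It works for any sequence of generators tending to the identity, and you also check $E_{G_k}\neq\{0\}$, which the paper leaves implicit. The price is that $c_{*,G_k}$ need not be monotone in $k$. So the contradiction must be run on a subsequence with bounded energies (not ``if $I(u_k)$ were bounded''), and distinctness needs the subsequence extraction you mention. Your remark that $|u|$ is invariant under every $f_k$ is not immediate without nestedness, and you do not need it; the near-invariance argument is the one that works.

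Two corrections on the supporting steps.

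\emph{Boundedness.} Your first step is unnecessary, and the bound $I(w)\ge c\|w\|_2^2$ does not come out of the identities. $\langle I'(w),w\rangle$ and $P(w)$ contain $\int w^2\ln w^2$ and the logarithmic convolution in the ratio $-1:\lambda$, while $I$ contains them in the ratio $-2:\lambda$. Hence any combination that removes the convolution, such as $I-\frac{1}{4(a-1)}K$, leaves $-\frac14\int w^2\ln w^2$. What you actually need is Lemma \ref{lemma2.10} (respectively Lemma \ref{lemma4.5} for $\lambda<0$). It applies verbatim to solutions, since $I'(u_k)=0$ and $K(u_k)=a\langle I'(u_k),u_k\rangle-P(u_k)=0$ (respectively $T(u_k)=r\langle I'(u_k),u_k\rangle+P(u_k)=0$). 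Controlling the leftover $\int w^2\ln w^2$ by log-Sobolev, or by Gagliardo--Nirenberg plus the $\lambda\|w\|_2^4$ term, is also fine.

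\emph{Lower bound for $\lambda<0$.} The Nehari identity gives $\|w\|_E^2\le C\|w\|_q^q+C\|w\|_2^2\int Vw^2$, because of the term $|\lambda|D_1(w^2,w^2)$. So for $E$-bounded solutions you get a lower bound on $\|w\|_2^2+\|w\|_q^q$, not on $\|w\|_q$ alone. This still contradicts $u_k\to 0$ in $L^2\cap L^q$, so the argument closes.
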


\noindent \textbf{Remark 1.2.}	
We provide several examples of potentials $V$ satisfying $(V_1)$-$(V_2)$.

$(i)$\ \ $V(x) = 1+m|x|^{\gamma}$ 
with $m>0$ and $\gamma>0$.

$(ii)$\ \ $V(x) = 1+m|x|^{\gamma}+\frac{1}{2}\sin x_1$ with $m>0$ and $\gamma>0$.
%$$V(x) = m\bigl(1+|x|^2\bigr)^{\frac{\gamma}{2}} + \frac{\varepsilon x_1}{\sqrt{1+|x|^2}},$$
%where $x=(x_1,x_2)\in\mathbb{R}^2$, $m>0$, $x_1>0$ and $0<\varepsilon<m$.
%\\ Note that potentials $V$ includes both radially symmetric potentials and general coercive potentials.

%\noindent \textbf{Remark 1.2}: We provide several examples of potentials $V$ satisfying $V_1-V_2$.
%\\(i) $V(x)=m(1+|x|^2)^{\frac{\gamma}{2}}$ with $m>0$ and $\gamma>2$.
%\\(ii) $V(x)=m(1+|x|^2)^{\frac{\gamma}{2}}+\frac{\varepsilon x_1}{\sqrt{1+x^2}}$ where $x\in (x_1,x_2)\in \mathbb{R}^2$, $m>0, x_1>0$ and $0<\varepsilon <m$.

    \vspace{.1cm}
	This paper is organized as follows.
In Sect. \ref{s2}, we set up the variational framework and give some useful lemmas. The proofs of Theorems \ref{theorem1.1} and \ref{theorem1.3} are given in Sect. \ref{s3} and Sect. \ref{s4}, respectively. Sect. \ref{s5} focuses on the proofs of Theorem \ref{theorem1.2} and Corollary \ref{corollary1}.
%first, we establish the existence and boundedness of Cerami sequences. Then, by the mountain pass theorem and Pohozaev identity, we give the results of  ground state solutions with $\lambda>0$. In Sect. 4, we derive the same conclusions as those obtained in Section 3 with $\lambda<0$. 

\vspace{.1cm}
Throughout the paper, we use the following notation for convenience.

$\bullet$ $\|u\|_{q}:=(\int_{\mathbb R^{2}}|u|^{q}dx)^{\frac{1}{q}}$ for $ 1\leq q<\infty$ and $u\in L^{q}(\mathbb R^{2})$.

$\bullet$ $ C, C_{i}$ denote various positive constants which may differ.

	\section{Preliminaries}\label{s2}
In this section, we set up the variational framework and give some useful lemmas.
First, we give some embedding results. 
			\begin{lemma}\label{lemma1}
				 Assume that $V$ satisfies $(V_1)$, then \\
                 (i) $E\hookrightarrow X$, where $X$ is given by \eqref{X}.\\
                 (ii) $E\hookrightarrow\hookrightarrow L^q(\mathbb{R}^2)$ for all $q\geq 2$.\\
                 (iii) $E\hookrightarrow L^{2-\varepsilon}(\mathbb{R}^2)$ for all $\varepsilon\in \left(0,\frac{2\gamma}{\gamma+2}\right)$.
		\end{lemma}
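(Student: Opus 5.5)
From $(V_1)$ we first extract the basic growth information on $V$. Since $V>0$ is continuous and $V(x)/|x|^\gamma\to m\in(0,+\infty]$, there are $R_0>0$ and $c_0>0$ with $V(x)\ge c_0|x|^\gamma$ for $|x|\ge R_0$. On the compact ball $\overline{B_{R_0}}$, $V$ attains a positive minimum. Hence $V(x)\ge c_1(1+|x|^\gamma)$ on all of $\mathbb{R}^2$ for some $c_1>0$. In particular $V\ge c_1$, so $\|u\|_{H^1}\le C\|u\|_E$ and $E\hookrightarrow H^1(\mathbb{R}^2)$.

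For (i), note that $\ln(1+|x|)\le C_\gamma(1+|x|^\gamma)\le (C_\gamma/c_1)V(x)$, because $\ln(1+r)/(1+r^\gamma)$ is bounded on $[0,\infty)$. Therefore
\[
\int_{\mathbb{R}^2}\ln(1+|x|)u^2dx\le C\int_{\mathbb{R}^2}V(x)u^2dx .
\]
This gives $\|u\|_X^2\le C\|u\|_E^2$, with the weight $V$ already present in $\|\cdot\|_X$.

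For (ii), continuity follows from $E\hookrightarrow H^1(\mathbb{R}^2)\hookrightarrow L^q(\mathbb{R}^2)$ for all $q\in[2,\infty)$. For compactness, take $u_n\rightharpoonup 0$ in $E$. First treat $q=2$. Given $\varepsilon>0$, choose $R$ so large that $V(x)\ge 1/\varepsilon$ for $|x|\ge R$. Then
\[
\int_{|x|\ge R}u_n^2dx\le \varepsilon\int V u_n^2dx\le \varepsilon\sup_n\|u_n\|_E^2 .
\]
On the ball $B_R$ we have $u_n\to0$ in $L^2(B_R)$ by Rellich--Kondrachov. So $u_n\to0$ in $L^2(\mathbb{R}^2)$. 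For $q>2$ we use Gagliardo--Nirenberg in dimension two, $\|u\|_q\le C\|u\|_2^{2/q}\|\nabla u\|_2^{1-2/q}$. Since $\{u_n\}$ is bounded in $H^1$, this yields $u_n\to 0$ in $L^q(\mathbb{R}^2)$.

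For (iii), set $p=2-\varepsilon$ and split $\int|u|^p$ over $B_1$ and its complement. On $B_1$, Hölder gives $\int_{B_1}|u|^p\le C\|u\|_2^p$. Outside $B_1$, we apply Hölder with exponents $2/p$ and $2/(2-p)$:
\[
\int_{|x|\ge1}|u|^p=\int |x|^{\gamma p/2}|u|^p\,|x|^{-\gamma p/2}\le\Big(\int |x|^\gamma u^2\Big)^{p/2}\Big(\int_{|x|\ge1}|x|^{-\frac{\gamma p}{2-p}}dx\Big)^{\frac{2-p}{2}}.
\]
The last integral is finite exactly when $\gamma p/(2-p)>2$, i.e.\ $\gamma(2-\varepsilon)>2\varepsilon$, i.e.\ $\varepsilon<2\gamma/(\gamma+2)$, which is the stated range. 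Combined with $|x|^\gamma\le CV(x)$ for $|x|\ge1$, this gives $\|u\|_{2-\varepsilon}\le C\|u\|_E$.

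The main point to check carefully is this exponent computation in (iii). The only other delicate step is the lower bound $V\ge c_1(1+|x|^\gamma)$; it needs the positivity of $V$ on compacts, which holds since $V$ takes values in $\mathbb{R}^+$. The case $m=+\infty$ is handled the same way, since $V/|x|^\gamma$ is then eventually $\ge1$.
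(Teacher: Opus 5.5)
Your proof is correct and matches the paper's for (i) and (iii). In (iii) the paper applies H\"older with the global weight $(1+|x|^\gamma)^{(2-\varepsilon)/2}$ instead of splitting at $|x|=1$, and it arrives at the same integrability condition $\gamma(2-\varepsilon)/\varepsilon>2$. For (ii) the routes differ slightly: the paper just combines (i) with the compact embedding $X\hookrightarrow\hookrightarrow L^q(\mathbb{R}^2)$ from Cingolani--Weth \cite{CW}, while you reprove that compactness directly from the coercivity of $V$ (tail smallness, Rellich on balls, Gagliardo--Nirenberg), which is self-contained and equally valid.
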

		\noindent   {\it Proof.} 
		(i) By $(V_1)$, we have $V_0:=\inf\limits_{x\in \mathbb{R}^2} V(x)>0$ and 
        \begin{align}\label{2.1}
           V_0 \int_{\mathbb{R}^2}u^2 dx
           \leq \int_{\mathbb{R}^2}V(x)u^2 dx\quad \mathrm{for\ all}\quad  u\in E.
        \end{align}
        Note that 
        \begin{align*}
        \lim_{|x| \to \infty } \frac{\ln(1+|x|)}{V(x)}
           = \mathop {\lim }\limits_{|x| \to \infty } \frac{\ln(1+|x|)}{|x|^\gamma}\cdot \lim_{|x| \to \infty } \frac{|x|^\gamma}{V(x)}=0,
        \end{align*}
        which implies that there exists $R_1 > 0$ such that 
        \begin{align*}
           \ln(1+|x|) \le V(x) \quad \mathrm{for}\quad  |x| \ge R_1.
        \end{align*}
        Denote $C_1:=\mathop {\max }\limits_{|x| \le R_1 }\frac{\ln(1+|x|)}{V(x)}$. Then, 
        \begin{align}\label{2.1.1}
            \ln(1+|x|)\leq \max\{C_1,1\}V(x)\quad \mathrm{for}\quad  x\in \mathbb{R}^2,
        \end{align}
        and moreover 
        \begin{align}\label{2.2}
            \int_{\mathbb{R}^2}\ln(1+|x|)u^2 dx
           \leq \max\{C_1,1\}\int_{\mathbb{R}^2}V(x)u^2 dx
           \quad \mathrm{for\ all}\quad  u\in E.
        \end{align}
        Combining \eqref{2.1} and \eqref{2.2}, we obtain that there exists $C> 0$ such that $\| u \|_X\leq C\| u \|_E$ for all $u\in E$, which means $E\hookrightarrow X$.
        
       (ii) % Due to $\inf_{x \in \mathbb{R}^2} V(x) > 0$, we have $E\hookrightarrow L^2(\mathbb{R}^2)$. 
       By (i) and \cite[Lemma 2.2]{CW}, we derive $E \hookrightarrow \hookrightarrow L^q(\mathbb{R}^2)$ for all $q \ge 2$.
        
		(iii) According to $(V_1)$, there exist $R_2 > 0$ and $C_0 > 0$ such that 
	\begin{align}\label{01}
		V(x) \ge C_0 |x|^\gamma \quad \mathrm{for}\quad  |x| \ge R_2.
	\end{align}
		Then, for $u \in E$, it follows from \eqref{01} that
		\begin{align}\label{02}
			\int_{\mathbb{R}^2} (1+|x|^\gamma) u^2 \, dx \notag
            &= \int_{|x| \le R_2} (1+|x|^\gamma) u^2 \, dx + \int_{|x| > R_2} (1+|x|^\gamma) u^2 \, dx \\
		\notag	
        %&\le C \int_{|x| \le R_2} u^2 \, dx + \int_{|x| > R_2} (1+|x|^\gamma) u^2 \, dx\\\notag
		&\le C \int_{|x| \le R_2} u^2\, dx+\int_{|x| > R_2} u^2\, dx+\frac{1}{C_0}\int_{|x| > R_2} V(x)u^2dx
		 \\\notag
			&\le C \int_{\mathbb{R}^2} u^2 \, dx + \frac{1}{C_0} \int_{\mathbb{R}^2} V(x) u^2 \, dx \\
			&\le C \| u \|_E^2,
		\end{align}
where $C>0$ is independent of $u$.	
		 For $\varepsilon\in \left( 0,\frac{2\gamma}{\gamma+2}\right) $, let
		$$\eta:=\frac{2-\varepsilon}{2},\quad p:=\frac{2}{2-\varepsilon}\quad \mathrm{and}\quad  \dfrac{1}{p}+\dfrac{1}{p'}=1.$$ For any $u\in E$, using the H\"older inequality and \eqref{02}, we obtain
		\begin{align}
			\int_{\mathbb{R}^2}|u|^{2-\varepsilon}dx\notag&=\int_{\mathbb{R}^2}\dfrac{1}{\left(1+|x|^\gamma\right)^\eta }\left(1+|x|^\gamma \right)^\eta|u|^{2-\varepsilon}dx \\\notag
            &\leq\left( \int_{\mathbb{R}^2}\dfrac{1}{\left( 1+|x|^\gamma\right)^{\eta p'} }dx\right )^{\frac{1}{p'}}\left(\int_{\mathbb{R}^2} \left[(1+|x|^\gamma)^\eta|u|^{2-\varepsilon} \right]^pdx \right)^{\frac{1}{p}}\\\notag
            &\leq C%\left( \int_{\mathbb{R}^2}\dfrac{1}{\left( 1+|x|^\gamma\right)^{\eta p'} }dx\right )^{\frac{1}{p'}}
            \left( \int_{\mathbb{R}^2} (1+|x|^\gamma)|u|^2dx\right) ^\frac{1}{p}\\\notag
           % & \leq C\left(\int_{\mathbb{R}^2} \dfrac{1}{(1+|x|^\gamma)^{\eta p^{'}}} \right)^{\frac{1}{p^{'}}}||u||_E^{2-\varepsilon}\\\notag
            &\leq C||u||_E^{2-\varepsilon},
		\end{align}
where we used $\gamma\eta p^{'}=\dfrac{\left( 2-\varepsilon\right)\gamma }{\varepsilon}>2$. Hence, $E\hookrightarrow L^{2-\varepsilon}(\mathbb{R}^2)$ for $\varepsilon\in \left(0,\frac{2\gamma}{\gamma+2}\right)$.
	\hfill$\square$			

\vspace{.2cm}
In order to treat the logarithmic convolution variationally, we define the following symmetric bilinear forms as in \cite{CW}:
	\begin{align*}
	D_1(u,v):=&\int_{\mathbb{R}^2}\int_{\mathbb{R}^2}\ln(1+|x-y|)u(x)v(y)dxdy,\\
    D_2(u,v):=&\int_{\mathbb{R}^2}\int_{\mathbb{R}^2}\ln  \left(1+\dfrac{1}{|x-y|} \right)u(x) v(y)dxdy.
	\end{align*} 
%	and 
%	\begin{align}
%	\notag	D_0(u,v):=\int_{\mathbb{R}^2}\int_{\mathbb{R}^2}\ln |x-y|u(x)v(y)dxdy.
%	\end{align}
For $u,v,w,z\in E$, we deduce from \cite[(2.4)]{CW} and \eqref{2.2} that 
		\begin{align}\label{w1}
|D_1(uv,wz)|\notag=&\left|\int_{\mathbb{R}^2}\int_{\mathbb{R}^2}\ln(1+|x-y|)u(x)v(x)w(y)z(y)dxdy\right|\\
\notag\leq &\int_{\mathbb{R}^2}\int_{\mathbb{R}^2}\left[\ln(1+|x|)+\ln(1+|y|)\right]|u(x)v(x)w(y)z(y)|dxdy\\
%&\leq\int_{\mathbb{R}^2}\int_{\mathbb{R}^2}\ln (1+|x|)u(x)v(x)w(y)z(y)dxdy+\int_{\mathbb{R}^2}\int_{\mathbb{R}^2}\ln (1+|y|)u(x)v(x)w(y)z(y)dxdy\\\notag
\notag\leq &||u||_2||v||_2\left(\int_{\mathbb{R}^2}\ln (1+|x|)w^2dx\right)^{\frac{1}{2}}\left(\int_{\mathbb{R}^2}\ln (1+|x|)z^2dx\right)^{\frac{1}{2}} \\\notag
&+||w||_2||z||_2\left(\int_{\mathbb{R}^2}\ln (1+|x|)u^2dx\right)^{\frac{1}{2}}\left(\int_{\mathbb{R}^2}\ln (1+|x|)v^2dx\right)^{\frac{1}{2}}\\\notag
\leq &C||u||_2||v||_2\left(\int_{\mathbb{R}^2} V(x)w^2(x)dx\right)^{\frac{1}{2}}\left(\int_{\mathbb{R}^2} V(x)z^2(x)dx\right)^{\frac{1}{2}}
\\
&+C||w||_2||z||_2\left(\int_{\mathbb{R}^2} V(x)u^2(x)dx\right)^{\frac{1}{2}}\left(\int_{\mathbb{R}^2} V(x)v^2(x)dx\right)^{\frac{1}{2}}.
	\end{align}
%    For $u,v\in L^{\frac{4}{3}}(\mathbb{R}^2)$, it follows from \cite[(2.1)]{CW} that 
Since $0<\ln (1+r)<r$ for $r>0$, it follows from the Hardy-Littlewood-Sobolev inequality \cite{25} that there exists a constant $C>0$ such that 
	\begin{align}\label{1-7}
|D_2(u,v)|
%\notag&\leq\int_{\mathbb{R}^2}\int_{\mathbb{R}^2}\ln  \left(1+\dfrac{1}{|x-y|} \right)|u(x) v(y)|dxdy\\\notag&
\leq\int_{\mathbb{R}^2}\int_{\mathbb{R}^2}\frac{1}{|x-y|}|u(x)v(y)|dxdy
\leq C||u||_{\frac{4}{3}}||v||_{\frac{4}{3}}\ \ \text{for}\ \ u,v\in L^{\frac{4}{3}}(\mathbb{R}^2).
\end{align}

Thus, we have the following lemma.
\begin{lemma}
    There exists $C>0$ such that for all $u\in E$, 
    \begin{align}
        \int_{\mathbb{R}^2}\int_{\mathbb{R}^2}\ln(1+|x-y|)u^2(x)u^2(y)dxdy
        \leq &C||u||_2^2\int_{\mathbb{R}^2} V(x)u^2(x)dx,\label{2.8.0}\\ 
        \int_{\mathbb{R}^2}\int_{\mathbb{R}^2}\ln  \left(1+\dfrac{1}{|x-y|} \right)u^2(x) u^2(y)dxdy
        \leq &C||u||_{\frac{8}{3}}^4.\label{2.8.1}
    \end{align}
\end{lemma}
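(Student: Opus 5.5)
Both inequalities follow by specializing the bilinear estimates \eqref{w1} and \eqref{1-7} to the case where all four functions equal $u$. No new ingredient is needed.

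For \eqref{2.8.0}, we apply \eqref{w1} with $u=v=w=z$. Since $\ln(1+|x-y|)\ge 0$, the left-hand side of \eqref{2.8.0} equals $D_1(u^2,u^2)=|D_1(u^2,u^2)|$. The two terms on the right of \eqref{w1} then coincide, and each is
\[
C\|u\|_2^2\int_{\mathbb{R}^2}V(x)u^2\,dx .
\]
Their sum is therefore $2C\|u\|_2^2\int_{\mathbb{R}^2}V(x)u^2\,dx$, which is \eqref{2.8.0} after renaming the constant.

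It is worth recalling where \eqref{w1} comes from, since that is the only place a justification is needed. It uses the elementary inequality $\ln(1+|x-y|)\le\ln(1+|x|)+\ln(1+|y|)$, which holds because $1+|x-y|\le(1+|x|)(1+|y|)$. It then applies Cauchy--Schwarz in each variable and uses \eqref{2.2} to replace the $\ln(1+|x|)$-weighted $L^2$ norms by $\int_{\mathbb{R}^2} V(x)u^2\,dx$.

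For \eqref{2.8.1}, we take $u^2$ in place of both $u$ and $v$ in \eqref{1-7}. This requires $u^2\in L^{4/3}(\mathbb{R}^2)$, which holds because $u\in E\hookrightarrow L^{8/3}(\mathbb{R}^2)$ by Lemma \ref{lemma1}(ii). We have $\|u^2\|_{4/3}=\big(\int_{\mathbb{R}^2}|u|^{8/3}\,dx\big)^{3/4}=\|u\|_{8/3}^2$. Hence \eqref{1-7} gives
\[
D_2(u^2,u^2)\le C\|u^2\|_{4/3}^2=C\|u\|_{8/3}^4 .
\]
The left-hand side of \eqref{2.8.1} is nonnegative, so no absolute values are needed.

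The argument has no real obstacle. The only points to check are the nonnegativity of both kernels, the membership $u^2\in L^{4/3}(\mathbb{R}^2)$, and the exponent bookkeeping $\|u^2\|_{4/3}=\|u\|_{8/3}^2$.
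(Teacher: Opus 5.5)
Your proposal is correct and is essentially the paper's own argument. The paper states this lemma as an immediate consequence of \eqref{w1} with $u=v=w=z$ (the two right-hand terms coincide) and of \eqref{1-7} applied to $u^2\in L^{4/3}(\mathbb{R}^2)$, using $\|u^2\|_{4/3}=\|u\|_{8/3}^2$.
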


  According to \cite[lemma 2.6]{CW}, we have the following property of the bilinear form \(D_1\).
		
		\begin{lemma}%{\rm(\cite[lemma 2.6]{22})}
        \label{lemma2.3}
					Let $\{u_n\}, \{v_n\} $ and $\{w_n\}$ be bounded sequences in $E$ such that $u_n\rightharpoonup u$ in $E$. Then, for every $z\in E$, we have $D_1(v_nw_n,z(u_n-u))\to 0$ as $n\rightarrow\infty$.
					%\begin{align}
					%\notag	\int_{\mathbb{R}^2}\int_{\mathbb{R}^2}\ln (1+|x-y|)v_n(y)w_n(y)w(x)(u_n(x)-u(x))dxdy\rightarrow0, \quad \mathrm{as}\quad n\rightarrow\infty.
					%\end{align}
		\end{lemma}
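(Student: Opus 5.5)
The plan is to prove Lemma \ref{lemma2.3} by splitting the logarithmic kernel with the subadditivity bound $\ln(1+|x-y|)\le \ln(1+|x|)+\ln(1+|y|)$, which is \cite[(2.4)]{CW}. This gives
\begin{align*}
|D_1(v_nw_n,z(u_n-u))|\le{}& \int_{\mathbb{R}^2}\ln(1+|x|)|v_nw_n|\,dx\int_{\mathbb{R}^2}|z||u_n-u|\,dy\\
&+\int_{\mathbb{R}^2}|v_nw_n|\,dx\int_{\mathbb{R}^2}\ln(1+|y|)|z||u_n-u|\,dy .
\end{align*}
In each of the two products, one factor will be shown to stay bounded and the other to tend to zero.

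\textbf{Bounded factors.} For the first factors we use the Cauchy--Schwarz inequality together with \eqref{2.2} and \eqref{2.1}:
\begin{itemize}
\item $\int\ln(1+|x|)|v_nw_n|\le C\|v_n\|_E\|w_n\|_E$;
\item $\int|v_nw_n|\le \|v_n\|_2\|w_n\|_2\le C\|v_n\|_E\|w_n\|_E$.
\end{itemize}
Both are bounded uniformly in $n$, since $\{v_n\}$ and $\{w_n\}$ are bounded in $E$.

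\textbf{First product.} Here the second factor satisfies $\int|z||u_n-u|\le\|z\|_2\|u_n-u\|_2$. By the compact embedding of Lemma \ref{lemma1}(ii) with $q=2$, weak convergence $u_n\rightharpoonup u$ in $E$ gives $u_n\to u$ in $L^2(\mathbb{R}^2)$, so this factor tends to $0$.

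\textbf{Second product (main obstacle).} The difficulty is the weighted term $\int\ln(1+|y|)|z||u_n-u|\,dy$. A direct Cauchy--Schwarz estimate only bounds it by $C\|z\|_E\|u_n-u\|_E$, which is bounded but does not go to zero. I would instead split the integral at a radius $R$.

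\emph{Region $|y|\ge R$.} By Cauchy--Schwarz and \eqref{2.2},
$$\int_{|y|\ge R}\ln(1+|y|)|z||u_n-u|\le \Big(\int_{|y|\ge R}\ln(1+|y|)z^2\Big)^{1/2}\, C\|u_n-u\|_E .$$
Since $\int\ln(1+|y|)z^2<\infty$ for the fixed $z\in E$, the tail integral is small for $R$ large. The factor $\|u_n-u\|_E$ is bounded uniformly in $n$. Hence this piece is smaller than any prescribed $\epsilon$ once $R$ is fixed large enough.

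\emph{Region $|y|<R$.} With $R$ fixed, the weight satisfies $\ln(1+|y|)\le\ln(1+R)$, so
$$\int_{|y|<R}\ln(1+|y|)|z||u_n-u|\le \ln(1+R)\|z\|_2\|u_n-u\|_2\to0$$
by the $L^2$ convergence above.

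\textbf{Conclusion.} The two regions give $\limsup_n \int\ln(1+|y|)|z||u_n-u|\,dy\le\epsilon$ for arbitrary $\epsilon>0$, so this factor tends to $0$. Multiplying by the bounded factors shows both products vanish, hence $D_1(v_nw_n,z(u_n-u))\to0$.

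Alternatively, $V(x)/\ln(1+|x|)\to\infty$ by $(V_1)$ means the weight $\ln(1+|x|)$ is compact relative to $E$. This would give $\int\ln(1+|x|)(u_n-u)^2\to0$ directly, but the splitting argument above avoids needing that.
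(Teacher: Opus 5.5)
Your proof is correct. The paper gives no argument of its own for this lemma: it quotes \cite[Lemma 2.6]{CW}, which is stated for the larger space $X$. The transfer to $E$ is implicit in Lemma~\ref{lemma1}(i): the continuous linear embedding $E\hookrightarrow X$ keeps $\{v_n\},\{w_n\}$ bounded, keeps $u_n\rightharpoonup u$ weakly convergent, and gives $z\in X$.

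You instead prove the statement directly in $E$, and your argument is essentially the one behind the cited lemma:
\begin{itemize}
\item the subadditivity bound $\ln(1+|x-y|)\le\ln(1+|x|)+\ln(1+|y|)$;
\item uniform control of the $v_nw_n$ factors via \eqref{2.1}--\eqref{2.2};
\item the compact embedding $E\hookrightarrow\hookrightarrow L^2(\mathbb{R}^2)$ for the unweighted factor;
\item a cut at radius $R$ for $\int\ln(1+|y|)|z||u_n-u|\,dy$, which needs only the smallness of the weighted tail of the fixed $z$ and the boundedness of $u_n-u$.
\end{itemize}
The citation is shorter. Your version is self-contained and uses only what Section~\ref{s2} already establishes.

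Your closing remark is also accurate, and it is specific to $E$. Because $V(x)/\ln(1+|x|)\to\infty$ under $(V_1)$, one even has $\int\ln(1+|x|)(u_n-u)^2\,dx\to0$, which reduces the weighted term to a single Cauchy--Schwarz estimate. This strong convergence is not available in the space of \cite{CW}. There, a fixed bump translated to $x_n\to\infty$ and multiplied by $\ln(1+|x_n|)^{-1/2}$ converges weakly to $0$, but its weighted norm does not vanish. That is why the radius splitting is the natural argument at the level of generality of \cite{CW}.
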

        
	Next, we give some properties of the logarithmic nonlinearity $s^2\ln s^2$.
    \begin{lemma}\label{lemmaa} %The following conclusions hold.
(i) For any $\varepsilon \in (0,2)$, there exists $C_\varepsilon>0$ such that $|s^2\ln s^2|\leq C_\varepsilon |s|^{2-\varepsilon} $ for $|s|\leq 1.$
\\(ii) For any $\delta >0$, there exists $C_\delta>0$ such that $|s^2\ln s^2|\leq C_\delta |s|^{2+\delta} $ for $|s|\geq1.$
\end{lemma}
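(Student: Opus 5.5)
Both parts are elementary one-variable estimates, so the plan is to study the auxiliary functions $g_\varepsilon(s):=|s|^{\varepsilon}|\ln s^2|$ on $0<|s|\le 1$ and $h_\delta(s):=|s|^{-\delta}\ln s^2$ on $|s|\ge 1$. We will show each is bounded and take $C_\varepsilon$, respectively $C_\delta$, to be the supremum. Since $s^2\ln s^2$ is even in $s$, it suffices to treat $s\ge 0$. At $s=0$ we use the convention $s^2\ln s^2=0$, which is the continuous extension.

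For (i), for $0<s\le 1$ we write $|s^2\ln s^2| = s^{2-\varepsilon}\cdot s^{\varepsilon}|\ln s^2| = s^{2-\varepsilon}\, g_\varepsilon(s)$. The function $g_\varepsilon(s)=-2s^{\varepsilon}\ln s$ is continuous on $(0,1]$ and vanishes at $s=1$. Because $\varepsilon>0$, we have $s^{\varepsilon}\ln s\to 0$ as $s\to 0^+$ (for instance by L'Hôpital), so $g_\varepsilon$ extends continuously to $[0,1]$ and is bounded there. Explicitly, $g_\varepsilon'(s)=-2s^{\varepsilon-1}(\varepsilon\ln s+1)$ vanishes at $s=e^{-1/\varepsilon}$, which gives $\max g_\varepsilon = \frac{2}{e\varepsilon}$. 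So $C_\varepsilon=\frac{2}{e\varepsilon}$ works. The restriction $\varepsilon<2$ is not needed for the bound itself; it only ensures that $2-\varepsilon>0$ in the intended applications.

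For (ii), for $s\ge 1$ we have $s^2\ln s^2\ge 0$ and $s^2\ln s^2 = s^{2+\delta}\cdot 2s^{-\delta}\ln s$. The function $h_\delta(s)=2s^{-\delta}\ln s$ is continuous on $[1,\infty)$, equals $0$ at $s=1$, and tends to $0$ as $s\to\infty$ because $\delta>0$. Its derivative $2s^{-\delta-1}(1-\delta\ln s)$ vanishes at $s=e^{1/\delta}$, giving $\max h_\delta=\frac{2}{e\delta}$. So $C_\delta=\frac{2}{e\delta}$ works.

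There is no genuine obstacle here. The only point needing care is the limiting behaviour $s^{\varepsilon}\ln s\to0$ at $0$ and $s^{-\delta}\ln s\to 0$ at $\infty$, both of which follow from the calculus computation above.
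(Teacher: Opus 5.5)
Your proof is correct and follows the paper's approach: factor out $|s|^{2-\varepsilon}$ (resp.\ $|s|^{2+\delta}$) and show that the remaining factor $|s|^{\varepsilon}|\ln s^2|$ on $(0,1]$ (resp.\ $|s|^{-\delta}\ln s^2$ on $[1,\infty)$) is bounded. The only difference is that the paper combines the limit at $0^+$ (resp.\ $+\infty$) with a maximum over a compact interval, whereas you locate the maximum by calculus, which gives the sharp constants $C_\varepsilon=\frac{2}{e\varepsilon}$ and $C_\delta=\frac{2}{e\delta}$.
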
	
\noindent   {\it Proof.} $(i)$ Since 
\begin{align*}
    \displaystyle \lim_{s \to 0^+} \frac{s^2 \ln s^2}{s^{2-\varepsilon}} = \lim_{s \to 0^+} s^{\varepsilon} \ln s^2 = 0,
\end{align*}
 there exists $M_1 \in (0,1)$ such that 
$|s^2\ln s^2| \le s^{2-\varepsilon}$ for $s \in (0, M_1)$. Denote 
$C_1 = \max_{\substack{s \in [M_1,1]}} \left| {s^\varepsilon \ln s^2} \right|$ and $C_\varepsilon = \max\{1, C_1\}$, then we obtain  $|s^2 \ln s^2| \le C_\varepsilon s^{2-\varepsilon}$ for all $0 < s \le 1$.
\\$(ii)$ Since 
\begin{align*}
    \displaystyle \lim_{s \to+ \infty} \frac{s^2 \ln s^2}{s^{2+\delta}} = \lim_{s \to +\infty} \frac{\ln s^2}{s^{\delta}} = 0,
\end{align*}
there exists $M_2 > 1$ such that 
$|s^2 \ln s^2| \le s^{2+\delta}$ for $s \in (M_2, +\infty)$.
Denote 
$C_2 = \max_{\substack{s \in [1,M_2]}} \left|s^{-\delta}\ln s^2 \right|$ and $C_\delta = \max\{1, C_2\}$, then we have $|s^2\ln s^2| \le C_\delta s^{2+\delta}$ for all $s\ge 1$.
\hfill$\square$	

\vspace{.2cm}
Having established the above lemmas, we are able to study the regularity of the functional $I$ on $E$.
		\begin{lemma}\label{lemma 2}
			The functional $I$ is well-defined on $E$ and $I \in C^1(E, \mathbb{R})$. Moreover, for $u, \varphi \in E$,\[
			\langle I'(u), \varphi \rangle = \int_{\mathbb{R}^2} \left[ \nabla u \nabla \varphi + V u \varphi \right] dx+ \lambda\int_{\mathbb{R}^2} \int_{\mathbb{R}^2} \ln |x-y| u^2(y)u(x)\varphi(x)dxdy- \int_{\mathbb{R}^2} u \varphi \ln u^2dx.		\]
		\end{lemma}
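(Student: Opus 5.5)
We split $I$ into four pieces:
\[
I(u)=\tfrac12\|u\|_E^2+\tfrac12\|u\|_2^2-\tfrac12\int_{\mathbb{R}^2}u^2\ln u^2\,dx+\tfrac{\lambda}{4}\bigl[D_1(u^2,u^2)-D_2(u^2,u^2)\bigr].
\]
This uses $\ln|x-y|=\ln(1+|x-y|)-\ln(1+1/|x-y|)$. (The $u^2$ term is absorbed into the formula for $I$ as written; we keep it as a harmless smooth quadratic term.) We then prove that each piece is well defined and $C^1$ on $E$.

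\textbf{Quadratic and convolution terms.} $\|u\|_E^2$ and $\|u\|_2^2$ are continuous quadratic forms on $E$, the latter by \eqref{2.1}, so they are smooth. For the convolution, \eqref{w1} shows that $(u,v,w,z)\mapsto D_1(uv,wz)$ is a bounded $4$-linear form on $E$. Likewise \eqref{1-7}, combined with H\"older's inequality $\|uv\|_{4/3}\le\|u\|_{8/3}\|v\|_{8/3}$ and Lemma \ref{lemma1}(ii), shows that $D_2(uv,wz)$ is a bounded $4$-linear form on $E$. Bounded multilinear forms are $C^\infty$, and differentiating $u\mapsto D_i(u^2,u^2)$ gives $4D_i(u^2,u\varphi)$. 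Recombining,
\[
\tfrac{\lambda}{4}\cdot 4\bigl[D_1-D_2\bigr](u^2,u\varphi)=\lambda\iint\ln|x-y|\,u^2(y)u(x)\varphi(x)\,dx\,dy .
\]

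\textbf{Logarithmic term.} Set $F(s)=\tfrac12 s^2\ln s^2$, so that $F'(s)=s\ln s^2+s$. We split $\mathbb{R}^2$ into $\{|u|\le1\}$ and $\{|u|>1\}$. By Lemma \ref{lemmaa}, with a fixed $\varepsilon\in(0,\frac{2\gamma}{\gamma+2})$ and $\delta=1$,
\[
|F(u)|\le C\bigl(|u|^{2-\varepsilon}+|u|^{3}\bigr).
\]
The right-hand side is integrable by Lemma \ref{lemma1}(iii) and (ii), so $I$ is well defined. For the derivative, the analogous pointwise bound $|F'(s)|\le C(|s|^{1-\varepsilon}+|s|^{2})$ holds: near $0$ use $|s\ln s^2|\le C|s|^{1-\varepsilon}$, proved exactly as in Lemma \ref{lemmaa}(i), and at infinity use growth below $|s|^2$. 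Gateaux differentiability then follows from the mean value theorem and dominated convergence:
\[
\frac{F(u+t\varphi)-F(u)}{t}=F'(u+\theta t\varphi)\varphi,
\]
which is dominated by
\[
C\bigl(|u|+|\varphi|\bigr)^{1-\varepsilon}|\varphi|+C\bigl(|u|+|\varphi|\bigr)^{2}|\varphi|.
\]
The first term is integrable by H\"older with exponents $\frac{2-\varepsilon}{1-\varepsilon}$ and $2-\varepsilon$ in $L^{2-\varepsilon}$, the second via $L^3$. The derivative is $\int (u\ln u^2+u)\varphi$, and the $+u\varphi$ part cancels against the derivative of $\tfrac12\|u\|_2^2$. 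This leaves exactly the formula stated in Lemma \ref{lemma 2}.

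\textbf{Continuity of the derivative (main obstacle).} We must show that $u_n\to u$ in $E$ implies $\sup_{\|\varphi\|_E\le1}\bigl|\int (F'(u_n)-F'(u))\varphi\bigr|\to0$. The sublinear part near zero is the delicate one, since $s\mapsto s\ln s^2$ is not Lipschitz at $0$ and $L^{2-\varepsilon}$ is not a compact target. We handle it by showing that $F'(u_n)\to F'(u)$ in $L^{\frac{2-\varepsilon}{1-\varepsilon}}+L^{3/2}$ (restricting the domination to a $\dot\cup$-split as above). Passing to a subsequence, $u_n\to u$ a.e. with $|u_n|\le g$ for some $g\in L^{2-\varepsilon}\cap L^3$; such a $g$ exists via the partial converse of dominated convergence in each space, since $u_n\to u$ in both by Lemma \ref{lemma1}. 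Then $|F'(u_n)|\le C(g^{1-\varepsilon}+g^2)$, and dominated convergence gives convergence in the sum space. Since the pairing with $\varphi$ is controlled by $\|\varphi\|_{2-\varepsilon}+\|\varphi\|_3\le C\|\varphi\|_E$, the claim holds along the subsequence. The usual subsequence argument then yields $I'\in C(E,E^*)$, and so $I\in C^1(E,\mathbb{R})$.
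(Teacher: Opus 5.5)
Your proof is correct and has the same skeleton as the paper's. You isolate the logarithmic term, bound it by $C(|u|^{2-\varepsilon}+|u|^3)$ via Lemma \ref{lemmaa} and Lemma \ref{lemma1}(ii)--(iii), and obtain the G\^ateaux derivative from the mean value theorem and dominated convergence, with essentially the paper's majorant. The route differs in the two places where the paper relies on citations.

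\textbf{Convolution term.} The paper only says ``similar to \cite[Lemma 2.2]{CW}''. You check directly from \eqref{w1}, \eqref{1-7} and H\"older that $D_1(uv,wz)$ and $D_2(uv,wz)$ are bounded $4$-linear forms on $E$. This is the Cingolani--Weth argument carried out on $E$, and it makes the step self-contained.

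\textbf{Continuity of the derivative.} You use $|F'(s)|\le C(|s|^{1-\varepsilon}+|s|^2)$. The Nemytskii image then lies only in the sum space $L^{\frac{2-\varepsilon}{1-\varepsilon}}+L^{3/2}$, so you prove convergence there by hand and pair against $L^{2-\varepsilon}\cap L^3$. The paper instead tunes the exponents to $|s\ln s^2|\le C(|s|^{1-\varepsilon_0/2}+|s|^{3/2})$, so that $L^{2-\varepsilon_0}\cap L^3$ is mapped into the single space $L^2$. Then \cite[Theorem A.4]{W} gives $u_n\ln u_n^2\to u\ln u^2$ in $L^2$, and one Cauchy--Schwarz with $\|\varphi\|_2\le C\|\varphi\|_E$ finishes. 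The paper's exponent choice is shorter; yours avoids the external Nemytskii theorem at the cost of sum-space bookkeeping.

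\textbf{One detail to make precise.} The ``split as above'' of $F'(u_n)-F'(u)$ must be along a set that does not depend on $n$ and on which the bounds actually hold. Splitting along $\{|u_n|\le1\}$ destroys a.e. convergence on $\{|u|=1\}$. Splitting along $\{|u|\le1\}$ would need $g^2\in L^{\frac{2-\varepsilon}{1-\varepsilon}}$, which $g\in L^{2-\varepsilon}\cap L^3$ does not give. Take instead $A=\{g+|u|\le 1\}$:
\begin{itemize}
\item On $A$ one has $|F'(u_n)-F'(u)|\le C(g+|u|)^{1-\varepsilon}$, so dominated convergence applies in $L^{\frac{2-\varepsilon}{1-\varepsilon}}$.
\item On the complement of $A$ one has $|F'(u_n)-F'(u)|\le C(g+|u|)^{2}$, so dominated convergence applies in $L^{3/2}$.
\end{itemize}
A continuous cutoff in the variable $s$ works equally well.
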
	
		\noindent   {\it Proof.} 
		 Define
		\[
		I_1(u) := \frac{1}{2} \int_{\mathbb{R}^2} \left[ |\nabla u|^2 + V(x) u^2 \right] dx+ \frac{\lambda}{4} \int_{\mathbb{R}^2} \int_{\mathbb{R}^2} \ln |x-y|u^2(x)u^2(y)dxdy,
		\]
		\[
		I_2(u) := \int_{\mathbb{R}^2} \left( u^2\ln u^2 - u^2 \right)dx.
		\]
Similar to \cite[Lemma 2.2]{CW}, we obtain that $I_1$ is well-defined on $E$ and $I_1\in C^1(E, \mathbb{R})$. Moreover,

 $$\langle I_1'(u), \varphi \rangle = \int_{\mathbb{R}^2}\left[  \nabla u  \nabla \varphi + V u \varphi \right] dx+\lambda \int_{\mathbb{R}^2} \int_{\mathbb{R}^2} \ln |x-y| u^2(y)u(x)\varphi(x) dxdy.$$

Next, we prove that $I_2$ is well-defined on $E$ and $I_2 \in C^1(E, \mathbb{R})$.
Denote $\varepsilon_0 := \frac{\gamma}{\gamma+2}$. It follows from Lemma \ref{lemmaa} that
\begin{align}\label{2.8}
    |u^2 \ln u^2|\leq C\left( |u|^{2-\varepsilon_0} + |u|^3 \right).
\end{align}
We deduce from \eqref{2.8} and Lemma \ref{lemma1} that
$$\left| \int_{\mathbb{R}^2} u^2 \ln u^2 dx\right| \leq C \int_{\mathbb{R}^2} \left( |u|^{2-\varepsilon_0} + |u|^3 \right)dx \leq C \left( \|u\|_{E}^{2-\varepsilon_0}+ \|u\|_{E}^3 \right) < +\infty,$$
which means that $I_2$ is well-defined on $E$. Suppose that $u_n \to u$ in $E$. By Lemma \ref{lemma1}, we get $u_n \to u$ in $L^{2-\varepsilon_0}(\mathbb{R}^2) \cap L^3(\mathbb{R}^2)$. According to \eqref{2.8} and \cite[Theorem A.4]{W}, we obtain 
\begin{align*}
    u_n^2 \ln u_n^2 \to u^2\ln u^2\ \ \text{in}\ \ {L}^1(\mathbb{R}^2),
\end{align*}
%$$u_n^2 \ln u_n^2 \to u^2\ln u^2,$$ 
which implies
\[
\left| \int_{\mathbb{R}^2} \left( u_n^2 \ln u_n^2 - u^2 \ln u^2 \right)dx \right| \leq \| u_n^2 \ln u_n^2 - u^2 \ln u^2\|_{{L}^1(\mathbb{R}^2)} \to 0 \, \quad \mathrm{as}\quad n\rightarrow \infty
.
\]
So, 
$I_2 \in C(E, \mathbb{R})$.

For 
$u, \varphi \in E$, $0 < |t| < 1$, by the mean value theorem, there exists $\theta \in (0,1)$
such that
\begin{align}
\notag&\frac{1}{|t|} \left| (u+t\varphi)^2 \ln (u+t\varphi)^2 - \left(u+t\varphi \right)^2  - \left(u^2\ln u^2-u^2 \right) \right|\\\notag
 = &2 |\varphi (u+\theta t \varphi) \ln (u+\theta
t \varphi)^2|
\\\notag
\leq &C |\varphi| \left( |u+\theta t \varphi|^{1-\varepsilon_0} + |u+\theta t \varphi|^2 \right)\\\notag
\leq &C |\varphi| \left( |u|^{1-\varepsilon_0} + |\varphi|^{1-\varepsilon_0} + |u|^2 + |\varphi|^2 \right) \in L^1(\mathbb{R}^2).
\end{align}
It follows from Lebesgue's dominated convergence theorem that
\begin{align*}
    \langle I_2'(u), \varphi \rangle
    = &\lim_{t \to 0} \frac{1}{t} \left[ I_2(u+t\varphi) - I_2(u) \right] = \lim_{t \to 0} 2\int_{\mathbb{R}^2} \varphi(u+\theta t \varphi) \ln (u+\theta t \varphi)^2dx \\
    =&2 \int_{\mathbb{R}^2} \varphi u \ln u^2dx.
\end{align*}
Suppose that 
$u_n \to u$ in $E$ as $n\rightarrow\infty$, then $u_n \to u$ in $L^{2-\varepsilon_0}(\mathbb{R}^2) \cap L^3(\mathbb{R}^2)$. Note that %$|u \ln u^2| \leq C \left( |u|^{1-\frac{\varepsilon_0}{2}} + |u|^\frac{3}{2} \right)$,
\begin{align*}
    |u \ln u^2| \leq C \left( |u|^{1-\frac{\varepsilon_0}{2}} + |u|^\frac{3}{2} \right),
\end{align*}
which, together with \cite[Theorem A.4]{W}, yields that
$u_n \ln u_n^2 \to u \ln u^2$ in $L^2(\mathbb{R}^2)$ as $n\rightarrow\infty$. Therefore,
\begin{align}
	\left| \langle I_2'(u_n), \varphi \rangle - \langle I_2'(u), \varphi \rangle \right|
\notag 
&	= \left|2 \int_{\mathbb{R}^2} \left( u_n \ln u_n^2 - u \ln u^2 \right) \varphi  dx \right|
	\\\notag
    &\leq 2\|u_n \ln u_n^2 - u \ln u^2\|_{L^2(\mathbb{R}^2)} \|\varphi\|_{L^2(\mathbb{R}^2)}
\\&	= o_n(1) \cdot \|\varphi\|_E,\notag
\end{align}
which implies that $\|I_2'(u_n) - I_2'(u)\|_{E'} \to 0$. Here $E'$ denotes the dual space of $E$. To sum up, $I_2 \in C^1(E, \mathbb{R})$. We complete the proof. 
\qed

  \vspace{.2cm}
In the following, we recall the general minimax principle from \cite{3}, which will be used to prove the existence of a Cerami sequence for the functional $I$ with an additional property. %a slightly stronger variant of Theorem 2.8 in \cite{5} that yields Cerami sequences rather than Palais–Smale sequences.
		\begin{lemma}{\rm(\cite[Proposition 2.8]{3})}\label{lemma2.5}
		Let $Y$ be a Banach space. Let $N_0$ be a closed subspace of the metric space $N$ and $\Gamma_0\subset C(N_0, Y)$. Define
		$$\Gamma:=\{\gamma\in C(N,Y)\ | \ \gamma|_{N_0}\in \Gamma_0\}.$$
		If $\psi\in C^1(Y,\mathbb{R})$ satisfies 
		$$\infty>c:=\inf_{\gamma\in \Gamma}\sup_{u\in N}\psi(\gamma(u))>b:=\sup_{\gamma_0\in \Gamma_0}\sup_{u\in N_0} \psi(\gamma_0(u)),$$
		then, for every \(\varepsilon \in(0, \frac{c-b}{2})\), \(\delta>0\) and \(\gamma \in \Gamma\) with \(\sup _{u \in N} \psi(\gamma(u)) \leq c+\varepsilon\),  there exists \(u \in Y\) such that
	\\	\  $ (i)\ c-2 \varepsilon \leq \psi(u) \leq c+2 \varepsilon,$
	\\	\ $  (ii)\ dist (u, \gamma(N)) \leq 2 \delta$,
    \\	$	(iii)\ \left(1+\| u\| _{Y}\right)\left\| \psi'(u)\right\| _{Y'} \leq \frac{8 \varepsilon}{\delta}.$
\end{lemma}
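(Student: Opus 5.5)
We argue by contradiction, following the scheme of the quantitative deformation lemma (Willem, \emph{Minimax Theorems}, Lemma 2.3), adapted to the Cerami weight $1+\|u\|_Y$. Fix $\varepsilon\in(0,\frac{c-b}{2})$, $\delta>0$ and $\gamma\in\Gamma$ with $\sup_{N}\psi\circ\gamma\le c+\varepsilon$. Assume that no $u\in Y$ satisfies (i)--(iii) simultaneously. Set
\[
S:=\{u\in Y:\ \mathrm{dist}(u,\gamma(N))\le 2\delta\},\qquad S_\delta:=\{u\in Y:\ \mathrm{dist}(u,S)\le\delta\}.
\]
By the contradiction hypothesis, every $u\in S$ with $|\psi(u)-c|\le 2\varepsilon$ satisfies $(1+\|u\|_Y)\|\psi'(u)\|_{Y'}>8\varepsilon/\delta$.

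\textbf{Step 1: construct the vector field.} On $\tilde Y:=\{u:\psi'(u)\neq0\}$ we take a locally Lipschitz pseudo-gradient field $v$, that is,
\[
\|v(u)\|\le 2\|\psi'(u)\|,\qquad \langle\psi'(u),v(u)\rangle\ge\|\psi'(u)\|^2 .
\]
We then fix a locally Lipschitz cutoff $\chi:Y\to[0,1]$ with $\chi=1$ on $A:=\psi^{-1}([c-\varepsilon,c+\varepsilon])\cap S$ and $\chi=0$ outside $B:=\psi^{-1}([c-2\varepsilon,c+2\varepsilon])\cap S_{\delta'}$, where $\delta'\le\delta$ is chosen small. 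On $B$ the derivative is nonzero, so the field
\[
f(u):=-\chi(u)\,(1+\|u\|_Y)\,\frac{v(u)}{\|v(u)\|^2}
\]
is well defined and locally Lipschitz. The Cerami weight controls its size: $\|f(u)\|\le(1+\|u\|)/\|\psi'(u)\|$, which is at most of order $\delta(1+\|u\|)^2/(8\varepsilon)$ on $B$. In addition, along $f$ we have $\frac{d}{dt}\psi\le-\chi(1+\|u\|)/4$ or, after normalizing, a bound of the same type. The correct form of the normalization is the main technical choice.

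\textbf{Step 2: flow and displacement estimate.} Let $\sigma(t,u)$ be the flow of $f$. It is global in time because $f$ has at most linear growth once we normalize by $(1+\|u\|)\,\|\psi'\|$ instead of $\|\psi'\|^2$. I would take
\[
f(u)=-\chi(u)\,\frac{\delta}{8\varepsilon}\,\frac{(1+\|u\|)\,v(u)}{\|v(u)\|\cdot(1+\|u\|)\|\psi'(u)\|}\cdot\|\psi'(u)\|,
\]
or a similar expression, so that $\|f\|\le\frac{\delta}{4\varepsilon}(1+\|u\|)$ and
\[
\frac{d}{dt}\psi(\sigma)\le-\chi\cdot\frac{\delta}{8\varepsilon}\cdot\frac{1}{2}\cdot\frac{8\varepsilon}{\delta}=-\frac{\chi}{2}
\]
on $B$. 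Gronwall's inequality applied to $\|\sigma(t,u)-u\|\le\int_0^t\|f\|$ then bounds the displacement up to time $t=4\varepsilon$ by roughly $\delta(1+\|u\|)$. This is where the main obstacle lies. The weight $(1+\|u\|)$ makes the displacement proportional to $\|u\|$, so Euclidean distance is not preserved uniformly. I would first try the trick from \cite{3}: measure the displacement in the logarithmic variable $\ln(1+\|u\|)$. This gives $\frac{d}{dt}\ln(1+\|\sigma\|)\le\delta/(4\varepsilon)$, and combined with the hypothesis $(1+\|u\|)\|\psi'(u)\|>8\varepsilon/\delta$ (rather than $\|\psi'\|>8\varepsilon/\delta$) the constants close so that points starting in $\gamma(N)$ stay in $S$ up to the required time.

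\textbf{Step 3: conclude.} Let $\eta:=\sigma(T,\cdot)$ with $T$ of order $4\varepsilon$. Since $b<c-2\varepsilon$, we have $\chi=0$ on $\gamma_0(N_0)$, so $\eta\circ\gamma|_{N_0}=\gamma|_{N_0}\in\Gamma_0$, and therefore $\eta\circ\gamma\in\Gamma$. For $w\in N$, the value $\psi(\sigma(t,\gamma(w)))$ is nonincreasing in $t$. As long as it stays in $[c-\varepsilon,c+\varepsilon]$, the trajectory stays in $S$ by Step 2, hence in $A$, where $\chi=1$, so the value decreases at rate at least $\frac12$. Starting from at most $c+\varepsilon$, it therefore drops below $c-\varepsilon$ by time $T=4\varepsilon$. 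This gives $\sup_N\psi(\eta\circ\gamma)\le c-\varepsilon<c$, contradicting the definition of $c$. Hence some $u$ satisfies (i)--(iii).
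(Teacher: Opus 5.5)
The paper gives no proof of this statement; it quotes it from \cite{3}. Your argument therefore has to stand on its own, and it breaks at Step~2, exactly where you flag the obstacle.

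\textbf{Why Step 2 does not close.} The contradiction hypothesis only gives $\|\psi'(u)\|>8\varepsilon/(\delta(1+\|u\|))$. So whatever normalization you choose, it only guarantees a drop of $2\varepsilon$ after a Euclidean displacement of order $\delta(1+\|u\|)$. With your bounds $\|f\|\le\frac{\delta}{4\varepsilon}(1+\|u\|)$ and $T=4\varepsilon$, Gronwall yields only $\|\sigma(T,u)-u\|\le(1+\|u\|)(e^{\delta}-1)$. This bound is larger than $2\delta$ as soon as $\|u\|\ge1$, so nothing keeps the trajectory in $S$. Passing to $\ln(1+\|\sigma\|)$ controls displacement only in the weighted metric $d(u,v)=\inf_p\int_0^1\|\dot p\|/(1+\|p\|)\,dt$. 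Converting back to the norm reintroduces the factor $1+\|u\|$, so the constants do not close.

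Your displayed $f$ also simplifies to $-\chi\frac{\delta}{8\varepsilon}v/\|v\|$. For that field, the rate $\frac{d}{dt}\psi(\sigma)\le-\chi/2$ you compute requires $\|\psi'\|\ge 8\varepsilon/\delta$, which is the unweighted hypothesis. Run that way, your scheme is Willem's proof of \cite[Theorem 2.8]{W}, whose conclusion is $\|\psi'(u)\|\le 8\varepsilon/\delta$ rather than (iii). If you instead keep the factor $1+\|u\|$ in $f$, the rate is correct but the displacement estimate above applies.

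Separately, your cutoff in Step 1 is nested the wrong way. $B$ contains points of $S_{\delta'}\setminus S$, where the contradiction hypothesis says nothing about $\psi'$, so $f$ need not even be defined there.

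\textbf{The statement as written is false.} No choice of field repairs Step 2, because with the Euclidean distance in (ii) the statement fails. The quantities $c$, $b$ and conditions (i), (ii) are invariant under translations of $Y$, while the weight $1+\|u\|$ is not. Here is a counterexample.
\begin{itemize}
\item Take $Y=\mathbb{R}^2$ and $f(x,y)=x^2+(1-x)^3y^2$. Its only critical point is the strict local minimum $0$, and $f(4,1)<0$.
\item Put $\psi_z(u):=f(u-z)$, $N=[0,1]$, $N_0=\{0,1\}$, and $\Gamma_0=\{\gamma_0\}$ with $\gamma_0(0)=z$, $\gamma_0(1)=z+(4,1)$. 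Then $b=0$ and $c>0$ (the mountain pass level of $f$) do not depend on $z$.
\item Fix $\varepsilon\in(0,c/2)$, $\delta>0$, and a path $\tilde\gamma$ from $0$ to $(4,1)$ with $\max f\circ\tilde\gamma\le c+\varepsilon$. Let $K$ be the compact set of $w$ with $\mathrm{dist}(w,\tilde\gamma([0,1]))\le 2\delta$ and $|f(w)-c|\le 2\varepsilon$.
\item Since $f\ge c-2\varepsilon>0$ on $K$, $K$ contains no critical point, so $\mu:=\min_K|\nabla f|>0$.
\item For $\gamma:=z+\tilde\gamma$, any $u$ satisfying (i) and (ii) has the form $z+w$ with $w\in K$. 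Hence $(1+|u|)|\nabla\psi_z(u)|\ge(1+|z|-\max_K|w|)\mu$, which exceeds $8\varepsilon/\delta$ once $|z|$ is large, so (iii) fails.
\end{itemize}

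\textbf{What is actually true.} Your logarithmic idea does prove the variant in which (ii) is measured in the metric $d$. That variant only gives $\mathrm{dist}(u,\gamma(N))\le(e^{2\delta}-1)(1+\sup_N\|\gamma\|)$. The unweighted version in \cite{W} is also true, and it is all the paper needs. The proof of Lemma~\ref{lemma2.10} uses only $\sup_n I(u_n)<\infty$ and $K(u_n)\to 0$. The proof of Lemma~\ref{lemma2.11} then uses only boundedness and $\|I'(u_n)\|_{E'}\to 0$.
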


Similar to \cite[Proposition 2.3]{CW}, we have the following regularity result for the solution of equation \eqref{1-4}.
\begin{lemma} \label{lemma2.13}
Let $u$ be a weak solution  of equation \eqref{1-4}. Then $u\in C^{2}(\mathbb{R}^2)$  and 
$$\lim_{|x|\rightarrow\infty}u(x)=0.$$
\end{lemma}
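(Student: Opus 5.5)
We follow the strategy of \cite[Proposition 2.3]{CW}: rewrite the equation as a local Schr\"odinger equation with an effective potential, then run elliptic bootstrap and a uniform local estimate. Set
$$w(x):=\int_{\mathbb{R}^2}\ln|x-y|\,u^2(y)\,dy,$$
so that $u$ is a weak solution of
$$-\Delta u=g(x):=-V(x)u-\lambda w(x)u+u\ln u^2 .$$

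The first step is to control $w$. Write $\ln|x-y|=\ln(1+|x-y|)-\ln\bigl(1+\frac{1}{|x-y|}\bigr)$. For the first piece we use $\ln(1+|x-y|)\le\ln(1+|x|)+\ln(1+|y|)$ together with $u\in E\subset X$ (Lemma \ref{lemma1}(i)). This gives $|w_1(x)|\le C(1+\ln(1+|x|))$, so $w_1$ is continuous and locally bounded. For the second piece we use $\ln(1+1/r)\le 1/r$, and Hardy--Littlewood--Sobolev (or simply H\"older on $B_1(x)$ and its complement) with $u^2\in L^1\cap L^q$ for every $q<\infty$, by Lemma \ref{lemma1}(ii). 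This shows $w_2\in L^\infty(\mathbb{R}^2)$; in fact $w_2$ is H\"older continuous because $u^2\in L^q$ with $q>2$. Hence $w\in C^{0,\alpha}_{loc}$, and similarly $\nabla w=\frac{1}{2\pi}\cdot 2\pi\,\frac{x-y}{|x-y|^2}*u^2$ is locally bounded once $u\in L^\infty_{loc}$.

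The second step is the local bootstrap. The only nonstandard term is $u\ln u^2$. It satisfies $|u\ln u^2|\le C(|u|^{1-\varepsilon}+|u|^{1+\delta})$ by Lemma \ref{lemmaa}, so it lies in $L^p_{loc}$ for every $p$, using $u\in L^q$ for all $q\ge2$ and local integrability of the small power. Since $V\in C^1$ and $w\in L^\infty_{loc}$, we get $g\in L^p_{loc}$ for all $p<\infty$. Calder\'on--Zygmund then gives $u\in W^{2,p}_{loc}$, hence $u\in C^{1,\alpha}_{loc}$.

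Next we need $g\in C^{0,\alpha}_{loc}$ in order to apply Schauder estimates. The function $s\mapsto s\ln s^2$ is H\"older continuous of any order $<1$ on bounded sets, including near $s=0$, so $u\ln u^2\in C^{0,\alpha}_{loc}$ once $u\in C^{1}_{loc}$. The terms $Vu$ and $wu$ are H\"older by the first step. Schauder theory then yields $u\in C^{2,\alpha}_{loc}\subset C^2(\mathbb{R}^2)$. This non-Lipschitz behaviour of the logarithm at $0$ is the point needing care; H\"older continuity suffices for the argument.

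For the decay, I would apply the local $W^{2,p}$ (or Moser/De Giorgi) estimate on unit balls $B_1(x_0)$ with constants uniform in $x_0$. The obstacle is that the coefficients $V$ and $w$ grow at infinity. To avoid this, use the sign structure instead of treating them as perturbations. Since $V\to\infty$ and $|w(x)|\le C\ln(2+|x|)\ll V(x)$ by $(V_1)$, and since $u\ln u^2\le 0$ where $|u|\le1$ while $|u\ln u^2|\le C|u|^{1+\delta}$ for $|u|\ge 1$, the function $|u|$ is a weak subsolution, $-\Delta|u|\le C|u|^{1+\delta}$, for $|x|$ large. The local maximum principle gives
$$\sup_{B_{1/2}(x_0)}|u|\le C\bigl(\|u\|_{L^2(B_1(x_0))}+\|\,|u|^{1+\delta}\|_{L^p(B_1(x_0))}\bigr),$$
with $C$ independent of $x_0$. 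The right-hand side tends to $0$ as $|x_0|\to\infty$ because $u\in L^q(\mathbb{R}^2)$ for all $q\ge2$. Hence $\lim_{|x|\to\infty}u(x)=0$.
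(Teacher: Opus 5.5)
Your proposal is correct and takes the route the paper intends. The paper gives no argument beyond deferring to \cite[Proposition 2.3]{CW}, and you carry out that adaptation: local bounds on $w$, then Calder\'on--Zygmund and Schauder using the H\"older continuity of $s\mapsto s\ln s^2$, then decay via a uniform local maximum principle, with the unbounded $V$ absorbed by the sign of $V+\lambda w$ at infinity. The only slip is the sign fact: what Kato's inequality needs is $\mathrm{sgn}(u)\,u\ln u^2=|u|\ln u^2\le 0$ on $\{|u|\le 1\}$, not $u\ln u^2\le 0$, which fails where $u<0$.
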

		
%		\begin{lemma}\label{lemma2.8}
%Assume that $u\in E$ is a weak solution to \eqref{1-4}, then the following Pohoz\v{a}ev type identity holds:
%\end{lemma}	
%\noindent   {\it Proof.} For every $u\in E$, define $u_t:=u\left(\frac{x}{t} \right) $, we deduce
%\begin{align}
%		I(u_t)\notag&=\dfrac{1}{2}\int_{\mathbb{R}^2}\left[ |\nabla u_t|^2+u_t^2+V(x)u_t^2\right]dx -\frac{1}{2}\int_{\mathbb{R}^2}u_t^2\ln u_t^2dx\\\notag&\quad +\frac{\lambda}{4}\int_{\mathbb{R}^2}\int_{\mathbb{R}^2} \ln |x-y|u_t^2(x)u_t^2(y)dxdy\\\notag&=\dfrac{1}{2}\int_{\mathbb{R}^2}|\nabla u|^2dx+\dfrac{t^2}{2}\int_{\mathbb{R}^2}u^2dx+\dfrac{t^2}{2}\int_{\mathbb{R}^2} V(tx)u^2dx-\dfrac{t^2}{2}\int_{\mathbb{R}^2} u^2\ln u^2dx\\\notag&\quad +\dfrac{\lambda t^4\ln t}{4}\left( \int_{\mathbb{R}^2} u^2dx\right) ^2+\dfrac{\lambda t^4}{4}\int_{\mathbb{R}^2} \ln |x-y|u^2(x)u^2(y)dxdy,
%\end{align}
%thus		
%\begin{align}
%	\dfrac{d}{dt}I(u_t)|_{t=1}\notag&=\int_{\mathbb{R}^2}u^2dx+\int_{\mathbb{R}^2} V(x)u^2dx+\dfrac{1}{2}\int_{\mathbb{R}^2}(\nabla V\cdot x) u^2dx\\\notag&\quad +\lambda\int_{\mathbb{R}^2}\int_{\mathbb{R}^2} \ln|x-y|u^2(x)u^2(y)dxdy+\dfrac{\lambda}{4}\left(\int_{\mathbb{R}^2} u^2dx \right)^2-\int_{\mathbb{R}^2}u^2\ln u^2dx,
%\end{align}
%which means \eqref{r1} holds. \hfill$\square$

%Moreover, using Lemma \ref{lemma2.8}, we define 

        \section{Ground state solution for $\lambda>0$}\label{s3}	

In this section, we will give the proof of Theorem \ref{theorem1.1}. % by Pohozaev identity and Lemma \ref{lemma2.5}.  		
First, we show that the energy functional $I$ satisfies the mountain pass geometry for $\lambda>0$.

		\begin{lemma}\label{lemma2.6}
	There exists $\alpha>0$ such that for all $0<\tau\le\alpha$,
	$$ m_{\tau}:=\inf \{I(u)\ |\ u\in E, ||u||_E=\tau\}>0$$
	and
	$$e_{\tau}:=\inf \{\langle I'(u),u\rangle \ | \ u\in E, ||u||_E=\tau\}>0. $$
	\end{lemma}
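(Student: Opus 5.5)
We split the logarithmic convolution as $\ln|x-y| = \ln(1+|x-y|) - \ln\left(1+\frac{1}{|x-y|}\right)$, so that
\[
\int_{\mathbb{R}^2}\int_{\mathbb{R}^2}\ln|x-y|\,u^2(x)u^2(y)\,dx\,dy = D_1(u^2,u^2)-D_2(u^2,u^2).
\]
Since $\lambda>0$ and $D_1(u^2,u^2)\ge 0$, we may drop the $D_1$ term from below. We then control $D_2(u^2,u^2)$ by \eqref{2.8.1} and the embedding $E\hookrightarrow L^{8/3}(\mathbb{R}^2)$ from Lemma \ref{lemma1}(ii), which give $D_2(u^2,u^2)\le C\|u\|_{8/3}^4\le C\|u\|_E^4$.

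For the logarithmic term, the only part that can make $I$ small is the positive part of $u^2\ln u^2$, i.e.\ where $|u|>1$. By Lemma \ref{lemmaa}(ii) with $\delta=1$ we have $(u^2\ln u^2)^+\le C|u|^3$, and hence $\int_{\mathbb{R}^2} u^2\ln u^2\,dx\le C\|u\|_3^3\le C\|u\|_E^3$. The region $|u|\le1$ contributes $u^2\ln u^2\le 0$, which only helps. It is important here that the $u^2$ terms cancel correctly. Writing $I=\frac12\int(|\nabla u|^2+Vu^2)-\frac12\int u^2\ln u^2+\cdots$, as in Lemma \ref{lemma 2} (the introduction's formula with the extra $u^2$ looks like a typo), we obtain
\[
I(u)\ge \tfrac12\|u\|_E^2 - C\|u\|_E^3 - C\lambda\|u\|_E^4,
\]
which is positive on the sphere $\|u\|_E=\tau$ for all small $\tau$.

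For $e_\tau$ we argue in the same way:
\[
\langle I'(u),u\rangle=\|u\|_E^2+\lambda D_1(u^2,u^2)-\lambda D_2(u^2,u^2)-\int_{\mathbb{R}^2} u^2\ln u^2\,dx \ge \|u\|_E^2-C\|u\|_E^4-C\|u\|_E^3.
\]
This is at least $\tfrac12\tau^2$ for $\tau\le\alpha$ with $\alpha$ small enough. Choosing $\alpha$ to satisfy both inequalities gives $m_\tau\ge \tfrac14\tau^2>0$ and $e_\tau\ge\tfrac12\tau^2>0$ uniformly on the sphere, so the infima are positive.

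The main point needing care is the sign bookkeeping of the logarithmic term. Near zero $u^2\ln u^2$ is sublinear, but negative, so it enters $I$ with a favorable sign; we must not bound it in absolute value via the $L^{2-\varepsilon}$ estimate, which would produce a dominant $\|u\|_E^{2-\varepsilon}$ term. If the functional genuinely carries an extra $+\frac12\int u^2$, that term only adds a positive quantity to $I$, and the argument is unchanged.
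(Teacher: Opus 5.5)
Your argument is correct and is the same as the paper's: use $\lambda>0$ to discard the nonnegative $D_1(u^2,u^2)$, bound $D_2(u^2,u^2)\le C\|u\|_{8/3}^4\le C\|u\|_E^4$ via Hardy--Littlewood--Sobolev, and estimate only the part of $\int_{\mathbb{R}^2}u^2\ln u^2\,dx$ on $\{|u|>1\}$ by $C\|u\|_E^q$ (you take $q=3$), which gives $I(u)\ge\frac12\|u\|_E^2-C\|u\|_E^3-C\|u\|_E^4$ and the analogous bound for $\langle I'(u),u\rangle$. One correction to your aside: the $+\frac12\int_{\mathbb{R}^2}u^2\,dx$ in $I$ is not a typo, since the regularity lemma writes $I=I_1-\frac12 I_2$ with $I_2(u)=\int_{\mathbb{R}^2}(u^2\ln u^2-u^2)\,dx$ (a primitive of $s\ln s^2$ is $\frac12 s^2\ln s^2-\frac12 s^2$), and this term cancels the extra $\int_{\mathbb{R}^2}u\varphi\,dx$ produced by differentiating $-\frac12\int_{\mathbb{R}^2}u^2\ln u^2\,dx$, so your formula for $\langle I'(u),u\rangle$ is the right one for the actual functional and, as you say, the extra nonnegative term only helps the lower bound for $I$.
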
				
	\noindent   {\it Proof.} 
	For $q>2$ and $u\in E$, we deduce from Lemmas \ref{lemma1} and \ref{lemmaa} that 
	\begin{align}\label{c5}
		\int_{\mathbb{R}^2} u^2\ln u^2dx\leq \int_{\{|u| > 1\}} u^2\ln u^2dx \leq C\int_{\{|u| > 1\}} |u|^qdx\leq C||u||_q^q\leq C||u||_E^q,
	\end{align} 
	where $C>0$ is independent of $u$. Moreover, 
	%for every $u\in E$,	by the definition $I$, 	
    by \eqref{2.8.1}, \eqref{c5} and Lemma \ref{lemma1}, we get
	\begin{align}
	I(u)\notag%&=\dfrac{1}{2}\int_{\mathbb{R}^2}\left[ |\nabla u|^2+u^2+V(x)u^2\right]dx -\frac{1}{2}\int_{\mathbb{R}^2}u^2\ln u^2dx\\\notag
   % &\quad +\frac{\lambda}{4}\int_{\mathbb{R}^2}\int_{\mathbb{R}^2} \ln |x-y|u^2(x)u^2(y)dxdy\\\notag
    &=\dfrac{1}{2}\int_{\mathbb{R}^2}\left[ |\nabla u|^2+u^2+V(x)u^2\right]dx -\frac{1}{2}\int_{\mathbb{R}^2}u^2\ln u^2dx\\\notag&\quad +\frac{\lambda}{4}\int_{\mathbb{R}^2}\int_{\mathbb{R}^2} \ln \left( 1+|x-y|\right) u^2(x)u^2(y)dxdy\\\notag&\quad-\frac{\lambda}{4}\int_{\mathbb{R}^2}\int_{\mathbb{R}^2} \ln \left( 1+\dfrac{1}{|x-y|}\right) u^2(x)u^2(y)dxdy\\\notag
    &\geq\dfrac{1}{2}||u||_E^2-C||u||^q_E-C||u||_{\frac{8}{3}}^4\\\notag
    & \geq %\dfrac{1}{2}||u||^2-C_1||u||^q_q-C_2||u||^4=
    \frac{||u||^2_E}{2}\left(1-C||u||^2_E-C||u||^{q-2}_E \right).
	\end{align}
%where $C_1, C_2>0$ are constants and $2< q<2^*$. 
This means that there exists $\alpha>0$ such that $m_{\tau}>0$ for $0<\tau\leq \alpha$.	
Similarly, we can choose $\alpha>0$ small enough such that $e_{\tau}>0$ for $0<\tau\leq \alpha$. This completes the proof.
%Moreover
%\begin{align}
%\langle I'(u),u\rangle \notag&=\int_{\mathbb{R}^2}\left[ |\nabla u|^2+V(x)u^2\right]dx-\int_{\mathbb{R}^2}u^2\ln u^2dx\\\notag&\quad +\lambda\int_{\mathbb{R}^2}\int_{\mathbb{R}^2}\ln |x-y| u^2(x)u^2(y)dxdy\\\notag &\geq\int_{\mathbb{R}^2}\left[ |\nabla u|^2+V(x)u^2\right]dx-\int_{\mathbb{R}^2}u^2\ln u^2dx\\\notag&\quad -\lambda\int_{\mathbb{R}^2}\int_{\mathbb{R}^2}\ln \left( 1+\dfrac{1}{|x-y|}\right)  u^2(x)u^2(y)dxdy\\\notag&\geq\int_{\mathbb{R}^2} |\nabla u|^2dx+\inf V(x)\int_{\mathbb{R}^2}u^2dx-\int_{\mathbb{R}^2}u^2\ln u^2dx\\\notag&\quad -\lambda\int_{\mathbb{R}^2}\int_{\mathbb{R}^2}\ln \left( 1+\dfrac{1}{|x-y|}\right)  u^2(x)u^2(y)dxdy\\\notag&\geq C||u||^2-C_1||u||^q-C_2||u||^4=||u||^2\left(C-C_1||u||^{q-2}-C_2||u||^2 \right),
%\end{align}
%where $C_1, C_2>0$ are constants and $2< q<2^*$. This means that $e_{\tau}>0$ for $\alpha>0$ small enough.
\hfill$\square$
		
\begin{lemma}\label{lemma2.7}
Assume that  $u\in E\setminus \{0\}$, then %there exists $u_\theta\in E$ such that 
$$I(u_\theta)\rightarrow-\infty, \quad \mathrm{as} \ \ \theta\rightarrow+\infty,$$
where  $u_\theta(x):=\theta^{a}u(\theta x)$ and $a>\max\{2,1+\frac{\kappa}{2}\}$  for  $\theta>0$.
\end{lemma}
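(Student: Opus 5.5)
The plan is to compute $I(u_\theta)$ explicitly by the change of variables $y=\theta x$, read off how each term scales in $\theta$, and check that for $a>\max\{2,1+\frac{\kappa}{2}\}$ the term $-\frac{\lambda}{4}\theta^{4a-4}\ln\theta\,\|u\|_2^4$ from the logarithmic convolution dominates as $\theta\to+\infty$. This uses $\lambda>0$ and $u\neq0$. Throughout we take $\theta\ge1$.

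\emph{Step 1: the local terms.} In dimension two we have $\nabla u_\theta(x)=\theta^{a+1}(\nabla u)(\theta x)$ and $dx=\theta^{-2}dy$. Hence
\[
\int_{\mathbb{R}^2}|\nabla u_\theta|^2dx=\theta^{2a}\int_{\mathbb{R}^2}|\nabla u|^2dy,\qquad \int_{\mathbb{R}^2}u_\theta^2dx=\theta^{2a-2}\int_{\mathbb{R}^2}u^2dy .
\]
For the logarithmic term, $\ln u_\theta^2(x)=\ln u^2(\theta x)+2a\ln\theta$, so
\[
\int_{\mathbb{R}^2}u_\theta^2\ln u_\theta^2dx=\theta^{2a-2}\Big(\int_{\mathbb{R}^2}u^2\ln u^2dy+2a\ln\theta\int_{\mathbb{R}^2}u^2dy\Big).
\]
The right-hand side is finite by Lemma \ref{lemma 2}. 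Its contribution to $I(u_\theta)$ is therefore $O(\theta^{2a-2}\ln\theta)$.

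\emph{Step 2: the potential term.} This is the step where $(V_2)$ enters and the only delicate point, since $\theta$ cannot be pulled out of $V(y/\theta)$. We have
\[
\int_{\mathbb{R}^2}V(x)u_\theta^2dx=\theta^{2a-2}\int_{\mathbb{R}^2}V(y/\theta)u^2(y)dy .
\]
Fix $y$ and set $g(s):=V(sy)$ for $s>0$. By $(V_2)$,
\[
g'(s)=\nabla V(sy)\cdot y=s^{-1}\,\nabla V(sy)\cdot(sy)\ge-\kappa s^{-1}g(s).
\]
Thus $s\mapsto s^\kappa g(s)$ is nondecreasing on $(0,\infty)$. Comparing $s=1/\theta$ with $s=1$ gives $V(y/\theta)\le\theta^{\kappa}V(y)$ for $\theta\ge1$. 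Consequently
\[
\int_{\mathbb{R}^2}V(x)u_\theta^2dx\le\theta^{2a-2+\kappa}\int_{\mathbb{R}^2}V(y)u^2(y)dy<\infty .
\]

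\emph{Step 3: the convolution term.} Using $\ln|x-y|=\ln|x'-y'|-\ln\theta$ with $x'=\theta x$, $y'=\theta y$, we get
\[
\int\!\!\int\ln|x-y|\,u_\theta^2(x)u_\theta^2(y)\,dxdy=\theta^{4a-4}\Big(\int\!\!\int\ln|x-y|\,u^2(x)u^2(y)\,dxdy-\ln\theta\,\|u\|_2^4\Big).
\]
The double integral on the right is finite for $u\in E$ by \eqref{2.8.0} and \eqref{2.8.1}.

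\emph{Step 4: conclusion.} Collecting Steps 1--3, for $\theta\ge1$,
\[
I(u_\theta)\le C_1\theta^{2a}+C_2\theta^{2a-2+\kappa}+C_3\theta^{2a-2}(1+\ln\theta)+C_4\theta^{4a-4}-\frac{\lambda}{4}\|u\|_2^4\,\theta^{4a-4}\ln\theta .
\]
Since $a>2$, we have $4a-4>2a>2a-2$. Since $a>1+\frac{\kappa}{2}$, we have $4a-4>2a-2+\kappa$. The last term also beats $C_4\theta^{4a-4}$ because of the extra factor $\ln\theta\to\infty$. Because $\lambda>0$ and $\|u\|_2>0$, the last term dominates every other term, and therefore $I(u_\theta)\to-\infty$ as $\theta\to+\infty$.
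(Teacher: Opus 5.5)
Your proposal is correct and is essentially the paper's proof: you scale each term of $I(u_\theta)$ exactly, bound the potential term by $V(y/\theta)\le\theta^{\kappa}V(y)$, and let $-\frac{\lambda}{4}\theta^{4a-4}\ln\theta\,\|u\|_2^4$ dominate using $a>\max\{2,1+\frac{\kappa}{2}\}$ and $\lambda>0$ (the paper leaves $\lambda>0$ implicit as the standing assumption of that section). The only difference is that you derive the potential bound directly from the monotonicity of $s\mapsto s^{\kappa}V(sy)$, whereas the paper quotes it as \eqref{3.2} from \cite[Lemma 3.1]{LM1}.
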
					
\noindent   {\it Proof.} 
First, we claim $u_\theta \in E$ for $u\in E\setminus \{0\}$. Since $V$ satisfies $(V_2)$, we deduce from \cite[lemma 3.1]{LM1} that 
\begin{align}\label{3.2}
    V(\lambda x)\leq\max\{\lambda^{\kappa}, \lambda^{-\kappa}\}V(x)\ \ \text{for}\ \ \lambda >0,\ x\in \mathbb{R}^2.
\end{align}
So,
\begin{align}
	 \int_{\mathbb{R}^2} |\nabla u_\theta|^2dx=&\theta ^{2a}\int_{\mathbb{R}^2}|\nabla u|^2dx<+\infty,\label{c1}\\
     \int_{\mathbb{R}^2} V(x)u_\theta^2dx=&\theta^{2a-2}\int_{\mathbb{R}^2}V\left( \frac{x}{\theta}\right) u^2dx
     \leq \max\{\theta^{2a-2+\kappa},\theta^{2a-2-\kappa}\} \int_{\mathbb{R}^2} V(x)u^2dx<+\infty.\label{c2}
\end{align}
%\begin{align}
%	\int_{\mathbb{R}^2} V(x)u_\theta^2dx=\theta^2\int_{\mathbb{R}^2}V\left( \frac{x}{\theta}\right) u^2dx\leq \theta^{2+\kappa} \int_{\mathbb{R}^2} V(x)u^2dx<+\infty.
%\end{align}
%and
%\begin{align}
%	\int_{\mathbb{R}^2} \ln (1+|x|^2)u_\theta^2dx\notag&=\theta^2\int_{\mathbb{R}^2}\ln \left(1+\dfrac{|y|^2}{\theta^2} \right) u^2(y)dy\\\notag&=\theta^2\int_{\mathbb{R}^2} \ln\left(1+|y|^2\right)u^2dy+\theta^2\int_{\mathbb{R}^2} \left[\ln \left(1+\dfrac{|y|^2}{\theta^2} \right)  -\ln (1+|y|^2)\right] u^2dy.
%\end{align}
%Applying the Newton-Leibmiz formula, we obtain 
%\begin{align}
%\notag	\left|\ln \left( 1+\frac{|y|^2}{\theta^2}\right)-\ln (1+|y|^2)\right |=\ \left|\int_1^{\frac{1}{\theta^2}}
%\frac{|y|^2}{1+t|y|^2}dt\right|\leq 2|\ln \theta|,
%\end{align}
%which implies $$\left|\int_{\mathbb{R}^2}\left[\ln \left(1+\dfrac{|y|^2}{\theta^2} \right)   -\ln (1+|y|^2)\right]u^2 \right|\leq 2|\ln \theta|\int_{\mathbb{R}^2} u^2dx<+\infty.$$
%Hence, 
%\begin{align}\label{c4}
%	\int_{\mathbb{R}^2} \ln (1+|x|^2)u_\theta^2<\infty.
%\end{align}
It follows from \eqref{c1} and \eqref{c2} that $u_\theta \in E$.
For $\theta>1$, we have 
\begin{align}
	I(u_\theta)\notag&=\dfrac{1}{2}\int_{\mathbb{R}^2}\left[ |\nabla u_\theta|^2+u_\theta^2+V(x)u_\theta^2\right]dx -\frac{1}{2}\int_{\mathbb{R}^2}u_\theta^2\ln u_\theta^2dx\\
    \notag&\quad +\frac{\lambda}{4}\int_{\mathbb{R}^2}\int_{\mathbb{R}^2} \ln |x-y|u_\theta^2(x)u_\theta^2(y)dxdy\\\notag
  %  &=\dfrac{\theta^{2a}}{2}\int_{\mathbb{R}^2}|\nabla u|^2dx+\dfrac{\theta^{2a-2}}{2}\int_{\mathbb{R}^2} u^2dx+\dfrac{\theta^{2a-2}}{2}\int_{\mathbb{R}^2} V\left( \frac{x}{\theta}\right) u^2dx\\\notag&\quad+\dfrac{\lambda\theta^{4a-4}}{4} \int_{\mathbb{R}^2}\int_{\mathbb{R}^2}\ln |x-y| u^2(x)u^2(y)dxdy\\\notag&\quad -\dfrac{\lambda\theta^{4a-4}\ln \theta}{4}\left(\int_{\mathbb{R}^2} u^2dx \right)^2 -a\theta^{2a-2}\ln \theta\int_{\mathbb{R}^2} u^2dx-\frac{\theta^{2a-2}}{2}\int_{\mathbb{R}^2}u^2\ln u^2dx\\
    \notag&\leq\dfrac{\theta^{2a}}{2}\int_{\mathbb{R}^2}|\nabla u|^2dx+\dfrac{\theta^{2a-2}}{2}\int_{\mathbb{R}^2} u^2dx+\dfrac{\theta^{2a-2+\kappa}}{2}\int_{\mathbb{R}^2} V(x)u^2dx\\\notag&\quad+\dfrac{\lambda\theta^{4a-4}}{4} \int_{\mathbb{R}^2}\int_{\mathbb{R}^2}\ln |x-y| u^2(x)u^2(y)dxdy\\\notag&\quad -\dfrac{\lambda\theta^{4a-4}\ln \theta}{4}\left(\int_{\mathbb{R}^2} u^2dx \right)^2 -a\theta^{2a-2}\ln \theta\int_{\mathbb{R}^2} u^2dx-\frac{\theta^{2a-2}}{2}\int_{\mathbb{R}^2}u^2\ln u^2dx. %\\\notag&\rightarrow-\infty, \quad \mathrm{as} \quad \theta\rightarrow\infty.
\end{align}	
Note that $ 4a-4>2a-2+\kappa>0$, which means $I(u_\theta)\rightarrow -\infty$ as $\theta\rightarrow+\infty$. This completes the proof.
	\hfill$\square$

    \vspace{.2cm}
	According to Lemma \ref{lemma2.7}, we can define the mountain pass value
    $$c_{*,1}=\inf_{\gamma\in \Gamma}\max_{t\in [0,1]}I(\gamma(t)),$$ 
where %$\Gamma:= \left\lbrace \gamma\in C([0,1],E)| \ \gamma(0)=0,I(\gamma(1))<0\right\rbrace \not=\emptyset$. 
\begin{align}\label{3.4}
    \Gamma:= \left\lbrace \gamma\in C([0,1],E)\ | \ \gamma(0)=0,I(\gamma(1))<0\right\rbrace \not=\emptyset.
\end{align}
Using Lemma \ref{lemma2.6}, we derive $c_{*,1}\geq m_\alpha>0$.	

\vspace{.2cm}
However, due to the appearance of logarithmic convolution, it is unknown whether an arbitrary Palais-Smale sequence is bounded $E$. In order to overcome this difficulty, inspired by \cite{J}, we are going to construct a Cerami sequence with an additional property that is key to proving its boundedness. For this purpose, we consider the following Banach space
$$\tilde E:=\mathbb{R}\  \times E  $$		
equipped with the standard norm $||(b,w)||_{\tilde E}=\left( |b|^2+||w||_E^2\right)^{\frac{1}{2}} $ for $b\in \mathbb{R}$ and $ w\in E$. Furthermore, we define the  map $\eta: \tilde E\rightarrow E$ by
$$\eta(b,w)[x]:=e^{ab}w(e^bx), \quad \mathrm{for }\  b\in \mathbb{R}, w\in E \ \mathrm{and}\  x\in\mathbb{R}^2$$
and the functional $\Phi:\tilde E\rightarrow \mathbb{R}$ by
\begin{align}\label{a}
	\Phi(b,w):=&I(\eta(b,w))\nonumber\\
    \notag=&\dfrac{e^{2ab}}{2}\int_{\mathbb{R}^2} |\nabla w|^2dx+\dfrac{e^{2b(a-1)}}{2}\int_{\mathbb{R}^2}V\left(\frac{x}{e^b} \right) w^2dx+\dfrac{e^{2b(a-1)}}{2}\int_{\mathbb{R}^2}w^2dx\\
    \notag&+\dfrac{\lambda e^{4b(a-1)}}{4}\int_{\mathbb{R}^2}\int_{\mathbb{R}^2}\ln |x-y|w^2(x)w^2(y)dxdy-\dfrac{\lambda be^{4b(a-1)}}{4}\left( \int_{\mathbb{R}^2} w^2dx\right) ^2\\
    &-abe^{2b(a-1)}\int_{\mathbb{R}^2} w^2dx-\frac{e^{2b(a-1)}}{2}\int_{\mathbb{R}^2} w^2\ln w^2dx.
\end{align}	

\begin{lemma}\label{lemma2.15}
It holds $\Phi\in C^1{(\tilde E,\mathbb{R})}$. Furthermore,
\begin{align}\label{2-13}
	\Phi_b(b,w)\notag&=ae^{2ab}\int_{\mathbb{R}^2}|\nabla w|^2dx+(a-1)e^{2b(a-1)}\int_{\mathbb{R}^2}V\left(\dfrac{x}{e^b} \right) w^2dx\\
    \notag&\quad-\frac{e^{2b(a-1)}}{2}\int_{\mathbb{R}^2}\left( \nabla V\left( \frac{x}{e^b}\right)\cdot \frac{x}{e^b} \right) w^2dx\\
    \notag&\quad +\lambda (a-1)e^{4b(a-1)}\int_{\mathbb{R}^2}\int_{\mathbb{R}^2}\ln|x-y|w^2(x)w^2(y)dxdy\\\notag&\quad-\left(\lambda b(a-1)e^{4b(a-1)}+\dfrac{\lambda e^{4b(a-1)}}{4} \right)\left( \int_{\mathbb{R}^2}w^2dx\right)^2\\
&\quad-\left(e^{2b(a-1)}+2ab(a-1)e^{2b(a-1)} \right) \int_{\mathbb{R}^2} w^2dx-(a-1)e^{2b(a-1)}\int_{\mathbb{R}^2} w^2\ln w^2dx  
 %   \notag&=2\int_{\mathbb{R}^2}|\nabla \eta|^2dx +\int_{\mathbb{R}^2} V \eta^2dx-\dfrac{1}{2}\int_{\mathbb{R}^2}\left( \nabla V\cdot x\right) \eta^2dx\\
%\notag&\quad+\lambda\int_{\mathbb{R}^2}\int_{\mathbb{R}^2}\ln|x-y|\eta^2(x)\eta^2(y)dxdy -\dfrac{\lambda}{4}\left( \int_{\mathbb{R}^2}\eta^2dx    \right)^2-\int_{\mathbb{R}^2}\eta^2\ln \eta^2dx-\int_{\mathbb{R}^2} \eta^2dx \\
%    &=2I'(\eta)\eta-P(\eta)=K(\eta)
\end{align}		
and 
	\begin{align}\label{2-14}
	\langle \Phi_w(b,w),v\rangle \notag&=e^{2ab}\int_{\mathbb{R}^2}\nabla v\nabla wdx+e^{2b(a-1)}\int_{\mathbb{R}^2}V\left(\frac{x}{e^b} \right)vwdx-2abe^{2b(a-1)}\int_{\mathbb{R}^2}vwdx \\\notag&\quad +\lambda e^{4b(a-1)}\int_{\mathbb{R}^2}\int_{\mathbb{R}^2}\ln |x-y|w^2(y)w(x)v(x)dxdy\\\notag&\quad-\lambda be^{4b(a-1)}\int_{\mathbb{R}^2}w^2dx\int_{\mathbb{R}^2}vwdx-e^{2b(a-1)}\int_{\mathbb{R}^2}vw\ln w^2dx\\&=\langle I'(\eta(b,w)),\eta(b,v)\rangle ,
\end{align}		
$	\mathrm{for}\ \mathrm{all}\ b\in \mathbb{R}\ \mathrm{and}\ v,w\in E	$.
\end{lemma}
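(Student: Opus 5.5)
The plan is to write $\Phi=I\circ\eta$ and treat the two variables separately. The $w$-derivative comes from linearity of $\eta(b,\cdot)$ and the chain rule. The $b$-derivative comes from differentiating the explicit formula \eqref{a} term by term. Continuity of both partial derivatives on $\tilde E$ then gives $\Phi\in C^1(\tilde E,\mathbb{R})$, since a functional with continuous partial (Gateaux) derivatives in both variables is Fréchet $C^1$.

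\emph{Step 1: $\eta$ is well defined and continuous.} For fixed $b$, the map $w\mapsto\eta(b,w)$ is linear. By \eqref{c1}, \eqref{c2} (with $\theta=e^b$) and \eqref{3.2}, we have $\|\eta(b,w)\|_E\le C(b)\|w\|_E$, with $C(b)$ locally bounded in $b$. Hence $\eta(b,\cdot)\in\mathcal{L}(E,E)$.

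Continuity of $(b,w)\mapsto\eta(b,w)$ follows from two facts. First, the uniform bound above. Second, $\eta(b_n,w)\to\eta(b,w)$ in $E$ for fixed $w$. For the second fact we approximate $w$ by $C_c^\infty$ functions, for which the convergence is clear, and then use the uniform bound.

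By Lemma \ref{lemma 2} and the chain rule, $\Phi(b,\cdot)=I\circ\eta(b,\cdot)$ is $C^1$ in $w$ with
\[
\langle\Phi_w(b,w),v\rangle=\langle I'(\eta(b,w)),\eta(b,v)\rangle .
\]
Substituting the formula for $I'$ and changing variables $x\mapsto e^{-b}x$ yields the explicit expression \eqref{2-14}. The logarithmic terms produce $\ln|x-y|-b$ in the convolution and $\ln w^2+2ab$ in the local nonlinearity.

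Continuity of $(b,w)\mapsto\Phi_w(b,w)$ in $E'$ comes from the bound
\[
\|\Phi_w(b,w)-\Phi_w(b',w')\|_{E'}\le \|I'(\eta(b,w))-I'(\eta(b',w'))\|_{E'}\,C(b)+\|I'(\eta(b',w'))\|_{E'}\,\|\eta(b,\cdot)-\eta(b',\cdot)\|_{\ast}.
\]
The second term is delicate, because $\eta(b,\cdot)$ need not be operator-norm continuous in $b$. I would avoid this issue by working directly with the explicit formula \eqref{2-14}. Every coefficient there is a smooth function of $b$. The remaining integrals are continuous in $(b,w)$, uniformly in $\|v\|_E\le1$. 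This uses \eqref{w1}, \eqref{1-7} and the $L^2$-convergence of $w\ln w^2$ shown in Lemma \ref{lemma 2}. For the potential term we use the estimate $|V(x/e^b)-V(x/e^{b'})|\le \kappa|b-b'|\max\{e^{\kappa|b|},e^{\kappa |b'|}\}V(x)$. This follows from $(V_2)$ and \eqref{3.2} via the mean value theorem along the ray $s\mapsto V(e^{-s}x)$.

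\emph{Step 2: the $b$-derivative.} In \eqref{a} every term has the form (smooth function of $b$)$\times$(a quantity independent of $b$), with one exception: $\int V(e^{-b}x)w^2dx$. Its derivative is obtained by differentiating under the integral sign. Indeed,
\[
\frac{d}{db}V(e^{-b}x)=-\nabla V(e^{-b}x)\cdot e^{-b}x ,
\]
and by $(V_2)$ and \eqref{3.2} this is bounded by $\kappa e^{\kappa|b|}V(x)$ uniformly for $b$ in compact sets. Since $Vw^2\in L^1$, dominated convergence applies and gives the $\nabla V$ term in \eqref{2-13}. Collecting the products (e.g.\ $\frac{d}{db}[-ab e^{2b(a-1)}]=-(a+2ab(a-1))e^{2b(a-1)}$, and similarly for the $\lambda b e^{4b(a-1)}$ term) yields \eqref{2-13} after simplification. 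The same domination shows that $\Phi_b$ is jointly continuous in $(b,w)$, using the continuity of each integral in $w$ on $E$ established in Section \ref{s2}.

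\emph{Main obstacle.} The step I expect to be hardest is the joint continuity involving the potential term, i.e.\ showing that $w\mapsto\int(\nabla V(e^{-b}x)\cdot e^{-b}x)w^2dx$ and $\int V(e^{-b}x)vw\,dx$ depend continuously on $b$. Everything there hinges on the domination $|\nabla V(y)\cdot y|\le\kappa V(y)$ together with the scaling bound \eqref{3.2}. Once that is in place, the rest is routine bookkeeping.
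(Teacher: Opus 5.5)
Your proposal is correct and follows essentially the same route as the paper: $\Phi_w$ comes from linearity of $\eta(b,\cdot)$ and the chain rule, and $\Phi_b$ from term-by-term differentiation of \eqref{a}, with the only nontrivial term $\int_{\mathbb{R}^2}V(x/e^b)w^2dx$ handled by the domination coming from $(V_2)$ and \eqref{3.2}; continuity of both partials is likewise reduced to the potential terms in the explicit formulas. Your mean-value bound $|V(x/e^b)-V(x/e^{b'})|\le \kappa|b-b'|\max\{e^{\kappa|b|},e^{\kappa|b'|}\}V(x)$ is a slightly cleaner substitute for the paper's dominated-convergence arguments around \eqref{a5} and \eqref{3.17}, and once you work from \eqref{2-14} the joint continuity of $\eta$ from your Step 1 is not actually needed.
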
	
\noindent   {\it Proof.} %By  $(V_1)$, we obtain 
Since the functional $I$ is well-defined on $E$, the functional $\Phi$ is well-defined on $\tilde E$.

In the following, we will prove $\Phi\in C( \tilde E,\mathbb{R})$.
By Lemma \ref{lemma 2}, we only need to prove %$\int_{\mathbb{R}^2} V\left(\frac{x}{e^{b}} \right)w^2dx \in C( \tilde E, \mathbb{R})$.
\begin{align}\label{3.7}
    \int_{\mathbb{R}^2} V\left(\frac{x}{e^{b}} \right)w^2dx \in C( \tilde E, \mathbb{R}).
\end{align}
Assume $(b_n, w_n)\rightarrow (b,w)$ in $ \tilde E$ as $n\rightarrow\infty$. It suffices to show that 
\begin{align}\label{a1}
	\int_{\mathbb{R}^2} V\left(\frac{x}{e^{b_n}} \right)w_n^2dx \rightarrow	\int_{\mathbb{R}^2} V\left(\frac{x}{e^{b}} \right)w^2dx, \quad \mathrm{as}\quad n\rightarrow\infty.
\end{align}
Note that 
\begin{align}\label{a2}
	\notag&\left| 	\int_{\mathbb{R}^2} V\left(\frac{x}{e^{b_n}} \right)w_n^2dx -	\int_{\mathbb{R}^2} V\left(\frac{x}{e^{b}} \right)w^2dx\ \right|\\
    %&=\left|\int_{\mathbb{R}^2} V\left(\frac{x}{e^{b_n}}\right)\left(w_n^2-w^2 \right)dx-\int_{\mathbb{R}^2} w^2\left[ V\left(\frac{x}{e^{b_n}} \right)-V\left(\frac{x}{e^{b}} \right)\right] dx \ \right|\\
    \leq &\left|\int_{\mathbb{R}^2} V\left(\frac{x}{e^{b_n}}\right)\left(w_n^2-w^2 \right)dx\ \right|+\left|\int_{\mathbb{R}^2} w^2\left[ V\left(\frac{x}{e^{b_n}} \right)-V\left(\frac{x}{e^{b}} \right)\right]dx\ \right|.
\end{align}
On the one hand, by  \eqref{3.2} and the H\"{o}lder inequality, we obtain
\begin{align}%\label{a3}
	\left|\int_{\mathbb{R}^2} V\left(\frac{x}{e^{b_n}}\right)\left(w_n^2-w^2 \right)dx\right|\notag
    &\leq \max\{e^{\kappa b_n }, e^{-\kappa b_n}\} \int_{\mathbb{R}^2}V(x)\left|\left(w_n-w \right)\left(w_n+w \right) \right| dx\\\notag
    &\leq C \left( \int_{\mathbb{R}^2}V(x)\left(w_n-w \right)^2dx\right)^{\frac{1}{2}}\left( \int_{\mathbb{R}^2} V(x)\left( w_n+w\right)^2dx\right)^{\frac{1}{2}}\\
    &\leq C||w_n-w||_E ||w_n+w||_E=o_n(1).
\end{align}
On the other hand, using  \eqref{3.2}, we get 
\begin{align}
	\left|w^2\left[V\left(\frac{x}{e^{b_n}} \right)-V\left(\frac{x}{e^{b}} \right) \right]\ \right|\notag&\leq w^2V\left(\frac{x}{e^{b_n}} \right) +w^2V\left( \frac{x}{e^b}\right) \\\notag&\leq \max\{e^{\kappa b_n }, e^{-\kappa b_n}\} V(x)w^2+ \max\{e^{-\kappa b }, e^{-\kappa b}\} V(x)w^2\\\notag&\leq C V(x)w^2\in L^1(\mathbb{R}^2).
\end{align}
%where $C_1=\max\{\max\{e^{\kappa b_n }, e^{-\kappa b_n}\},\max\{e^{\kappa b }, e^{-\kappa b}\}\}$.
Thus, by Lebesgue's  dominated convergence Theorem, we deduce
\begin{align}\label{a5}
	\lim_{n\to \infty}\int_{\mathbb{R}^2}	w^2\left[V\left(\frac{x}{e^{b_n}} \right)-V\left(\frac{x}{e^{b}} \right) \right] \ dx=0.
\end{align}
It follows from \eqref{a2}-\eqref{a5} that \eqref{a1} holds, then $\Phi\in C(\tilde E,\mathbb{R}).$ 

\vspace{.2cm}
Next, we prove that \eqref{2-13} holds. It suffices to prove
\begin{align}\label{a6}
	\partial _b\int_{\mathbb{R}^2} V\left(\frac{x}{e^b} \right) w^2dx=-\int_{\mathbb{R}^2}\left[ \nabla V \left(\frac{x}{e^b} \right)\cdot \frac{x}{e^b} \right] w^2dx.
\end{align}
For $|\theta|<1$, it follows from the Newton-Leibniz formula and  Fubini's Theorem that
\begin{align}\label{a7}
	\frac{1}{\theta}\int_{\mathbb{R}^2} \left[ V\left(\frac{x}{e^{b+\theta}} \right) -V\left( \frac{x}{e^b}\right) \right] w^2dx\notag&=-\int_{\mathbb{R}^2}\int_0^1\left( \nabla V\left(\frac{x}{e^{b+\theta t}} \right)\cdot \frac{x}{e^{b+\theta t}} \right)w^2dtdx \\&=-\int_0^1\int_{\mathbb{R}^2}\left( \nabla V\left(\frac{x}{e^{b+\theta t}} \right)\cdot \frac{x}{e^{b+\theta t}} \right)w^2dxdt.
\end{align}
Using \eqref{3.2}, we deduce
\begin{align}
	\left|\int_{\mathbb{R}^2}\left( \nabla V\left(\frac{x}{e^{b+\theta t}} \right)\cdot \frac{x}{e^{b+\theta t}} \right)w^2dx\right|\notag&\leq\int_{\mathbb{R}^2}\left|\left( \nabla V\left(\frac{x}{e^{b+\theta t}} \right)\cdot \frac{x}{e^{b+\theta t}} \right)w^2\right|dx\\\notag&\leq\kappa\int_{\mathbb{R}^2}V\left(\frac{x}{e^{b+\theta t}} \right)w^2dx\\
    \notag&\leq\kappa\max\{e^{\kappa b+\kappa \theta t}, e^{-\kappa b-\kappa\theta t }\} \int_{\mathbb{R}^2} V(x)w^2dx\\
    \notag&\leq C\max\{e^{\kappa b+\kappa t}, e^{-\kappa b+\kappa t }\} 
    \in L^1{((0,1))}.
\end{align}
Then, by Lebesgue's  dominated convergence Theorem, we obtain
\begin{align}\label{a8}
\lim_{\theta\rightarrow 0}\int_0^1\int_{\mathbb{R}^2}\left( \nabla V\left(\frac{x}{e^{b+\theta t}} \right)\cdot \frac{x}{e^{b+\theta t}} \right)w^2dxdt=\int_0^1\lim_{\theta\rightarrow 0}\int_{\mathbb{R}^2}\left( \nabla V\left(\frac{x}{e^{b+\theta t}} \right)\cdot \frac{x}{e^{b+\theta t}} \right)w^2dxdt.
\end{align}
For fixed $t\in (0,1)$, it follows $(V_2)$ and \eqref{3.2} that
\begin{align}\label{a9}
	&\left|\ \left( \nabla V\left(\frac{x}{e^{b+\theta t}} \right)\cdot \frac{x}{e^{b+\theta t}} \right)w^2\ \right|
   % \notag&\leq\left|\ \left( \nabla V\left(\frac{x}{e^{b+\theta t}} \right)\cdot \frac{x}{e^{b+\theta t}} \right)\ \right|w^2\\
    \notag\leq \kappa V\left( \dfrac{x}{e^{b+\theta t}}\right)w^ 2\\
    \notag\leq &\kappa\max\{e^{\kappa b+ \kappa \theta t},e^{-\kappa b-\kappa\theta t}\} V(x)w^2
    \leq CV(x)w^2
    \in L^1(\mathbb{R}^2).
\end{align}
Then, using Lebesgue's dominated convergence  Theorem once again, we have 
 \begin{align}
 	\lim_{\theta\rightarrow 0}\int_{\mathbb{R}^2}\left( \nabla V\left(\frac{x}{e^{b+\theta t}} \right)\cdot \frac{x}{e^{b+\theta t}} \right)w^2dx
    %\notag&=\int_{\mathbb{R}^2}\lim_{\theta\rightarrow 0}\left( \nabla V\left(\frac{x}{e^{b+\theta t}} \right)\cdot \frac{x}{e^{b+\theta t}} \right)w^2dxdt\\
    =\int_{\mathbb{R}^2}\left[\nabla V\left( \dfrac{x}{e^b}\right)\cdot \dfrac{x}{e^b}  \right]w^2 dx.
 \end{align}
Thus, combining \eqref{a7}-\eqref{a9}, we deduce
\begin{align}
	\partial_b\int_{\mathbb{R}^2} V\left( \dfrac{x}{e^b}\right) w^2dx\notag&=\lim_{\theta\rightarrow 0}\frac{1}{\theta}\int_{\mathbb{R}^2} \left[ V\left(\frac{x}{e^{b+\theta}} \right)- V\left( \frac{x}{e^b}\right)  \right]w^2 dx \\\notag&=-\lim_{\theta\rightarrow 0}\int_0^1\int_{\mathbb{R}^2}\left[ \nabla V\left(\dfrac{x}{e^{b+\theta t}} \right) \cdot \frac{x}{e^{b+\theta t}}\right] w^2dxdt \\\notag&=-\int_0^1\lim_{\theta\rightarrow 0}\int_{\mathbb{R}^2}\left[ \nabla V\left(\dfrac{x}{e^{b+\theta t}} \right) \cdot \frac{x}{e^{b+\theta t}}\right] w^2dxdt\\\notag&=-\int_0^1\int_{\mathbb{R}^2}\left[ \nabla V\left( \frac{x}{e^b}\right)\cdot\frac{x}{e^b} \right]w^2dx\\\notag&= -\int_{\mathbb{R}^2}\left[ \nabla V\left( \frac{x}{e^b}\right)\cdot\frac{x}{e^b} \right]w^2dx,
\end{align}
which means that \eqref{a6} holds.
Similar to \eqref{3.7}, we derive $$\int_{\mathbb{R}^2}\left[ \nabla V\left( \frac{x}{e^b}\right)\cdot \dfrac{x}{e^b} \right]w^2dx\in C(\tilde E,\mathbb{R}). $$
Thus, $\Phi_b(b,w)\in C(\tilde E, \mathbb{R})$.

\vspace{.2cm}
In the following, we prove \eqref{2-14} and $\Phi_w(b,w)\in C(\tilde E,E')$. Since the  map $w\mapsto \eta (b,w)$ is linear for fixed $b\in \mathbb{R}$, we obtain 
$$\langle \Phi_w (b,w),v\rangle =\langle I'(\eta(b,w)), \eta(b,v)\rangle , \quad \ b\in \mathbb{R} \ \mathrm{and}\ w,v\in E.$$
Using \eqref{a}, we obtain
\begin{align}
	\langle \Phi_w(b,w),v\rangle \notag&=e^{2ab}\int_{\mathbb{R}^2}\nabla v\nabla wdx+e^{2b(a-1)}\int_{\mathbb{R}^2}V\left(\frac{x}{e^b} \right)vwdx-2abe^{2b(a-1)}\int_{\mathbb{R}^2}vwdx \\\notag&\quad +\lambda e^{4b(a-1)}\int_{\mathbb{R}^2}\int_{\mathbb{R}^2}\ln |x-y|w^2(y)w(x)v(x)dxdy\\\notag&\quad-\lambda be^{4b(a-1)}\int_{\mathbb{R}^2}w^2(y)dy\int_{\mathbb{R}^2}v(x)w(x)dx-e^{2b(a-1)}\int_{\mathbb{R}^2}vw\ln w^2dx.
\end{align}

It remains to prove $\Phi_w(b,w)\in C(\tilde E,E')$. We only need to prove $\int_{\mathbb{R}^2}V\left(\frac{x}{e^b} \right)vwdx\in C(\tilde E,E') $. 
Assume $(b_n,w_n)\rightarrow (b,w)$ in $\tilde E$ as \( n\rightarrow\infty \). It suffices to show that
\begin{align}\label{3.16}
    \int_{\mathbb{R}^2} V\left(\dfrac{x}{e^{b_n}}\right) w_nvdx-  \int_{\mathbb{R}^2} V\left(\dfrac{x}{e^{b}}\right)wvdx=o_n(1)||v||_E.
\end{align}
Note that 
\begin{align}\label{3.16.1}
	\notag&\left| \int_{\mathbb{R}^2} V\left(\frac{x}{e^{b_n}} \right)w_nv dx- \int_{\mathbb{R}^2} V\left(\frac{x}{e^{b}} \right)wv dx \ \right|\\
    \leq&\left|\int_{\mathbb{R}^2} V\left( \dfrac{x}{e^{b_n}}\right) \left[w_n-w \right]vdx\ \right|
    +\left|\int_{\mathbb{R}^2}\left[ V\left(\dfrac{x}{e^{b_n}} \right)-V\left(\frac{x}{e^b} \right)  \right] wvdx\ \right|.
\end{align}
Moreover, using \eqref{3.2} and the H\"{o}lder inequality, we deduce
\begin{align}\label{a14}
	\left|\int_{\mathbb{R}^2} V\left( \dfrac{x}{e^{b_n}}\right) \left[w_n-w \right]vdx \right|
    \notag&\leq \max\{e^{\kappa b_n, -\kappa b_n}\}\int_{\mathbb{R}^2} V(x)|\left[w_n-w \right] v|dx\\
    \notag&\leq C\left( \int_{\mathbb{R}^2} V(x)\left[ w_n-w\right] ^2dx\right) ^{\frac{1}{2}}\left( \int_{\mathbb{R}^2} V(x)v ^2dx\right)^{\frac{1}{2}} \\
    &\leq C||w_n-w||_E||v||_E=o_n(1)||v||_E.
\end{align}
Analogous to \eqref{a5}, we have 
\begin{align*}
    \lim_{n\to \infty}\int_{\mathbb{R}^2}	w^2\left|V\left(\frac{x}{e^{b_n}} \right)-V\left(\frac{x}{e^{b}} \right) \right| \ dx=0,
\end{align*}
which, together with \eqref{3.2} and the H\"{o}lder inequality, gives 
\begin{align}\label{3.17}
    &\left|\int_{\mathbb{R}^2}\left[ V\left(\dfrac{x}{e^{b_n}} \right)-V\left(\frac{x}{e^b} \right)  \right] wvdx\right|\nonumber\\
    \leq &\left[\int_{\mathbb{R}^2}	w^2\left|V\left(\frac{x}{e^{b_n}} \right)-V\left(\frac{x}{e^{b}} \right) \right| \ dx\right]^{\frac{1}{2}} \left[\int_{\mathbb{R}^2}	v^2\left|V\left(\frac{x}{e^{b_n}} \right)-V\left(\frac{x}{e^{b}} \right) \right| \ dx\right]^{\frac{1}{2}}\nonumber\\
    \leq &o_n(1)\left(\max\{e^{\kappa b_n,-\kappa b_n}\}+\max\{e^{\kappa b,-\kappa b}\}\right)^{\frac{1}{2}}\left[\int_{\mathbb{R}^2}	v^2V(x) \ dx\right]^{\frac{1}{2}}\nonumber\\
    =&o_n(1)||v||_E.
\end{align}
Combining \eqref{3.16.1}-\eqref{3.17}, we get \eqref{3.16}.
%It follows from Lemma \ref{lem1} that
%\begin{align}
%	\left|\ \left[ V\left(\dfrac{x}{e^{b_n}} \right)-V\left(\frac{x}{e^b} \right)  \right] wv\ \right|
%    \notag&\leq V\left(\frac{x}{e^{b_n}} \right)wv+ V\left(\frac{x}{e^{b}} \right)wv\\
%    \notag&\leq \max\{e^{\kappa b_n,-\kappa b_n}\} V(x )wv+\max\{e^{\kappa b,-\kappa b}\} V(x )wv\in L^1({E}),
%\end{align}
%then  using  Lebesgue's dominated convergence Theorem yields
%\begin{align}\label{a15}
%	\lim_{n\rightarrow\infty}\int_{\mathbb{R}^2}\left[ V\left(\dfrac{x}{e^{b_n}} \right)-V\left(\frac{x}{e^b} \right)  \right] wvdx=\int_{\mathbb{R}^2}\lim_{n\rightarrow\infty}\left[ V\left(\dfrac{x}{e^{b_n}} \right)-V\left(\frac{x}{e^b} \right)  \right] wvdx=0.
%\end{align}
%Then, Combining \eqref{a14} with \eqref{a15}, we obtain
%\begin{align}
%\notag \lim_{n\rightarrow\infty}	\int_{\mathbb{R}^2} V\left(\dfrac{x}{e^{b_n}}\right) w_nvdx= \int_{\mathbb{R}^2} V\left(\dfrac{x}{e^{b}}\right)wvdx,
%\end{align}
%which means 
So, $\Phi_w(b,w)\in C(\tilde E, E')$. We complete the proof.\hfill$\square$

\vspace{.2cm}
Now, we are ready to establish the existence of a Cerami sequence of the functional $I$ with an additional property.% and investigate $K(u_n)\rightarrow0$ as $n\rightarrow \infty$ in what follows.

\begin{lemma}\label{lemma2.9}
There exists $\{u_n\}\subset E$ such that
\begin{align}
\notag	I(u_n)\rightarrow c_{*,1}, \quad ||I'(u_n)||_{E'}(1+||u_n||_E)\rightarrow0,\quad \mathrm{and }\quad  K(u_n)\rightarrow0
\end{align}
as $n\rightarrow\infty,$ where 
\begin{align*}%\label{2-10}
	K(u)\notag&:=a\int_{\mathbb{R}^2} |\nabla u|^2dx-\int_{\mathbb{R}^2}u^2dx+(a-1)\int_{\mathbb{R}^2} V(x)u^2dx-\dfrac{1}{2}\int_{\mathbb{R}^2} (\nabla V\cdot x)u^2dx\\\notag&\quad +\lambda(a-1)\int_{\mathbb{R}^2}\int_{\mathbb{R}^2} \ln |x-y|u^2(x)u^2(y)dxdy\\&\quad-\dfrac{\lambda}{4}\left(\int_{\mathbb{R}^2} u^2dx\right) ^2-(a-1)\int_{\mathbb{R}^2} u^2\ln u^2dx.
\end{align*}		
\end{lemma}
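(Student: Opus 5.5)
We apply the general minimax principle, Lemma \ref{lemma2.5}, to the auxiliary functional $\Phi$ on $Y=\tilde E$. We take $N=[0,1]$, $N_0=\{0,1\}$, and
$$\Gamma_0:=\{\gamma_0:\{0,1\}\to \tilde E\ |\ \gamma_0(0)=(0,0),\ \Phi(\gamma_0(1))<0\},\qquad \tilde\Gamma:=\{\tilde\gamma\in C([0,1],\tilde E)\ |\ \tilde\gamma(0)=(0,0),\ \Phi(\tilde\gamma(1))<0\}.$$
Set $\tilde c:=\inf_{\tilde\gamma\in\tilde\Gamma}\max_{t\in[0,1]}\Phi(\tilde\gamma(t))$. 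Then $b=\sup_{\Gamma_0}\sup_{N_0}\Phi=0$, since $\Phi(0,0)=I(0)=0$ and $\Phi(\gamma_0(1))<0$.

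\textbf{Step 1: $\tilde c=c_{*,1}$.} If $\gamma\in\Gamma$, then $\tilde\gamma(t):=(0,\gamma(t))$ belongs to $\tilde\Gamma$, because $\eta(0,w)=w$. This gives $\tilde c\le c_{*,1}$. Conversely, for $\tilde\gamma=(\beta,w)\in\tilde\Gamma$, the path $\eta\circ\tilde\gamma$ lies in $\Gamma$: $\eta$ is continuous (by the scaling bounds \eqref{3.2}, \eqref{c1}, \eqref{c2} and the continuity of dilations in $E$), $\eta(0,0)=0$, and $I(\eta(\tilde\gamma(1)))=\Phi(\tilde\gamma(1))<0$. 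Since $\Phi=I\circ\eta$, this gives $c_{*,1}\le\tilde c$. By Lemma \ref{lemma2.6}, $\tilde c=c_{*,1}\ge m_\alpha>0=b$, so the hypothesis of Lemma \ref{lemma2.5} holds.

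\textbf{Step 2: extracting the sequence.} Take $\varepsilon_n\to0$ and choose $\gamma_n\in\Gamma$ with $\max_t I(\gamma_n(t))\le c_{*,1}+\varepsilon_n$. Put $\tilde\gamma_n=(0,\gamma_n)$, which satisfies $\max_t\Phi(\tilde\gamma_n(t))\le \tilde c+\varepsilon_n$. Apply Lemma \ref{lemma2.5} with $\delta_n=\sqrt{\varepsilon_n}$. This yields $(b_n,w_n)\in\tilde E$ with
$$\Phi(b_n,w_n)\to c_{*,1},\qquad \operatorname{dist}((b_n,w_n),\{0\}\times\gamma_n([0,1]))\le 2\sqrt{\varepsilon_n},$$
$$(1+\|(b_n,w_n)\|_{\tilde E})\,\|\Phi'(b_n,w_n)\|_{\tilde E'}\le 8\sqrt{\varepsilon_n}.$$
In particular $|b_n|\le 2\sqrt{\varepsilon_n}\to0$. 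Define $u_n:=\eta(b_n,w_n)$. Then $I(u_n)=\Phi(b_n,w_n)\to c_{*,1}$.

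\textbf{Step 3: $K(u_n)\to0$.} Comparing \eqref{2-13} with the definition of $K$, a direct change of variables shows the identity $\Phi_b(b,w)=K(\eta(b,w))$. Here one checks that $e^{2b(a-1)}\int V(x/e^b)w^2\,dx=\int V u^2$, that the gradient term rescales in the same way, and that the terms containing explicit factors $b$ recombine into $\int u^2\ln u^2$ and the logarithmic convolution of $u$. Therefore $|K(u_n)|=|\Phi_b(b_n,w_n)|\le\|\Phi'(b_n,w_n)\|_{\tilde E'}\to0$.

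\textbf{Step 4: the Cerami estimate (the main obstacle).} By \eqref{2-14}, $\langle I'(u_n),\eta(b_n,v)\rangle=\langle\Phi_w(b_n,w_n),v\rangle$. Given $\varphi\in E$, set $v:=\eta(-b_n,\varphi)$, so that $\eta(b_n,v)=\varphi$. Because $|b_n|\le1$ for large $n$, the estimates \eqref{3.2}, \eqref{c1}, \eqref{c2} give $\|v\|_E\le C\|\varphi\|_E$ and $\|u_n\|_E\le C\|w_n\|_E$, with $C$ independent of $n$. Hence
$$\|I'(u_n)\|_{E'}\le C\|\Phi_w(b_n,w_n)\|_{E'}.$$
Combining these,
$$(1+\|u_n\|_E)\|I'(u_n)\|_{E'}\le C(1+\|w_n\|_E)\|\Phi'(b_n,w_n)\|_{\tilde E'}\le C\sqrt{\varepsilon_n}\to0.$$
The care needed here is in making the dilation constants uniform in $n$, which uses $b_n\to0$ together with \eqref{3.2}.
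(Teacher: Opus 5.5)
Your proposal is correct and follows the paper's argument: you apply Lemma~\ref{lemma2.5} to $\Phi$ along the lifted paths $(0,\gamma_n)$, read off $K(u_n)\to0$ from the identity $\Phi_b=K\circ\eta$, and transfer the $\Phi_w$ bound to $I'(u_n)$ by testing with $\eta(-b_n,\varphi)$. The only difference is that you bound $\|\eta(-b_n,\varphi)\|_E$ and $\|u_n\|_E$ by constants that are uniform for $|b_n|\le 1$, whereas the paper proves the sharper $(1+o_n(1))$ asymptotics; your version suffices and is slightly simpler.
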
			
\noindent   {\it Proof.}
Denote 
\begin{align*}
    \bar{\Gamma}=\left\lbrace \bar{\gamma}\in C\left( [0,1],\tilde E \right) | \  \bar{\gamma}(0)=(0,0), \Phi(\bar{\gamma}(1))<0\right\rbrace.
\end{align*}
Since $\Gamma$ defined by \eqref{3.4} is not empty and $\{(0,\gamma)\ |\ \gamma\in \Gamma\}\subset  \bar{\Gamma}$, we obtain $\bar{\Gamma}\not=\emptyset$.
 So, we can define the mountain-pass value $$c_0:=\inf_{\bar{\gamma}\in \bar{\Gamma}}\max_{t\in[0,1]}\Phi(\bar{\gamma}(t)).$$
%where $\bar{\Gamma}=\left\lbrace \bar{\gamma}\in C\left( [0,1],\tilde E \right) | \  \bar{\gamma}(0)=(0,0), \Phi(\bar{\gamma}(1))<0\right\rbrace.$
Notice that $\Gamma=\left\lbrace \eta\circ \bar{\gamma}\ |\  \bar{\gamma}\in \bar{\Gamma}\right\rbrace $, thus $c_0=c_{*,1}\geq m_\alpha$. 

\vspace{.2cm}
Define $N=[0,1]$, $N_0=\{0,1\}$ and 
\begin{align*}
    \bar{\Gamma}_0=\left\lbrace \bar{\gamma}\in C\left( N_0,\tilde E \right) | \  \bar{\gamma}(0)=(0,0), \Phi(\bar{\gamma}(1))<0\right\rbrace.
\end{align*}
Then, 
\begin{align*}
    \sup_{\bar{\gamma}\in \bar{\Gamma}_0}\max_{t\in N_0}\Phi(\bar{\gamma}(t))
    =0<c_0.
\end{align*}
%We now apply Lemma \ref{lemma2.5} to the functional $\Phi$, taking    and replacing $E, \Gamma$ with $\tilde E, \bar{\Gamma}$.
%To be more precise, f
For any fixed $n\in \mathbb{N}$, by the definition of $c_{*,1}$, there exists $\gamma_n\in \Gamma$ such that
$$\max_{t\in[0,1]}I(\gamma_n(t))\leq c_{*,1}+\dfrac{1}{n^2}.$$
		Next, we define $\bar{\gamma}_n\in \bar{\Gamma}$	by setting $\bar{\gamma}_n(t):=(0,\gamma_n(t))$, and we obtain
\begin{align*}%\label{2.17}
	\max_{t\in [0,1]}\Phi(\bar{\gamma}_n(t))=\max_{t\in [0,1]}\Phi((0, \gamma_n(t)))=\max_{t\in [0,1]}I(\gamma_n(t))\leq c_{*,1}+\frac{1}{n^2}.
\end{align*}
%By the Lemma \ref{lemma2.6}, there exists  sufficiently small $\alpha>0$ such that 
%\begin{align}\label{2.15}
%	\Phi((b,w))=I(\eta(b,w))\geq m_{\tau}>0, 
%\end{align}
%	furthermore, using Lemma \ref{lemma2.7}, we obtain
%	\begin{align}\label{2.16}
%		\Phi(\bar{\gamma}_n(t))=I(\gamma_n(t))<0,
%	\end{align}
%	for sufficiently large	$t>0$.
Then, it follows from Lemma \ref{lemma2.5} that there exists $(b_n,w_n)\in \tilde E$ such that 
\begin{align}\label{2.18}
	\Phi(b_n, w_n)\rightarrow c_{*,1},
\end{align}
		\begin{align}\label{2.19}
			||\Phi'(b_n,w_n)||_{\tilde{E}'}\left( 1+||(b_n,w_n)||_{\tilde E}\right)\rightarrow 0, 
		\end{align}		
\begin{align*}
	dist\left( (b_n, w_n),(0,\gamma_n([0,1]))\right)\rightarrow 0,
\end{align*}		
which means $b_n\rightarrow 0$ as $n\rightarrow\infty.$		

\vspace{.2cm}
A direct calculation yields 
\begin{align}\label{3.25}
    \Phi_b(b,w)=K(\eta(b, w))\ \ \text{for}\ \ (b, w)\in \tilde E.
\end{align}
For every $(h,v)\in \tilde E$, by Lemma \ref{lemma2.15} and \eqref{3.25}, we deduce
\begin{align}\label{2-21}
	\Phi'(b_n, w_n)(h,v)%&=\left[ \Phi_{b_n}(b_n,w_n), \Phi_{w_n}(b_n,w_n)\right] (h,v)\\
    %\notag&=\Phi_{b_n}(b_n,w_n) h+\Phi_{w_n}(b_n,w_n)v\\
    =\left(K(\eta(b_n, w_n))h,\langle I'(\eta(b_n, w_n)),\eta(b_n, v)\rangle\right).
\end{align}		
Letting $h=1$ and $v=0$ in \eqref{2-21}, we get 
\begin{align}\label{r2}
	K(\eta(b_n,w_n))\rightarrow 0, \quad \mathrm{as}\quad n\rightarrow\infty.
\end{align}
Define $u_n:=\eta(b_n, w_n)$. Combining \eqref{2.18} with \eqref{r2}, we obtain
$$I(u_n)\rightarrow c_{*,1}\quad \mathrm{and }\quad  K(u_n)\rightarrow0 \quad  \mathrm{as }\  n\rightarrow\infty. $$

To conclude, for any given $w\in E$, we consider $v_n:=e^{-ab_n}w(e^{-b_n}\cdot)\in E$. %Set  $h=0$ in \eqref{2-21}, we then derive that
%$$\Phi'(b_n,w_n) (0,w_n)=I'(\eta(b_n,w_n))\eta(b_n,v).$$		
Notice that % We deduce from \eqref{2.19} that 
\begin{align}\label{2-23}
	\left( 1+||u_n||_E\right)|\langle I'(u_n),w\rangle |\notag
    &=\left( 1+||u_n||_E\right)|\langle I'(u_n),\eta(b_n, v_n)\rangle |\\
    \notag&=\left( 1+||u_n||_E\right)|\Phi'(b_n,w_n)(0,v_n)|\\
     &\leq \left( 1+||u_n||_E\right)||\Phi'(b_n,w_n)||_{\tilde E'}||v_n||_{ E}.
   % &\leq\left( 1+||(b_n, w_n)||_{\tilde E}\right)||\Phi'(b_n,w_n)||_{\tilde E'}||v_n||_{ E}\nonumber\\
   % &=o_n(1)||v_n||_{ E}. 
\end{align}
Next, we claim $||v_n||_{E}=(1+o_n(1))||w||_E$ with $o_n(1)\to 0$ uniformly in $w\in E$.
If $b_n\geq 0$, using \eqref{3.2}, we obtain
\begin{align*}%\label{b1}
	|V(e^{b_n}x)- V(x)|\notag&=\left|\int_0^{b_n}\nabla V(e^{s}x)\cdot \left( e^{s}x\right)  ds\right|\leq \kappa \int_0^{b_n} V(e^{s}x)ds\\
    & \leq \kappa \int_0 ^{b_n} e^{s \kappa} V(x)ds 
    %=V(x) \cdot e^{s\kappa}|_0^{b_n}
    =\left(e^{\kappa b_n}-1 \right) V(x)=O(|b_n|) V(x).
\end{align*}
Similarly, if $b_n\leq 0$, we have 
\begin{align*}%\label{b2}
		|V(e^{b_n}x)- V(x)|%\notag&=\left|\int^0_{b_n}\nabla V(e^{s}x)\cdot \left( e^{s}x\right)  ds\right|\leq \kappa \int^0_{b_n} V(e^{s}x)ds\\& \leq \kappa \int^0 _{b_n} e^{-s \kappa} V(x)ds =-V(x) \cdot e^{-s\kappa}|^0_{b_n}=\left(e^{-\kappa b_n}-1 \right) V(x)
        =O(|b_n|) V(x).
\end{align*}
%then, using \eqref{b1}-\eqref{b2} yields
Therefore, 
\begin{align}\label{b3}
	\left|\int_{\mathbb{R}^2} \left(V(e^{b_n}x) -V(x)\right) w^2(x)dx\right|\leq O(|b_n|)\int_{\mathbb{R}^2} V(x)w^2(x)dx =o_n(1)||w||_E^2.
\end{align}
%Furthermore,
%\begin{align}\label{b4}
%	|\ln (1+e^{2b_n}|x|)-\ln %(1+|x|)|=\left|\int_0^{b_n}\dfrac{2e^{2s}|x|}{1+e^{2s}|x|}\right|\leq \left|\int_0^{b_n}2ds\right|=2|b_n|,
%\end{align}
%then, it follows  that
 %\begin{align}\label{b5}
% \left|	\int_{\mathbb{R}^2} \left[\ln (1+e^{2b_n}|x|)-\ln (1+|x|) \right]w^2dx\right|\leq 2|b_n|\int_{\mathbb{R}^2} w^2dx =o(1)\int_{\mathbb{R}^2} w^2dx=o(1)||w||_E 
% \end{align}
Using $b_n\rightarrow 0$ and  \eqref{b3}, we deduce
\begin{align}\label{2-24}
	||v_n||_{E}^2\notag&=\int_{\mathbb{R}^2}\left[|\nabla v_n|^2+V(x)v_n^2 \right]dx \\\notag&=e^{-4b_n}\int_{\mathbb{R}^2}|\nabla w|^2dx+e^{-2b_n}\int_{\mathbb{R}^2}V\left(e^{b_n} x\right)w^2 dx\\
    \notag&=\left(1+o_n(1) \right)\int_{\mathbb{R}^2} |\nabla w|^2dx+\left(1+o_n(1) \right) \left(\int_{\mathbb{R}^2}  V(x)w^2 dx+o_n(1)||w||_E^2\right)\\
    &=(1+o_n(1))||w||_E^2.
\end{align}
Similarly, we have 
\begin{align}\label{3.30}
    ||u_n||_{E}=(1+o_n(1))||w_n||_E.
\end{align}
Combining \eqref{2.19}, \eqref{2-23}, \eqref{2-24} and \eqref{3.30}, we obtain
\begin{align*}
    \left( 1+||u_n||_E\right)|\langle I'(u_n),w\rangle |
\leq &C\left( 1+||w_n||_E\right)||\Phi'(b_n,w_n)||_{\tilde E'}||w||_E\\
\leq &C\left( 1+||(b_n, w_n)||_{\tilde E}\right)||\Phi'(b_n,w_n)||_{\tilde E'}||w||_E
=o_n(1)||w||_E,%\quad \mathrm{as}\quad n\rightarrow\infty,
\end{align*}
which implies 
\begin{align*}
    ||I'(u_n)||_{E'}(1+||u_n||_E)\rightarrow0.
\end{align*}
This completes the proof.\hfill$\square$

\vspace{.2cm}
The next lemma will be used to establish the boundedness of the Cerami sequence obtained in Lemma \ref{lemma2.9}.
\begin{lemma}\label{lemma2.10}
	Suppose that $\{u_n\}\subset E$ satisfies 
\begin{align}\label{1}
	c=\sup_{n\in \mathbb{N}} I(u_n)<+\infty, \quad ||I'(u_n)||_{E'}(1+||u_n||_E)\rightarrow0,\quad \mathrm{and }\quad  K(u_n)\rightarrow0
\end{align}
as $n\rightarrow \infty$. Then $\{u_n\}$ is bounded in $E$.
%	$$\int_{\mathbb{R}^2} \left( |\nabla u_n|^2dx+V(x)u^2_n\right) dx\leq C.$$
\end{lemma}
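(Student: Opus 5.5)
The idea is to combine $I(u_n)$ and $K(u_n)$ linearly so that the logarithmic convolution term $L(u):=\int_{\mathbb{R}^2}\int_{\mathbb{R}^2}\ln|x-y|u^2(x)u^2(y)\,dxdy$, whose sign is not controlled, cancels exactly. What remains should have nonnegative coefficients, except for the superquadratic part of the logarithmic nonlinearity, which we absorb. Since $I(u)$ contains $\frac{\lambda}{4}L(u)$ and $K(u)$ contains $\lambda(a-1)L(u)$, the right combination is $4(a-1)I(u)-K(u)$. A direct computation gives
\begin{align*}
4(a-1)I(u)-K(u)=&(a-2)\int_{\mathbb{R}^2}|\nabla u|^2dx+(2a-1)\int_{\mathbb{R}^2}u^2dx+\int_{\mathbb{R}^2}\Big[(a-1)V(x)+\tfrac12\nabla V\cdot x\Big]u^2dx\\
&+\frac{\lambda}{4}\Big(\int_{\mathbb{R}^2}u^2dx\Big)^2-(a-1)\int_{\mathbb{R}^2}u^2\ln u^2dx .
\end{align*}
By \eqref{1}, the left-hand side evaluated at $u_n$ is at most $4(a-1)c+o_n(1)$. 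So it suffices to bound the right-hand side from below by a coercive quantity in $\|u_n\|_E$. Only the bound on $I(u_n)$ and $K(u_n)\to0$ are used; the Cerami condition is not needed for boundedness itself.

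We now check the signs of the terms. Because $a>2$, the gradient coefficient $a-2$ is positive. By $(V_2)$, $\tfrac12\nabla V\cdot x\ge-\tfrac{\kappa}{2}V$, so the potential term is at least $(a-1-\tfrac\kappa2)\int V u^2\,dx$, and $a-1-\tfrac{\kappa}{2}>0$ by the choice of $a$ in Lemma \ref{lemma2.7}. The term $(2a-1)\|u\|_2^2$ is nonnegative, and since $\lambda>0$ the quartic term $\frac{\lambda}{4}\|u\|_2^4$ is positive. This sign of $\lambda$ is exactly where the hypothesis $\lambda>0$ enters.

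The only bad term is $-(a-1)\int u^2\ln u^2\,dx$, and controlling it is the main obstacle. As in \eqref{c5}, we drop the region $\{|u|\le1\}$, where the integrand is nonpositive, and use Lemma \ref{lemmaa}(ii) to get $\int u^2\ln u^2\,dx\le C\|u\|_q^q$ for a fixed $q\in(2,3)$. We then apply the two-dimensional Gagliardo--Nirenberg inequality $\|u\|_q^q\le C\|\nabla u\|_2^{q-2}\|u\|_2^2$. Young's inequality with exponents $\frac{2}{q-2}$ and $\frac{2}{4-q}$ gives
$$\|u\|_q^q\le \varepsilon\|\nabla u\|_2^2+C_\varepsilon\|u\|_2^{\frac{4}{4-q}}.$$
Since $q<3$, the exponent satisfies $\frac{4}{4-q}<4$, so $C_\varepsilon\|u\|_2^{4/(4-q)}\le \frac{\lambda}{8}\|u\|_2^4+C'$. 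Choosing $\varepsilon$ with $(a-1)C\varepsilon<\frac{a-2}{2}$ yields
$$\frac{a-2}{2}\|\nabla u_n\|_2^2+\Big(a-1-\frac\kappa2\Big)\int_{\mathbb{R}^2}V u_n^2dx+\frac{\lambda}{8}\|u_n\|_2^4\le C,$$
which bounds $\|u_n\|_E$ uniformly in $n$.

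If the Gagliardo--Nirenberg step turned out awkward, the fallback would be the same absorption done via Lemma \ref{lemma1}(ii) and interpolation between $L^2$ and $L^{q'}$ for a larger $q'$. The structural point is that the cancellation of $L(u)$ together with the positive $\lambda\|u\|_2^4$ term is what makes the argument close.
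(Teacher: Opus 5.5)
Your proof is correct. Its first half coincides with the paper's: the paper also forms $I(u_n)-\frac{1}{4(a-1)}K(u_n)$, which is your identity divided by $4(a-1)$. It also uses $(V_2)$ together with $a>1+\frac{\kappa}{2}$, keeps the positive term $\frac{\lambda}{16(a-1)}\|u_n\|_2^4$, and bounds $\int u_n^2\ln u_n^2\,dx\le C\|u_n\|_q^q$ with $q\in(2,3)$. Like yours, the paper's argument never uses the Cerami condition.

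The two proofs differ in how $\|u_n\|_q^q$ is absorbed.

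\emph{The paper's route (two stages).} It first proves that $\|\nabla u_n\|_2$ is bounded, by contradiction:
\begin{itemize}
\item it rescales $w_n(x)=s_n^2u_n(s_nx)$ with $s_n=\|\nabla u_n\|_2^{-1/2}$, so that $\|\nabla w_n\|_2=1$;
\item it applies Gagliardo--Nirenberg to $w_n$ and uses the quartic term to get $\|w_n\|_2^2=O(s_n^{6-2q})$;
\item this forces $Cs_n^4\ge\frac{a-2}{4(a-1)}+o_n(1)$, a contradiction.
\end{itemize}
Only afterwards does it apply Gagliardo--Nirenberg again, now in the form $\|u_n\|_q^q\le C\|u_n\|_2^2$, and let the quartic term bound $\|u_n\|_2$ and then $\int V u_n^2\,dx$.

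\emph{Your route (one step).} The Young inequality
$\|\nabla u\|_2^{q-2}\|u\|_2^2\le\varepsilon\|\nabla u\|_2^2+C_\varepsilon\|u\|_2^{4/(4-q)}$
does both stages at once. The restriction $q<3$ enters as $\frac{4}{4-q}<4$, instead of as $6-2q>0$ and $12-4q>0$ in the rescaling.

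\emph{Comparison.} Your version is shorter and avoids the contradiction argument. It also gives an explicit bound on $\|u_n\|_E$ in terms of $c$, $\lambda$, $a$ and $\kappa$. The paper's rescaling buys nothing extra here. Your argument also carries over directly to Lemma \ref{lemma4.5}, where the combination $I-\frac{1}{4(r+1)}T$ produces the positive quartic coefficient $-\frac{\lambda}{16(r+1)}$.
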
		
\noindent   {\it Proof.} First, using Lemma \ref{lemmaa}, we obtain
\begin{align}\label{2-25}
c+o_n(1)\notag&
\geq I(u_n)-\frac{1}{4(a-1)}K(u_n)\\
\notag&=\frac{a-2}{4(a-1)}\int_{\mathbb{R}^2}|\nabla u_n|^2dx+\frac{2a-1}{4(a-1)}\int_{\mathbb{R}^2} u_n^2dx+\frac{1}{4}\int_{\mathbb{R}^2}V(x)u_n^2dx\\
\notag&\quad+\frac{1}{8(a-1)}\int_{\mathbb{R}^2}(\nabla V\cdot x)u_n^2dx +\dfrac{\lambda}{16(a-1)}\left( \int_{\mathbb{R}^2} u_n^2dx\right)^2-\dfrac{1}{4}\int_{\mathbb{R}^2} u_n^2\ln u_n^2dx\\
\notag&\geq\frac{a-2}{4(a-1)}\int_{\mathbb{R}^2}|\nabla u_n|^2dx+ \frac{2a-1}{4(a-1)}\int_{\mathbb{R}^2} u_n^2dx+\frac{2a-2-\kappa}{8(a-1)}\int_{\mathbb{R}^2}V(x)u_n^2dx\\&\quad +\dfrac{\lambda}{16(a-1)}\left( \int_{\mathbb{R}^2} u_n^2dx\right)^2-C\int_{\mathbb{R}^2} |u_n|^qdx,
\end{align}
where $2<q<3.$

In the following, we claim
\begin{align}\label{2-26}
\int_{\mathbb{R}^2}|\nabla u_n|^2dx\leq C, \quad \mathrm{for} \quad n\in \mathbb{N}.
\end{align} 
Suppose, for contradiction, that the claim fails. Upon passing to a subsequence if necessary, we may assume that 
\begin{align}
	\notag\int_{\mathbb{R}^2}|\nabla u_n|^2dx\rightarrow \infty, \quad \mathrm{as} \quad n\rightarrow \infty. 
\end{align}
We set $s_n:=||\nabla u_n||^{{-\frac{1}{2}}}_2$ for $n\in \mathbb{N}$, which means $s_n\rightarrow 0$ as $n\rightarrow\infty$. %We then introduce the rescaled functions  $w_n\in E$ via $w_n(x)=s_n^2 u_n(s_n x) $. 
Define $w_n(x):=s_n^2 u_n(s_n x) \in E$.
It follows that
\begin{align}\label{2-28}
	\int_{\mathbb{R}^2}|\nabla w_n|^2dx=s_n^4\int_{\mathbb{R}^2} |\nabla u_n|^2dx=1
\end{align}
and
\begin{align}\label{2-29}
\int_{\mathbb{R}^2}|w_n|^qdx=s_n^{2q}\int_{\mathbb{R}^2}\left| u_n(s_nx)\right| ^qdx=s_n^{2q-2}\int_{\mathbb{R}^2}|u_n|^qdx.
\end{align}
Using the Gagliardo-Nirenberg inequality, we deduce
\begin{align}\label{3.35}
\int_{\mathbb{R}^2}|w_n|^qdx\leq C \int_{\mathbb{R}^2}w_n^2dx\left( \int_{\mathbb{R}^2}|\nabla w_n|^2dx\right) ^{\frac{q-2}{2}}=C\int_{\mathbb{R}^2}w_n^2dx.
\end{align}
Multiplying both sides of \eqref{2-25} by  $s_n^4$ and invoking \eqref{2-29}, we derive
\begin{align}\label{3.37.1}
c	{s_n^4}+o\left( s_n^4\right) \notag&
\geq\frac{(a-2)s_n^4}{4(a-1)}\int_{\mathbb{R}^2}|\nabla u_n|^2dx+\dfrac{(2a-1)s_n^4}{4(a-1)}\int_{\mathbb{R}^2}u_n^2dx-{Cs_n^{4}}\int_{\mathbb{R}^2}|u_n|^qdx\\
\notag&\quad +\dfrac{(2a-2-\kappa)s^4_n}{8(a-1)}\int_{\mathbb{R}^2}V\left(x \right)u_n^2dx+\frac{\lambda s_n^4}{16(a-1)}\left( \int_{\mathbb{R}^2} u_n^2dx\right)^2 \\
\notag&\geq\frac{a-2}{4(a-1)}+\dfrac{(2a-1)s_n^2}{4(a-1)} \int_{\mathbb{R}^2} w_n^2dx+\dfrac{(2a-2-\kappa)s_n^2}{8(a-1)}\int_{\mathbb{R}^2} V\left( s_n x\right)w_n^2dx \\
\notag&\quad +\dfrac{\lambda}{16(a-1)}\left(\int_{\mathbb{R}^2} w_n^2dx \right)^2-{C s_n^{6-2q}}\int_{\mathbb{R}^2} w_n^2dx\\
%\notag&\geq\dfrac{3s_n^2}{4} \int_{\mathbb{R}^2} w_n^2dx +\dfrac{\left(2-\kappa\right)  s_n^2}{8}\int_{\mathbb{R}^2} V\left(s_n x  \right)w_n^2dx\\
%    \notag&\quad +\dfrac{\lambda}{16}\left(\int_{\mathbb{R}^2} w_n^2dx \right)^2-{C s_n^{6-2q}}\int_{\mathbb{R}^2} w_n^2dx	\\
&\geq %C s_n^2\int_{\mathbb{R}^2}w_n^2dx+
    \dfrac{\lambda}{16(a-1)}\left( \int_{\mathbb{R}^2} w_n^2dx\right)^2-{Cs_n^{6-2q}}\int_{\mathbb{R}^2} w_n^2dx
\end{align}
which implies 
\begin{align}
	\int_{\mathbb{R}^2} w_n^2dx=&O\left(s_n^{6-2q} \right),\label{2-32}\\
    s_n^2\int_{\mathbb{R}^2} V\left( s_n x\right)w_n^2dx
    \leq &Cs_n^4+{Cs_n^{6-2q}}\int_{\mathbb{R}^2} w_n^2dx
    =O\left(s_n^{12-4q} \right).\label{3.37}
   %s_n^2\int_{\mathbb{R}^2}\left(\nabla V\left(s_n x \right) \cdot s_nx \right)w_n^2dx
  % \leq &Cs_n^4+{Cs_n^{6-2q}}\int_{\mathbb{R}^2} w_n^2dx
  %  =O\left(s_n^{12-4q} \right)
\end{align}
%where we use $(V_3)$.
It follows from \eqref{3.37.1}-\eqref{3.37} that 
\begin{align*}
   C s_n^4\geq \frac{a-2}{4(a-1)}+o_n(1),
\end{align*}
which is a contradiction. 
Therefore, \eqref{2-26} holds.

\vspace{.1cm}
It follows from the Gagliardo-Nirenberg inequality and \eqref{2-26} that 
\begin{align*}
    \int_{\mathbb{R}^2}|u_n|^qdx\leq C \int_{\mathbb{R}^2}u_n^2dx\left( \int_{\mathbb{R}^2}|\nabla u_n|^2dx\right) ^{\frac{q-2}{2}}\leq C\int_{\mathbb{R}^2}u_n^2dx,
\end{align*}
which, together with \eqref{2-25}, yields 
\begin{align*}
    c+o_n(1)
    \geq &\frac{a-2}{4(a-1)}\int_{\mathbb{R}^2}|\nabla u_n|^2dx+ \frac{2a-1}{4(a-1)}\int_{\mathbb{R}^2} u_n^2dx+\frac{2a-2-\kappa}{8(a-1)}\int_{\mathbb{R}^2}V(x)u_n^2dx\\
    &+\dfrac{\lambda}{16(a-1)}\left( \int_{\mathbb{R}^2} u_n^2dx\right)^2-C\int_{\mathbb{R}^2} u_n^2dx.
\end{align*}
Hence, 
\begin{align*}
    \int_{\mathbb{R}^2} u_n^2dx\leq C\ \ \text{and}\ \ \int_{\mathbb{R}^2}V(x)u_n^2dx\leq C.
\end{align*}
So, $\{u_n\}$ is bounded in $E$. This completes the proof.
	%Moreover, it follows from \eqref{1} that
%	\begin{align}
	%\notag	\langle I'(u_n),u_n\rangle -K(u_n)=-\int_{\mathbb{R}^2} |\nabla u_n|^2dx+\int_{\mathbb{R}^2}u^2dx+\frac{\lambda}{4}\left(\int_{\mathbb{R}^2} u_n^2 dx\right)^2+\dfrac{1}{2}\int_{\mathbb{R}^2} \left( \nabla V\cdot x\right) u_n^2dx,
%	\end{align}
%	which means
	%\begin{align}
	%\notag	\frac{\lambda}{4}\left(\int_{\mathbb{R}^2} u_n^2 dx\right)^2+\int_{\mathbb{R}^2} u^2dx+\dfrac{1}{2}\int_{\mathbb{R}^2} \left( \nabla V\cdot x\right) u_n^2dx=\langle I'(u_n),u_n\rangle -K(u_n)+\int_{\mathbb{R}^2}|\nabla u_n|^2dx\leq o(1)+C_1,
%	\end{align}	
	%	then, $u_n$ is bounded in $L^2(\mathbb{R}^2)$, then $(u_n)$ is bounded in $H^1(\mathbb{R}^2)$.
%Moreover, using 	 \eqref{1-7}, we deduce
%\begin{align}\label{8}
%	\frac{1}{2}\int_{\mathbb{R}^2} V(x)u_n^2dx &\notag=I(u_n)-\frac{1}{2}\int_{\mathbb{R}^2}\left( |\nabla u_n|^2+u^2_n\right) dx\\\notag&\quad+\frac{1}{2}\int_{\mathbb{R}^2}u_n^2\ln u_n^2dx-\frac{\lambda}{4}\int_{\mathbb{R}^2}\int_{\mathbb{R}^2} \ln |x-y|u_n^2(x)u_n^2(y)dxdy\\\notag&\leq I(u_n)+\dfrac{1}{2}\int_{\mathbb{R}^2}|u_n|^qdx+\frac{\lambda}{4}\int_{\mathbb{R}^2}\int_{\mathbb{R}^2}\ln  \left(1+\dfrac{1}{|x-y|} \right) u_n^2(x)u_n^2(y)dxdx\\&\leq c+C_1+C_2||u_n||_{\frac{8}{3}}^2\leq C.
%\end{align}
%		Combining  \eqref{2-26} with \eqref{8}, $u_n$ is bounded in  $E$. This completes the proof. 
\hfill$\square$

   \vspace{.2cm}
        In the following, we will establish the compactness for the sequence satisfying \eqref{1}.
		
	\begin{lemma}\label{lemma2.11}
		Let $\{u_n\}\subset E$ be a sequence satisfying \eqref{1}, and assume $\liminf_{n\to \infty}||u_n||_E>0$. Then, %one of the following occurs:
		%\\(i) $||u_n||_E\rightarrow 0$ and $I(u_n)\rightarrow 0$ as $n\rightarrow\infty$.
		there exists $u\in E\backslash\{0\}$ such that $u_n\rightarrow u$ in $E$ as $n\rightarrow\infty$,
and $u$ is a solution of equation \eqref{1-4}.
\end{lemma}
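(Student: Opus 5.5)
By Lemma \ref{lemma2.10}, $\{u_n\}$ is bounded in $E$. Passing to a subsequence, $u_n\rightharpoonup u$ in $E$. By Lemma \ref{lemma1}(ii), $u_n\to u$ in $L^q(\mathbb{R}^2)$ for every $q\ge 2$, and $u_n\to u$ a.e. I would also like strong convergence in $L^{2-\varepsilon_0}$, with $\varepsilon_0=\frac{\gamma}{\gamma+2}$. For this I would interpolate between $L^{2-\varepsilon_1}$, where the sequence is bounded for some $\varepsilon_1\in(\varepsilon_0,\frac{2\gamma}{\gamma+2})$ by Lemma \ref{lemma1}(iii), and $L^2$, where it converges strongly. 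Together with \eqref{2.8} and \cite[Theorem A.4]{W}, as in Lemma \ref{lemma 2}, this gives $\int u_n^2\ln u_n^2\to\int u^2\ln u^2$ and $\int (u_n\ln u_n^2)\varphi\to \int (u\ln u^2)\varphi$. The $D_2$ part of the convolution converges by \eqref{1-7}, since $u_n^2\to u^2$ in $L^{4/3}$ (this follows from $L^{8/3}$ convergence).

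\emph{$u$ is a solution.} Fix $\varphi\in E$ and pass to the limit in $\langle I'(u_n),\varphi\rangle\to0$. The linear terms converge by weak convergence, the $D_2$ term by HLS, and the logarithmic term as above. For the $D_1$ term $D_1(u_n^2,u_n\varphi)$, I would write $u_n^2=u^2+(u_n-u)(u_n+u)$ and $u_n\varphi=u\varphi+(u_n-u)\varphi$. The resulting cross terms vanish by Lemma \ref{lemma2.3}, applied with bounded sequences and $z=\varphi$ or $z=u_n+u$ (using symmetry of $D_1$). Hence $\langle I'(u),\varphi\rangle=0$.

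\emph{Strong convergence.} I would compute
\[
o_n(1)=\langle I'(u_n)-I'(u),u_n-u\rangle=\|u_n-u\|_E^2+\lambda\big[D_1(u_n^2,u_n(u_n-u))-D_1(u^2,u(u_n-u))\big]-\lambda[\text{$D_2$ terms}]-\int(u_n\ln u_n^2-u\ln u^2)(u_n-u).
\]
The $D_2$ terms and the logarithmic term are $o_n(1)$ by the strong $L^q$ and $L^{2-\varepsilon_0}$ convergence, via Hölder. For the logarithmic term I would use $|s\ln s^2|\le C(|s|^{1-\varepsilon_0/2}+|s|^{3/2})$, so that $u_n\ln u_n^2$ is bounded in $L^2$ while $u_n-u\to0$ in $L^2$.

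For $D_1$, there is the difficulty that $D_1(u_n^2,(u_n-u)^2)$ is nonnegative but need not tend to zero by Lemma \ref{lemma2.3}. If $\lambda>0$ it carries the right sign and can simply be dropped from a lower bound, so
\[
\|u_n-u\|_E^2+\lambda D_1(u_n^2,(u_n-u)^2)\le o_n(1),
\]
the remaining terms $D_1(u_n^2,u(u_n-u))$ and $D_1(u^2,u(u_n-u))$ being $o_n(1)$ by Lemma \ref{lemma2.3}. If instead $\lambda$ may be negative, I would bound this term by \eqref{w1}:
\[
D_1(u_n^2,(u_n-u)^2)\le C\|u_n\|_2^2\int V(u_n-u)^2+C\|u_n-u\|_2^2\int Vu_n^2,
\]
where the second piece is $o_n(1)$. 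The first piece is not obviously small, and this is the main obstacle. One fix is to use the coercivity of $V$: I would split $\int V(u_n-u)^2$ into $|x|\le R$ and $|x|>R$, and use $\ln(1+|x|)/V(x)\to0$ from the proof of Lemma \ref{lemma1}, so that the weight $\ln(1+|x|)$ in \eqref{w1} is $o(1)\,V$ outside a large ball. This makes the first piece $\le \delta C\|u_n-u\|_E^2+o_n(1)$ for $R$ large, which can then be absorbed. Either way, $\|u_n-u\|_E\to0$.

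\emph{Nontriviality.} Strong convergence gives $\|u\|_E=\lim\|u_n\|_E\ge\liminf_{n\to\infty}\|u_n\|_E>0$, so $u\neq0$.
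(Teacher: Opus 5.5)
Your proof is correct and essentially follows the paper's route. You get boundedness from Lemma \ref{lemma2.10}, test with $u_n-u$, dispose of the logarithmic and $D_2$ terms via $L^2$ and $L^{8/3}$ convergence, and split the $D_1$ term with Lemma \ref{lemma2.3}, discarding $\lambda D_1(u_n^2,(u_n-u)^2)\ge0$ because $\lambda>0$. Your coercivity fix for $\lambda<0$ mirrors what the paper does in Section \ref{s4} using $\ln^2(1+|x|)\le CV(x)$. Deducing $u\neq0$ from $\|u\|_E=\lim_{n\to\infty}\|u_n\|_E$ is shorter than the paper's separate contradiction argument.

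There is one slip, in your preliminary step showing $u$ is a solution. Lemma \ref{lemma2.3} requires a fixed $z\in E$, so it does not cover the cross term $D_1(u\varphi,(u_n+u)(u_n-u))$ with $z=u_n+u$. That term does vanish, but only via $\int_{\mathbb{R}^2}\ln(1+|x|)(u_n-u)^2dx\to0$, which follows from the coercivity observation you make later. The step is unnecessary anyway: $\langle I'(u),u_n-u\rangle\to0$ by weak convergence alone, and once $u_n\to u$ in $E$, $I'(u)=\lim_{n\to\infty}I'(u_n)=0$ follows from $I\in C^1(E,\mathbb{R})$.
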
	
\noindent   {\it Proof.}	By Lemma \ref{lemma2.10}, $\{u_n\}$  is bounded in $E$, so there exists $u\in E$ such that $u_n\rightharpoonup  u$ in $E$. According to Lemma \ref{lemma1}, we have $u_n\rightarrow u$ in $L^s{(\mathbb{R}^2)}\left( s\geq 2\right)$.

In the following, we will prove $u\not\equiv 0$.
Assume by contradiction that $u\equiv 0$, then $u_n\rightarrow 0 $ in $L^s{(\mathbb{R}^2)}\left( s\geq 2\right)$.  Using \eqref{1} and \eqref{2.8.1}, we deduce
\begin{align}
	\notag&\int_{\mathbb{R}^2}\left[ |\nabla u_n|^2+V(x)u_n^2\right] dx+\lambda\int_{\mathbb{R}^2}\int_{\mathbb{R}^2} \ln (1+|x-y|)u_n^2(x)u_n^2(y)dxdy\\
\notag=&\langle I'(u_n),u_n\rangle +\lambda\int_{\mathbb{R}^2}\int_{\mathbb{R}^2}\ln \left(1+\dfrac{1}{|x-y|} \right)u_n^2(x)u_n^2(y)dxdy+\int_{\mathbb{R}^2}u_n^2\ln u_n^2dx\\
\notag\leq &o_n(1) +C||u_n||_{\frac{8}{3}}^4+C||u_n||_q^q\rightarrow0,\quad \mathrm{as}\quad n\rightarrow\infty,
\end{align}
which implies $||u_n||_E\rightarrow 0$ as $n\rightarrow\infty$. This contradicts $\liminf_{n\to \infty}||u_n||_E>0$. Thus, $u\not\equiv 0$.

Next, we prove $u_n\rightarrow u$ in $E$ as $n\rightarrow\infty$.  By \eqref{1}, we deduce
\begin{align}\label{2-38}
	\notag o_n(1)&=\langle I'(u_n),u_n-u\rangle \\
    \notag&=\int_{\mathbb{R}^2}\left[  \nabla u_n\nabla(u_n-u)+ V(x)u_n(u_n-u)\right] dx-\int_{\mathbb{R}^2} u_n(u_n-u)\ln u_n^2dx\\
    \notag&\quad+\lambda\int_{\mathbb{R}^2}\int_{\mathbb{R}^2} \ln|x-y|u_n^2(y)u_n(x)(u_n(x)-u(x))dxdy\\
    %\notag&=||u_n||^2_E-||u||^2_E+\lambda\int_{\mathbb{R}^2}\int_{\mathbb{R}^2} \ln |x-y|u_n^2(y)u_n(x)(u_n(x)-u(x))dxdy+o_n(1)\\
    \notag&=||u_n||^2_E-||u||^2_E+\lambda\int_{\mathbb{R}^2}\int_{\mathbb{R}^2}\ln\left(1+|x-y| \right)u_n^2(y)u_n(x)(u_n(x)-u(x))dxdy\\
    &\quad-\lambda\int_{\mathbb{R}^2}\int_{\mathbb{R}^2}\ln\left( 1+\dfrac{1}{|x-y|}\right)u_n^2(y)u_n(x)(u_n(x)-u(x))dxdy+o_n(1),
\end{align}
where we used
\begin{align}\label{e1}
\left|	\int_{\mathbb{R}^2}u_n(u_n-u)\ln u_n^2dx\right|\notag&\leq C\int_{\mathbb{R}^2} |u_n-u|(|u_n|^{1-\frac{\varepsilon_0}{2}}+|u_n|^{1+\frac{\varepsilon_0}{2}})dx\\\notag&\leq C\left(||u_n-u||_{L^2(\mathbb{R}^2)} ||u_n||_{L^{2-\varepsilon_0}(\mathbb{R}^2)}^{\frac{2-\varepsilon_0}{2}}+||u_n-u||_{L^2(\mathbb{R}^2)}||u_n||_{L^{2+\varepsilon_0}(\mathbb{R}^2)}^{\frac{2+\varepsilon_0}{2}}\right)\\ 
&\leq C||u_n-u||_{L^2(\mathbb{R}^2)} \left(||u_n||_E^{\frac{2-\varepsilon_0}{2}}+||u_n||_E^{\frac{2+\varepsilon_0}{2}}\right) 
=o_n(1).
\end{align}
Here $\varepsilon_0 := \frac{\gamma}{\gamma+2}$.
It follows from \eqref{1-7} that
\begin{align}\label{3.41}
	\notag&\left|\int_{\mathbb{R}^2}\int_{\mathbb{R}^2}\ln \left(1+\dfrac{1}{|x-y|} \right) u_n^2(y)u_n(x)(u_n(x)-u(x))dxdy\right|\\
   % \notag&\leq\int_{\mathbb{R}^2}\int_{\mathbb{R}^2}\dfrac{1}{|x-y|}u_n^2(y)u_n(x)(u_n(x)-u(x))dxdy\\
 \leq &C||u_n||_{\frac{8}{3}}^3||u_n-u||_{\frac{8}{3}}\rightarrow 0, \quad \mathrm{as}\quad n\rightarrow\infty.
\end{align}		
Notice that
\begin{align}\
\notag&\int_{\mathbb{R}^2}\int_{\mathbb{R}^2}\ln \left( 1+|x-y|\right) u_n^2(y)u_n(x)(u_n(x)-u(x))dxdy\\
\notag=&\int_{\mathbb{R}^2}\int_{\mathbb{R}^2} \ln\left( 1+|x-y|\right)u_n^2(y)(u_n(x)-u(x))^2dxdy\\
\notag&+\int_{\mathbb{R}^2}\int_{\mathbb{R}^2} \ln\left( 1+|x-y|\right)u_n^2(y)u(x)(u_n(x)-u(x))dxdy.
\end{align}		
Due to $u_n\rightharpoonup u$ in $E$ as $n\rightarrow \infty$, it follows from Lemma \ref{lemma2.3}  that 
\begin{align}
	\notag\int_{\mathbb{R}^2}\int_{\mathbb{R}^2} \ln\left( 1+|x-y|\right)u_n^2(y)u(x)(u_n(x)-u(x))dxdy\rightarrow0, \quad \mathrm{as}\quad n\rightarrow\infty,
\end{align}
which implies 
\begin{align}\label{2-42}
\notag&	\int_{\mathbb{R}^2}\int_{\mathbb{R}^2}\ln \left( 1+|x-y|\right) u_n^2(y)u_n(x)(u_n(x)-u(x))dxdy\\
=&\int_{\mathbb{R}^2}\int_{\mathbb{R}^2} \ln\left( 1+|x-y|\right)u_n^2(y)(u_n(x)-u(x))^2dxdy+o_n(1).
\end{align}		
Combining \eqref{2-38}, \eqref{3.41} and \eqref{2-42}, we obtain
\begin{align}
	\notag o_n(1)%&=\langle I'(u_n),u_n-u\rangle \\
    \notag
    &=||u_n||^2_E-||u||^2_E+\lambda\int_{\mathbb{R}^2}\int_{\mathbb{R}^2} \ln \left( 1+|x-y|\right)u_n^2(y)(u_n(x)-u(x))^2dxdy+o_n(1)\\
    \notag&\geq||u_n||^2_E-||u||^2_E+o_n(1).
\end{align}		
Consequently, $||u_n||_E\rightarrow ||u||_E$, which, together with $u_n\rightharpoonup u$ in $E$, implies that  $u_n\rightarrow u$ in $E$.
	%	Finally,  we need to prove 	$I'(u)=0$. Let $v\in E$, it follows from \eqref{1} that
        Therefore, 
        \begin{align*}
           ||I'(u)||_{E'} =\lim_{n\to \infty} ||I'(u_n)||_{E'}=0,
        \end{align*}
%\begin{align}
%\notag	|I'(u)v|=\lim_{n\to \infty}|I'(u_n)v|\leq \lim_{n\to \infty}\left[||I'(u_n)||_E||v||_{E'} \right]=0,
%\end{align}
which means that $u$ is a nontrivial solution of equation	\eqref{1-4}.	This completes the proof. \hfill$\square$

        \vspace{.2cm}
	\noindent{\textbf {Proof of Theorem \ref{theorem1.1}.}} 
    Let $\{u_n\}$ be as in Lemma \ref{lemma2.9}. We claim that 
    \begin{align*}
        \delta:=\liminf_{n\to \infty}||u_n||_E>0.
    \end{align*}
    Assume by contradiction that $\delta=0$. Then, passing to a subsequence if necessary, we have $u_n\to 0$ in $E$, which is a contradiction with $I(u_n)\rightarrow c_{*,1}>0$. So, $\delta>0$.
    Using Lemma \ref{lemma2.11}, we obtain that equation \eqref{1-4} has a nontrivial solution with $I(u)=c_{*,1}$.
    %there exists a critical point  $u\in E\backslash \{0\}$ of $I$ such that $I(u)=c_*$, 
   % which means that Theorem \ref{theorem1.1} (i) holds. In the following, we give the proof of Theorem \ref{theorem1.1} (ii). We set 
   Hence,
		$$M:=\{u\in E\backslash\{0\}\ |\ I'(u)=0\}$$
		%By Theorem \ref{theorem1.1} (i), it follows that $M$
        is not an empty set, and we can define
        \begin{align*}
            c_{**}:=\inf_{w\in M}I(w)\in [-\infty, c_{*,1}].
        \end{align*}
        Let $\{u_n\}\subset M$ be a minimizing sequence of $c_{**}$, that is, $I(u_n)\rightarrow c_{**}$ and $I'(u_n)=0$.
		Since $\{u_n\}$ is a sequence of solutions to equation \eqref{1-4}, it satisfies the Pohozaev identity $P(u_n)=0$, where 
        \begin{align*}%\label{r1}
P(u)\notag&:=\int_{\mathbb{R}^2}u^2dx+\int_{\mathbb{R}^2} V(x)u^2dx+\dfrac{1}{2}\int_{\mathbb{R}^2}(\nabla V\cdot x) u^2dx\\&\quad +\lambda\int_{\mathbb{R}^2}\int_{\mathbb{R}^2} \ln|x-y|u^2(x)u^2(y)dxdy+\dfrac{\lambda}{4}\left(\int_{\mathbb{R}^2} u^2dx \right)^2-\int_{\mathbb{R}^2}u^2\ln u^2dx.
\end{align*}
Therefore, $K(u_n)=a\langle I'(u_n),u_n\rangle -P(u_n)=0$.
Due to $I'(u_n)=0$, by Lemma \ref{lemma2.6}, we derive 
\begin{align*}%\label{2-47}
	||u_n||_E\geq\alpha>0.
\end{align*}
%According to the  definition of $M$	and Lemma \ref{lemma2.8}, the sequence $\{u_n\}$ satisfies \eqref{1}. Moreover, we claim 
%If \eqref{2-47} fails to hold, using Lemma \ref{lemma2.6}, we have $\langle I'(u_n),u_n\rangle >0$, which implies a contradiction. Then, \eqref{2-47} holds.
Consequently, it follows from Lemma \ref{lemma2.11} that there exists $v\in E\backslash\{0\}$ such that $u_n\rightarrow v$ in $E$, and $v$ is a nontrivial solution of equation \eqref{1-4}. Then,
\begin{align}
\notag 	c_{**}=\lim_{n\to\infty} I(u_n)=I(v)>-\infty,
\end{align}
which shows that $v$ is a ground state solution of equation \eqref{1-4}.
\hfill$\square$

	\section{Ground state solution for $\lambda<0$}\label{s4}

This section is devoted to the proof of  Theorem \ref{theorem1.3}. First, we
 verify that the energy functional $I$ satisfies the mountain pass geometry when $\lambda<0$.
		\begin{lemma}\label{lemma4.1}
		There exists $\alpha>0$ such that for all $0<\tau\leq \alpha$,
		$$ \bar{m}_{\tau}:=\inf \{I(u)\ |\ u\in E, ||u||_E=\tau\}>0$$
		and
		$$\bar{e}_{\tau}:=\inf \{\langle I'(u),u\rangle \ | \ u\in E, ||u||_E=\tau\}>0. $$
	\end{lemma}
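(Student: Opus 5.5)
The plan is to mirror the proof of Lemma \ref{lemma2.6}. The only new feature for $\lambda<0$ is that the logarithmic convolution changes roles: the $D_1$ part, which was harmless for $\lambda>0$, now carries a negative sign, while the $D_2$ part becomes nonnegative. So I start from the same decomposition $\ln|x-y|=\ln(1+|x-y|)-\ln(1+\frac{1}{|x-y|})$ and write
\begin{align*}
I(u)=\frac12\int_{\mathbb{R}^2}\left[|\nabla u|^2+u^2+V(x)u^2\right]dx-\frac12\int_{\mathbb{R}^2}u^2\ln u^2dx+\frac{\lambda}{4}D_1(u^2,u^2)-\frac{\lambda}{4}D_2(u^2,u^2).
\end{align*}
Since $\lambda<0$, the term $-\frac{\lambda}{4}D_2(u^2,u^2)$ is nonnegative and can be dropped in a lower bound. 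The logarithmic term is controlled exactly as in \eqref{c5}: $\int u^2\ln u^2\le C\|u\|_E^q$ for a fixed $q>2$.

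The remaining term to control is $\frac{\lambda}{4}D_1(u^2,u^2)=-\frac{|\lambda|}{4}D_1(u^2,u^2)$. I use \eqref{2.8.0} together with \eqref{2.1}:
\begin{align*}
D_1(u^2,u^2)\le C\|u\|_2^2\int_{\mathbb{R}^2}V(x)u^2dx\le C\|u\|_E^4.
\end{align*}
This gives
\begin{align*}
I(u)\ge \frac12\|u\|_E^2-C\|u\|_E^q-C|\lambda|\|u\|_E^4=\frac{\|u\|_E^2}{2}\left(1-C\|u\|_E^{q-2}-C\|u\|_E^2\right).
\end{align*}
Choosing $\alpha$ small yields $\bar m_\tau\ge \tau^2/4>0$ for all $0<\tau\le\alpha$.

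For $\bar e_\tau$ I argue the same way with
\begin{align*}
\langle I'(u),u\rangle=\|u\|_E^2-\int u^2\ln u^2\,dx+\lambda D_1(u^2,u^2)-\lambda D_2(u^2,u^2).
\end{align*}
Dropping the nonnegative $-\lambda D_2$ term and using the same two bounds gives
\begin{align*}
\langle I'(u),u\rangle\ge \|u\|_E^2\left(1-C\|u\|_E^{q-2}-C\|u\|_E^2\right),
\end{align*}
which is positive for $\|u\|_E=\tau\le\alpha$ after possibly shrinking $\alpha$.

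No step here is a serious obstacle. The only point needing care is that the sublinear behaviour of $u^2\ln u^2$ near $0$ only helps, since it is negative for $|u|<1$; this is why only the region $\{|u|>1\}$ matters in \eqref{c5}. The quartic estimate of the $D_1$ term through \eqref{2.8.0} and \eqref{2.1} is precisely what replaces the Hardy–Littlewood–Sobolev bound \eqref{2.8.1} used in Lemma \ref{lemma2.6}.
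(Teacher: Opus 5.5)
Your proposal is correct and follows essentially the same route as the paper's proof. You drop the nonnegative $-\frac{\lambda}{4}D_2(u^2,u^2)$ term, bound the logarithmic term via \eqref{c5}, and bound the $D_1$ term via \eqref{2.8.0} and \eqref{2.1} to get $I(u)\ge \frac12\|u\|_E^2-C\|u\|_E^q-C\|u\|_E^4$; the estimate for $\langle I'(u),u\rangle$ goes the same way.
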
				
	\noindent   {\it Proof.} 
	%For $q>2$ and $u\in E$, it follows from Lemmas \ref{lemma1} and \ref{lemmaa} that 
	%\begin{align}\label{4.1}
	%	\int_{\mathbb{R}^2} u^2\ln u^2dx\leq \int_{\{|u| > 1\}} u^2\ln u^2dx \leq C\int_{\{|u| > 1\}} |u|^qdx\leq C||u||_q^q\leq C||u||^q_E,
	%\end{align} 
	%where $C>0$ is independent of $u$. Moreover, 
	For every $u\in E$,	 	using \eqref{c5} and \eqref{2.8.0}, we deduce
	\begin{align}
		I(u)\notag&%=\dfrac{1}{2}\int_{\mathbb{R}^2}\left[ |\nabla u|^2+u^2+V(x)u^2\right]dx -\frac{1}{2}\int_{\mathbb{R}^2}u^2\ln u^2dx\\\notag&\quad +\frac{\lambda}{4}\int_{\mathbb{R}^2}\int_{\mathbb{R}^2} \ln |x-y|u^2(x)u^2(y)dxdy\\\
        =\dfrac{1}{2}\int_{\mathbb{R}^2}\left[ |\nabla u|^2+u^2+V(x)u^2\right]dx -\frac{1}{2}\int_{\mathbb{R}^2}u^2\ln u^2dx\\
        \notag&\quad +\frac{\lambda}{4}\int_{\mathbb{R}^2}\int_{\mathbb{R}^2} \ln \left( 1+|x-y|\right) u^2(x)u^2(y)dxdy-\frac{\lambda}{4}\int_{\mathbb{R}^2}\int_{\mathbb{R}^2} \ln \left( 1+\dfrac{1}{|x-y|}\right) u^2(x)u^2(y)dxdy\\
      %  \notag&\geq\dfrac{1}{2}||u||^2_E-C_1||u||^q_E+\frac{\lambda}{4}\int_{\mathbb{R}^2}\int_{\mathbb{R}^2} \ln \left( 1+{|x-y|}\right) u^2(x)u^2(y)dxdy\\
    %    \notag&\geq\dfrac{1}{2}||u||^2_E-C_1||u||^q_E+\frac{\lambda}{4}||u||_2^2\int_{\mathbb{R}^2}\ \ln \left( 1+{|x|}\right) u^2(x)dx+\frac{\lambda}{4}||u||_2^2\int_{\mathbb{R}^2}\ \ln \left( 1+{|y|}\right) u^2(y)dy\\
        \notag& \geq\dfrac{1}{2}||u||^2_E-C||u||^q_E-C||u||_2^2\int_{\mathbb{R}^2}V(x)u^2dx\\
        \notag&\geq\dfrac{1}{2}||u||^2_E-C||u||^q_E-C||u||_E^4
        =||u||_E^2\left( \frac{1}{2}-C||u||_E^{q-2}-C||u||_E^2\right) , 
	\end{align}
	where $q>2$. This means that there exists $\alpha>0$ such that  $\bar{m}_{\tau}>0$ for $0<\tau\leq\alpha$.  Likewise, we may take $\alpha > 0$ sufficiently small such that $\bar{e}_{\tau} > 0$ for all $0 < \tau \leq \alpha$.
    \qed 
	
	%Moreover,
	%\begin{align}
	%	\langle I'(u),u\rangle \notag&=\int_{\mathbb{R}^2}\left[ |\nabla u|^2+V(x)u^2\right]dx-\int_{\mathbb{R}^2}u^2\ln u^2dx\\\notag&\quad +\lambda\int_{\mathbb{R}^2}\int_{\mathbb{R}^2}\ln |x-y| u^2(x)u^2(y)dxdy\\\notag &\geq\int_{\mathbb{R}^2}\left[ |\nabla u|^2+V(x)u^2\right]dx-\int_{\mathbb{R}^2}u^2\ln u^2dx\\\notag&\quad +\lambda\int_{\mathbb{R}^2}\int_{\mathbb{R}^2}\ln \left( 1+{|x-y|}\right)  u^2(x)u^2(y)dxdy\\\notag&\geq\int_{\mathbb{R}^2} \left[ |\nabla u|^2+V(x)u^2\right] dx-\int_{\mathbb{R}^2}u^qdx+\lambda\int_{\mathbb{R}^2}\int_{\mathbb{R}^2}\ln(1+|y|) u^2(x)u^2(y)dxdy\\\notag&\quad +\lambda\int_{\mathbb{R}^2}\int_{\mathbb{R}^2}\ln(1+|x|) u^2(x)u^2(y)dxdy\\\notag&\geq ||u||_E^2-C_1||u||_E^q+\lambda C_2||u||_E^4=||u||_E^2\left(1-C_1||u||_E^{q-2}+\lambda C_2||u||_E^2 \right),
	%\end{align}
	%where $C_1, C_2>0$ are constants and $2< q<2^*$. This means that $e_{\tau}>0$ for $\alpha>0$ small enough. This completes the proof. \hfill$\square$
\vspace{.2cm}    
	Next, we show that there exists $v\in E$ such that $I(v)<0$. However, due to $\lambda<0$, the function $u_\theta$ defined in Lemma \ref{lemma2.7} is not applicable here. In order to overcome this difficulty, we define %$u_t(x):=t^ru\left(\frac{x}{t} \right)$
    \begin{align}\label{4.1}
        u_t(x):=t^ru\left(\frac{x}{t} \right)
    \end{align}
    with $r>\max\{\frac{\kappa}{2}-1,0\}$ for $u\in E$ and $t>0$. Then, we have the following lemma.
    \begin{lemma}\label{lemma4.2}
		Assume that  $u\in E\setminus \{0\}$, then %for all $x\in \mathbb{R}^2$, there exists $u_t\in E$ such that 
		$$I(u_t)\rightarrow-\infty, \quad \mathrm{as} \ \ t\rightarrow+\infty.$$
	\end{lemma}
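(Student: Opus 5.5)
The plan is a direct computation of $I(u_t)$ by the change of variables $x=ty$. The aim is to show that the term coming from the logarithmic convolution, which carries the factor $\lambda t^{4r+4}\ln t$ with $\lambda<0$, beats every other term as $t\to+\infty$. This mirrors the proof of Lemma \ref{lemma2.7}, except that there the decisive term came from the $\theta^{4a-4}\ln\theta$ factor with $\lambda>0$. Here, because the scaling is $t^r u(x/t)$ instead of $\theta^a u(\theta x)$, the sign of $\ln t$ is reversed, which is exactly what makes $\lambda<0$ work.

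\textbf{Step 1: $u_t\in E$.} We have $\int|\nabla u_t|^2dx=t^{2r}\int|\nabla u|^2dx$. Using \eqref{3.2},
\[
\int V(x)u_t^2\,dx=t^{2r+2}\int V(ty)u^2\,dy\le t^{2r+2}\max\{t^\kappa,t^{-\kappa}\}\int Vu^2\,dy<\infty .
\]

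\textbf{Step 2: scaling of each term, for $t>1$.} The remaining terms transform as follows:
\begin{align*}
\int u_t^2\,dx&=t^{2r+2}\|u\|_2^2,\\
\int u_t^2\ln u_t^2\,dx&=t^{2r+2}\Big(\int u^2\ln u^2\,dy+2r\ln t\,\|u\|_2^2\Big),\\
\int\!\!\int \ln|x-y|\,u_t^2(x)u_t^2(y)\,dxdy&=t^{4r+4}\Big(\int\!\!\int\ln|y-z|\,u^2(y)u^2(z)\,dydz+\ln t\,\|u\|_2^4\Big).
\end{align*}
The last identity uses $\ln|t(y-z)|=\ln t+\ln|y-z|$. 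The double integral on its right-hand side is finite by \eqref{2.8.0}--\eqref{2.8.1}, and $\int u^2\ln u^2\,dy$ is finite by Lemma \ref{lemma 2}.

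\textbf{Step 3: the upper bound.} Collecting the terms gives
\[
I(u_t)\le C t^{2r}+C t^{2r+2}+C t^{2r+2+\kappa}+C t^{2r+2}\ln t+C t^{4r+4}+\frac{\lambda}{4}\,t^{4r+4}\ln t\,\|u\|_2^4 .
\]
The term $-r\,t^{2r+2}\ln t\,\|u\|_2^2$ coming from the logarithmic nonlinearity is negative, so it can simply be dropped.

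\textbf{Step 4: conclusion.} Since $u\neq0$ we have $\|u\|_2>0$, and $\lambda<0$, so the last term is negative of order $t^{4r+4}\ln t$. The choice $r>\max\{\frac{\kappa}{2}-1,0\}$ gives $4r+4>2r+2+\kappa$, and clearly $4r+4>2r+2$. Hence every positive term is $o(t^{4r+4}\ln t)$, including the $Ct^{4r+4}$ term from the $\lambda$-independent part of the convolution, and therefore $I(u_t)\to-\infty$.

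The only delicate point is the potential term, because $t$ cannot be extracted from $V(ty)$. This is handled by \eqref{3.2}, which costs the factor $t^\kappa$, and that factor is exactly why the exponent condition $r>\frac{\kappa}{2}-1$ is needed. Everything else is routine bookkeeping of the scaling exponents.
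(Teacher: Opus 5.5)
Your proof is correct and follows the paper's argument. You compute $I(u_t)$ by the change of variables $x=ty$, bound the potential term via \eqref{3.2} at the cost of a factor $t^{\kappa}$, and let the negative term $\frac{\lambda}{4}t^{4r+4}\ln t\,\|u\|_2^4$ dominate using $4r+4>2r+2+\kappa$. You also say explicitly that the extra factor $\ln t$ is what beats the sign-indefinite $Ct^{4r+4}$ contribution of $\iint\ln|x-y|u^2u^2$, which the paper's one-line conclusion leaves implicit; the stray $+Ct^{2r+2}\ln t$ in your Step 3 bound is harmless but inconsistent with your remark that this term is dropped.
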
					
	\noindent   {\it Proof.} 
	First, we claim $u_t \in E $ for $u\in E$. Note that 
	\begin{align}
		\int_{\mathbb{R}^2} |\nabla u_t|^2dx&=t^{2r} \int_{\mathbb{R}^2}|\nabla u|^2dx<+\infty,\label{4.3}\\
        \int_{\mathbb{R}^2} V(x)u_t^2dx
        &=t^{2r+2}\int_{\mathbb{R}^2}V\left( tx\right) u^2dx\leq t^{2r+2}\max \{t^{\kappa},t^{-\kappa}\}\int_{\mathbb{R}^2} V(x)u^2dx<+\infty,\label{4.4}
	\end{align}
	where we used $(V_2)$ and \eqref{3.2}. So, $u_t \in E$.
	For $u\in E\backslash \{0\}$ and $t>1$, it follows from \eqref{4.3} and \eqref{4.4} that 
	\begin{align}
		I(u_t)
        %\notag&=\dfrac{1}{2}\int_{\mathbb{R}^2}\left[ |\nabla u_t|^2+u_t^2+V(x)u_t^2\right]dx -\frac{1}{2}\int_{\mathbb{R}^2}u_t^2\ln u_t^2dx\\
       % \notag&\quad +\frac{\lambda}{4}\int_{\mathbb{R}^2}\int_{\mathbb{R}^2} \ln |x-y|u_t^2(x)u_t^2(y)dxdy\\
       % \notag&=\dfrac{t^{2r}}{2}\int_{\mathbb{R}^2}|\nabla u|^2dx+\dfrac{t^{2r+2}}{2}\int_{\mathbb{R}^2} u^2dx+\dfrac {t^{2r+2}}{2}\int_{\mathbb{R}^2} V\left( tx\right) u^2dx+\dfrac{\lambda t^{4r+4}}{4} \int_{\mathbb{R}^2}\int_{\mathbb{R}^2}\ln |x-y| u^2(x)u^2(y)dxdy\\
%\notag&\quad+\frac{\lambda t^{4r+4}\ln t}{4}\left( \int_{\mathbb{R}^2}u^2dx\right) ^2 -rt^{2r+2}\ln t\int_{\mathbb{R}^2} u^2dx-\frac{t^{2r+2}}{2}\int_{\mathbb{R}^2}u^2\ln u^2dx\\
        \notag\leq&\dfrac{t^{2r}}{2}\int_{\mathbb{R}^2}|\nabla u|^2dx+\dfrac{t^{2r+2}}{2}\int_{\mathbb{R}^2} u^2dx+\dfrac {t^{2r+2+\kappa}}{2}\int_{\mathbb{R}^2} V\left( x\right) u^2dx\\
      \notag&  +\dfrac{\lambda t^{4r+4}}{4} \int_{\mathbb{R}^2}\int_{\mathbb{R}^2}\ln |x-y| u^2(x)u^2(y)dxdy+\frac{\lambda t^{4r+4}\ln t}{4}\left( \int_{\mathbb{R}^2}u^2dx\right) ^2\\
        \notag&-rt^{2r+2}\ln t\int_{\mathbb{R}^2} u^2dx-\frac{t^{2r+2}}{2}\int_{\mathbb{R}^2}u^2\ln u^2dx.
	\end{align}	
    Observe that $4r+4>2r+2+\kappa>0$, which yields that $I(u_t)\rightarrow-\infty$ as $t\rightarrow+\infty$.
%By $(V_3)$, % we obtain $2r+2+\kappa<4r+4$, therefore, 
%there exists $\tilde t>1$ such that $I(u_{\tilde t})<0.$ This completes the proof. 
\hfill$\square$

\vspace{.2cm}    
Using Lemma \ref{lemma4.2}, we can define the mountain pass value
$$c_{*,2}=\inf_{\gamma\in \Gamma}\max_{t\in [0,1]}I(\gamma (t)),$$
where
$$ \Gamma=\{\gamma\in C([0,1],E)\ |\ \gamma(0)=0, \ I(\gamma(1))<0\}\not=\emptyset.$$
According to Lemma \ref{lemma4.1}, we deduce $c_{*,2}\geq \bar{m}_{\alpha}>0.$

\vspace{.2cm}  
%In the following, recalling \eqref{r1}, we define
Now, we are going to establish a $(PS)_{c_{*,2}}$ sequence for the functional $I$ with an additional property. However, since $\lambda<0$, the auxiliary functional $\Phi$ given by \eqref{a} is no longer applicable. To address this, we 
%However, due to the singularity arising from the logarithmic convolution term and the presence of logarithmic nonlinearity, the boundedness of the Palais–Smale sequence cannot be established. Similarly to the case where \(\lambda>0\), we instead construct  Cerami sequence with an additional property that is crucial to prove its boundedness; to this end, we consider the following Banach space
%	$$\tilde E:=\mathbb{R}\  \times E  $$		
%	with the standard norm $||(b,w)||_{\tilde E}=\left( |b|^2+||w||_E^2\right)^{\frac{1}{2}} $ for $b\in \mathbb{R}, w\in E$. Furthermore, we 
introduce the map $h: \tilde E\rightarrow E$ by
	$$ h(b,w)[x]:=e^{rb}w(e^{-b}x), \quad \mathrm{for }\  b\in \mathbb{R}, w\in E \ \mathrm{and}\  x\in\mathbb{R}^2$$
	and the functional $\Psi:\tilde E\rightarrow \mathbb{R}$ by
	\begin{align}\label{4.6}
		\Psi(b,w):=&I(h(b,w))\nonumber\\
        \notag=&\dfrac{e^{2rb}}{2}\int_{\mathbb{R}^2} |\nabla w|^2dx+\dfrac{e^{2b(r+1)}}{2}\int_{\mathbb{R}^2}V\left (e^bx \right) w^2dx+\dfrac{e^{2b(r+1)}}{2}\int_{\mathbb{R}^2}w^2dx\\
        \notag&+\dfrac{\lambda e^{4b(r+1)}}{4}\int_{\mathbb{R}^2}\int_{\mathbb{R}^2}\ln |x-y|w^2(x)w^2(y)dxdy+\dfrac{\lambda be^{4b(r+1)}}{4}\left( \int_{\mathbb{R}^2} w^2dx\right) ^2\\
        &-rbe^{2b(r+1)}\int_{\mathbb{R}^2} w^2dx-\frac{e^{2b(r+1)}}{2}\int_{\mathbb{R}^2} w^2\ln w^2dx.
	\end{align}	

    Similar to Lemma \ref{lemma2.15}, we have the following lemma.
	\begin{lemma}
		It holds $\Psi\in C^1{(\tilde E,\mathbb{R})}$. Furthermore,
		\begin{align}
		\Psi_b(b,w)\notag&=re^{2rb}\int_{\mathbb{R}^2}|\nabla w|^2dx+(r+1)e^{2b(r+1)}\int_{\mathbb{R}^2}V\left({e^b x} \right) w^2dx\\
        \notag&\quad+\frac{e^{2b(r+1)}}{2}\int_{\mathbb{R}^2}\left( \nabla V\left( {e^bx}\right) \cdot(e^bx) \right) w^2dx\\\notag&\quad +e^{2b(r+1)}\int_{\mathbb{R}^2}w^2dx+\lambda (r+1)e^{4b(r+1)}\int_{\mathbb{R}^2}\int_{\mathbb{R}^2}\ln|x-y|w^2(x)w^2(y)dxdy\\
        \notag&\quad+\left(\lambda b(r+1)e^{4b(r+1)}+\dfrac{\lambda e^{4b(r+1)}}{4} \right)\left( \int_{\mathbb{R}^2}w^2dx\right)^2\\
        \notag&\quad-(r+1)e^{2b(r+1)}\int_{\mathbb{R}^2} w^2\ln w^2dx -2r(r+1)be^{2b(r+1)} \int_{\mathbb{R}^2} w^2dx 
			%\\\notag&=r\int_{\mathbb{R}^2} |\nabla h|^2dx+\int_{\mathbb{R}^2}h^2dx+(r+1)\int_{\mathbb{R}^2} V(x)h^2dx\\\notag&\quad +\dfrac{1}{2}\int_{\mathbb{R}^2} (\nabla V\cdot x)h^2dx+\lambda(r+1)\int_{\mathbb{R}^2}\int_{\mathbb{R}^2} \ln |x-y|h^2(x)h^2(y)dxdy\\\notag&\quad +\dfrac{\lambda}{4}\left(\int_{\mathbb{R}^2} h^2dx\right) ^2-(r+1)\int_{\mathbb{R}^2} h^2\ln h^2dx=rI'(h)h+P(h)=T(h)
		\end{align}		
		and 
		\begin{align}
\langle\Psi_w(b,w),v\rangle\notag&=e^{2rb}\int_{\mathbb{R}^2}\nabla v\nabla wdx+e^{2b(r+1)}\int_{\mathbb{R}^2}V\left({e^bx} \right)vwdx \\\notag&\quad +\lambda  e^{4b(r+1)}\int_{\mathbb{R}^2}\int_{\mathbb{R}^2}\ln |x-y|w^2(y)w(x)v(x)dxdy\\\notag&\quad+\lambda be^{4b(r+1)}\int_{\mathbb{R}^2}w^2(y)dy\int_{\mathbb{R}^2}v(x)w(x)dx\\\notag&\quad-2rbe^{2b(r+1)}\int_{\mathbb{R}^2}vwdx-e^{2b(r+1)}\int_{\mathbb{R}^2}vw\ln w^2dx\\\notag&=\langle I'(h(b,w)),h(b,v)\rangle,
		\end{align}		
		$	\mathrm{for}\ \mathrm{all}\ b\in \mathbb{R}\ \mathrm{and}\ v, w\in E	$.
	\end{lemma}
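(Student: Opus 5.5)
The plan is to follow the proof of Lemma \ref{lemma2.15} almost verbatim, with $\eta$ replaced by $h$ and the dilation $x\mapsto x/e^{b}$ replaced by $x\mapsto e^{b}x$. I would start by checking formula \eqref{4.6}. A change of variables $x=e^{b}y$ gives
\[
\int_{\mathbb{R}^2}|\nabla h(b,w)|^2dx=e^{2rb}\int_{\mathbb{R}^2}|\nabla w|^2dy,
\qquad
\int_{\mathbb{R}^2}V(x)h(b,w)^2dx=e^{2b(r+1)}\int_{\mathbb{R}^2}V(e^{b}y)w^2dy,
\]
and the analogous identity for $\int h(b,w)^2$. The identity $\ln h^2=2rb+\ln w^2$ produces the term $-rbe^{2b(r+1)}\int w^2$. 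The splitting $\ln|e^{b}(y-z)|=b+\ln|y-z|$ produces the double integral and the term $\frac{\lambda b e^{4b(r+1)}}{4}(\int w^2)^2$. Since $h(b,w)\in E$ for $w\in E$ (by the computation \eqref{4.3}--\eqref{4.4} with $t=e^{b}$) and $I$ is well defined on $E$ by Lemma \ref{lemma 2}, $\Psi$ is well defined on $\tilde E$.

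\textbf{Continuity and the $b$-derivative.} Every term of \eqref{4.6} except the potential term is an explicit smooth function of $b$ multiplied by a quantity that is $C^1$ in $w$. For those quantities I would use Lemma \ref{lemma 2}, the bounds \eqref{w1}--\eqref{1-7}, and \eqref{2.8}. The potential term $\int V(e^{b}x)w^2dx$ is handled exactly as in \eqref{a1}--\eqref{a5}: \eqref{3.2} gives $V(e^{b}x)\le \max\{e^{\kappa b},e^{-\kappa b}\}V(x)$, and dominated convergence then applies. For the $b$-derivative I would repeat \eqref{a6}--\eqref{a9}. By the Newton--Leibniz formula,
\[
\frac1\theta\int_{\mathbb{R}^2}\bigl[V(e^{b+\theta}x)-V(e^{b}x)\bigr]w^2dx=\int_0^1\int_{\mathbb{R}^2}\bigl(\nabla V(e^{b+\theta t}x)\cdot e^{b+\theta t}x\bigr)w^2dxdt .
\]
Here $(V_2)$ together with \eqref{3.2} gives the integrable majorant $\kappa C V(x)w^2$, uniformly for $|\theta|<1$. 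Hence
\[
\partial_b\int_{\mathbb{R}^2} V(e^{b}x)w^2dx=+\int_{\mathbb{R}^2}\bigl(\nabla V(e^{b}x)\cdot e^{b}x\bigr)w^2dx,
\]
which explains the plus sign in $\Psi_b$. The remaining contributions to $\Psi_b$ come from differentiating the exponential prefactors. The product rule applied to $\frac{\lambda b e^{4b(r+1)}}{4}$ and to $-rbe^{2b(r+1)}$ yields the terms $\lambda b(r+1)e^{4b(r+1)}+\frac{\lambda}{4}e^{4b(r+1)}$, $-2r(r+1)be^{2b(r+1)}$ and $-re^{2b(r+1)}$. The last of these combines with $(r+1)e^{2b(r+1)}\int w^2$ to give the stated coefficient $e^{2b(r+1)}$. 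Continuity of $\Psi_b$ on $\tilde E$ follows as in \eqref{3.7}, again using $(V_2)$ to dominate $|\nabla V(e^{b}x)\cdot e^{b}x|$ by $\kappa V(e^{b}x)$.

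\textbf{The $w$-derivative.} For fixed $b$ the map $w\mapsto h(b,w)$ is linear and bounded from $E$ to $E$. The chain rule therefore gives $\langle\Psi_w(b,w),v\rangle=\langle I'(h(b,w)),h(b,v)\rangle$, and the explicit formula follows by the same change of variables. For continuity of $\Psi_w$ in $E'$ along $(b_n,w_n)\to(b,w)$, the only delicate term is $\int V(e^{b_n}x)w_nv\,dx$. I would treat it as in \eqref{3.16}--\eqref{3.17}: apply H\"older's inequality with weight $V$, use \eqref{3.2}, and apply dominated convergence to $w^2|V(e^{b_n}x)-V(e^{b}x)|$. This gives an $o_n(1)\|v\|_E$ bound uniform in $v$. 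The other terms are continuous jointly in $(b,w)$ because their $b$-dependence is through smooth scalar factors.

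The main obstacle is the justification of the $b$-derivative of the potential term and its continuity, since $V(e^{b}\cdot)$ cannot be scaled out. This is exactly where $(V_2)$ and \eqref{3.2} supply the uniform majorants. The rest is bookkeeping of exponents.
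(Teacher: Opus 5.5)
Your proposal is correct and takes the same route as the paper. The paper omits this proof, remarking only that it is similar to Lemma \ref{lemma2.15}, and your plan is exactly that adaptation: a change of variables to obtain \eqref{4.6}, and dominated convergence via \eqref{3.2} and $(V_2)$ for the potential term, where the $\nabla V$ sign flips because the dilation is now $x\mapsto e^{b}x$. You finish with the chain rule for $\Psi_w$, and your bookkeeping, including the cancellation giving the coefficient $e^{2b(r+1)}$ on $\int w^2$, checks out.
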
	
%\noindent   {\it Proof.} Since the argument is similar to Lemma \ref{lemma2.15}, we omit the proof.\hfill$\square$

  By an argument analogous to that of Lemma \ref{lemma2.9}, we derive a Cerami sequence of the functional $I$ with an additional property.
	\begin{lemma}\label{lemma4.4}
		There exists $\{u_n\}\subset  E$ such that
		\begin{align*}
			I(u_n)\rightarrow c_{*,2}, \quad ||I'(u_n)||_{E'}(1+||u_n||_E)\rightarrow0\quad \mathrm{and }\quad  T(u_n)\rightarrow0
		\end{align*}
		as $n\rightarrow\infty,$ where 
        \begin{align}
		T(u)%=r\langle I'(u),u\rangle +P(u)
        \notag&:=r\int_{\mathbb{R}^2} |\nabla u|^2dx+\int_{\mathbb{R}^2}u^2dx+(r+1)\int_{\mathbb{R}^2} V(x)u^2dx\\\notag&\quad +\dfrac{1}{2}\int_{\mathbb{R}^2} (\nabla V\cdot x)u^2dx+\lambda(r+1)\int_{\mathbb{R}^2}\int_{\mathbb{R}^2} \ln |x-y|u^2(x)u^2(y)dxdy\\\notag&\quad +\dfrac{\lambda}{4}\left(\int_{\mathbb{R}^2} u^2dx\right) ^2-(r+1)\int_{\mathbb{R}^2} u^2\ln u^2dx.
	\end{align}
	\end{lemma}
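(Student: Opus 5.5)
We mimic the proof of Lemma \ref{lemma2.9}, now with the map $h$ and the functional $\Psi$ from \eqref{4.6} in place of $\eta$ and $\Phi$.

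\textbf{Minimax setup.} Let
\[
\bar\Gamma=\{\bar\gamma\in C([0,1],\tilde E)\ |\ \bar\gamma(0)=(0,0),\ \Psi(\bar\gamma(1))<0\},\qquad c_0:=\inf_{\bar\gamma\in\bar\Gamma}\max_{t\in[0,1]}\Psi(\bar\gamma(t)).
\]
\begin{itemize}
\item Since $h(0,w)=w$, the set $\{(0,\gamma)\ |\ \gamma\in\Gamma\}$ is contained in $\bar\Gamma$, so $\bar\Gamma\neq\emptyset$ and $c_0\le c_{*,2}$.
\item Conversely, $h(b,0)=0$ and $h$ is continuous on $\tilde E$, by the same dominated-convergence argument used for \eqref{3.7} based on \eqref{3.2}. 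Hence $h\circ\bar\gamma\in\Gamma$ for every $\bar\gamma\in\bar\Gamma$, which gives $c_0=c_{*,2}\ge\bar m_\alpha>0$.
\end{itemize}
Take $N=[0,1]$ and $N_0=\{0,1\}$; the boundary level is $b=0<c_0$. For each $n$, pick $\gamma_n\in\Gamma$ with $\max I(\gamma_n)\le c_{*,2}+1/n^2$ and set $\bar\gamma_n=(0,\gamma_n)$. Apply Lemma \ref{lemma2.5} to $\Psi$ on $Y=\tilde E$ with $\varepsilon=1/n^2$ and $\delta=1/n$. This produces $(b_n,w_n)$ such that
\[
\Psi(b_n,w_n)\to c_{*,2},\qquad (1+\|(b_n,w_n)\|_{\tilde E})\,\|\Psi'(b_n,w_n)\|_{\tilde E'}\to0,\qquad \operatorname{dist}\bigl((b_n,w_n),\{0\}\times\gamma_n([0,1])\bigr)\to0.
\]
In particular $b_n\to0$.

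\textbf{Identifying $\Psi_b$ with $T$.} Set $u_n:=h(b_n,w_n)$. Directly differentiating $I(h(b,w))$ with respect to $b$, or comparing with the displayed formula for $\Psi_b$, gives $\Psi_b(b,w)=T(h(b,w))$. Each term of $T(h(b,w))$ rescales to the corresponding term of $\Psi_b$:
\begin{itemize}
\item $V(x)$ becomes $V(e^bx)$ and $\nabla V\cdot x$ becomes $\nabla V(e^bx)\cdot e^bx$;
\item the logarithm contributes $rb$ and $b$ terms through $\ln h^2=2rb+\ln w^2(e^{-b}\cdot)$ and $\ln|e^b(x-y)|$.
\end{itemize}
Testing $\Psi'(b_n,w_n)$ on $(1,0)$ yields $T(u_n)\to0$, and $I(u_n)=\Psi(b_n,w_n)\to c_{*,2}$.

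\textbf{Cerami property.} Fix $w\in E$ and define $v_n(x):=e^{-rb_n}w(e^{b_n}x)$, so that $h(b_n,v_n)=w$. Then
\[
(1+\|u_n\|_E)\,|\langle I'(u_n),w\rangle|=(1+\|u_n\|_E)\,|\Psi'(b_n,w_n)(0,v_n)|\le(1+\|u_n\|_E)\,\|\Psi'(b_n,w_n)\|_{\tilde E'}\,\|v_n\|_E .
\]
The main obstacle is controlling the norm comparisons uniformly in $w$, because the potential term does not scale. From $(V_2)$ and \eqref{3.2},
\[
|V(e^{\pm b_n}x)-V(x)|\le(e^{\kappa|b_n|}-1)V(x)=O(|b_n|)\,V(x).
\]
Then
\[
\|v_n\|_E^2=e^{-2rb_n}\int_{\mathbb{R}^2}|\nabla w|^2\,dx+e^{-2(r+1)b_n}\int_{\mathbb{R}^2}V(e^{-b_n}x)\,w^2\,dx=(1+o_n(1))\,\|w\|_E^2,
\]
with $o_n(1)$ independent of $w$. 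The same computation gives $\|u_n\|_E=(1+o_n(1))\|w_n\|_E\le C(1+\|(b_n,w_n)\|_{\tilde E})$.

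Combining these estimates gives $(1+\|u_n\|_E)\,|\langle I'(u_n),w\rangle|\le o_n(1)\,\|w\|_E$, that is, $\|I'(u_n)\|_{E'}(1+\|u_n\|_E)\to0$. This proves Lemma \ref{lemma4.4}.
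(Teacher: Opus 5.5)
Your proposal is correct and is exactly the argument the paper intends. The paper only says the lemma follows ``by an argument analogous to that of Lemma \ref{lemma2.9}'', and you rerun that proof with $h$ and $\Psi$ in place of $\eta$ and $\Phi$: the identity $\Psi_b(b,w)=T(h(b,w))$, the test function $v_n=e^{-rb_n}w(e^{b_n}\cdot)$, and the norm comparisons via \eqref{3.2} that are uniform in $w$. The one loosely justified step is the continuity of $h:\tilde E\to E$, which rests on strong continuity of dilations rather than literally on the argument for \eqref{3.7}; the paper takes this for granted as well.
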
	
	%\noindent   {\it Proof.} The proof is similar to that of Lemma \ref{lemma2.9}, and is therefore omitted here. \hfill$\square$

    In the following, we will show that the Cerami sequence given in Lemma \ref{lemma4.4} is bounded in $E$.
	
	\begin{lemma}\label{lemma4.5}
		Let $\{u_n\}\subset  E$ be a sequence  such that
		\begin{align}\label{4.9}
			c:=\sup_{n\in \mathbb{N}}I(u_n)<+\infty, \quad ||I'(u_n)||_{E'}(1+||u_n||_E)\rightarrow0\quad \mathrm{and }\quad  T(u_n)\rightarrow0
		\end{align}
		as $n\rightarrow\infty.$ Then, $\{u_n\}$ is bounded in $E$.
	\end{lemma}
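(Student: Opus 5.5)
The plan is to mimic Lemma \ref{lemma2.10}: take a linear combination of $I(u_n)$ and $T(u_n)$ that eliminates the logarithmic convolution term, whose sign we cannot control. Since $I$ contains $\frac{\lambda}{4}\int\int\ln|x-y|u^2(x)u^2(y)\,dxdy$ and $T$ contains $\lambda(r+1)$ times the same integral, I will estimate
\begin{align*}
c+o_n(1)\geq I(u_n)-\frac{1}{4(r+1)}T(u_n),
\end{align*}
which follows from \eqref{4.9} because $T(u_n)\to0$. The Cerami condition on $I'(u_n)$ is not needed for this step.

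Computing the coefficients, the right-hand side equals
\begin{align*}
&\frac{r+2}{4(r+1)}\int_{\mathbb{R}^2}|\nabla u_n|^2dx+\frac{2r+1}{4(r+1)}\int_{\mathbb{R}^2}u_n^2dx+\frac14\int_{\mathbb{R}^2}V(x)u_n^2dx-\frac{1}{8(r+1)}\int_{\mathbb{R}^2}(\nabla V\cdot x)u_n^2dx\\
&\quad-\frac{\lambda}{16(r+1)}\Big(\int_{\mathbb{R}^2}u_n^2dx\Big)^2-\frac14\int_{\mathbb{R}^2}u_n^2\ln u_n^2dx .
\end{align*}
I bound each term as follows.
\begin{itemize}
\item By $(V_2)$, $-\frac{1}{8(r+1)}\int(\nabla V\cdot x)u_n^2\,dx\geq-\frac{\kappa}{8(r+1)}\int Vu_n^2\,dx$. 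Combined with the $\frac14\int Vu_n^2\,dx$ term, the total $V$-coefficient is $\frac{2r+2-\kappa}{8(r+1)}>0$, because $r>\frac{\kappa}{2}-1$.
\item Since $\lambda<0$, the quartic term equals $\frac{|\lambda|}{16(r+1)}\|u_n\|_2^4\ge0$, so it helps rather than hurts.
\item For the logarithmic term, as in \eqref{c5} with Lemma \ref{lemmaa}, $\int u_n^2\ln u_n^2\,dx\leq C\|u_n\|_q^q$ for a fixed $q\in(2,3)$.
\end{itemize}
Altogether this gives
\begin{align*}
c+o_n(1)\geq C_1\Big(\|\nabla u_n\|_2^2+\int_{\mathbb{R}^2}V(x)u_n^2dx\Big)+\frac{|\lambda|}{16(r+1)}\|u_n\|_2^4-C\|u_n\|_q^q .
\end{align*}

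The main obstacle is controlling $\|u_n\|_q^q$. Instead of the contradiction/rescaling argument of Lemma \ref{lemma2.10}, here I would absorb it directly. The Gagliardo--Nirenberg inequality gives $\|u_n\|_q^q\leq C\|u_n\|_2^2\|\nabla u_n\|_2^{q-2}$. Young's inequality with exponents $\frac{2}{q-2}$ and $\frac{2}{4-q}$ then gives
\begin{align*}
C\|u_n\|_2^2\|\nabla u_n\|_2^{q-2}\leq \delta\|\nabla u_n\|_2^2+C_\delta\|u_n\|_2^{\frac{4}{4-q}}.
\end{align*}
Since $q<3$, we have $\frac{4}{4-q}<4$. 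So, choosing $\delta$ small, the first term is absorbed by the gradient term. The second is dominated by $\frac{|\lambda|}{32(r+1)}\|u_n\|_2^4+C$, since $s^{4/(4-q)}\le\epsilon s^4+C_\epsilon$.

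After these absorptions, $\|\nabla u_n\|_2^2$, $\|u_n\|_2^4$ and $\int V u_n^2\,dx$ are all bounded by $c+C+o_n(1)$. Hence $\{u_n\}$ is bounded in $E$, which is the claim.
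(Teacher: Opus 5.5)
Your proof is correct. It shares the paper's starting point and differs only in how the term $C\|u_n\|_q^q$ is absorbed. The paper begins with the same lower bound for $I(u_n)-\frac{1}{4(r+1)}T(u_n)$, obtained from $(V_2)$, $r>\frac{\kappa}{2}-1$, the sign of $-\lambda$, and Lemma \ref{lemmaa} with $2<q<3$. It then proves that $\|\nabla u_n\|_2$ is bounded by contradiction. It rescales to $w_n(x)=s_n^2u_n(s_nx)$ with $s_n=\|\nabla u_n\|_2^{-1/2}$, so that $\|\nabla w_n\|_2=1$, and multiplies the inequality by $s_n^4$. Applying Gagliardo--Nirenberg to $w_n$ and using the quartic term gives $\|w_n\|_2^2=O(s_n^{6-2q})$, which leads to the contradiction $\frac{r+2}{4(r+1)}\le Cs_n^4+o_n(1)$. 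Only then does it apply Gagliardo--Nirenberg a second time, reducing to a comparison of $C\|u_n\|_2^2$ with $\frac{|\lambda|}{16(r+1)}\|u_n\|_2^4$. Your Young's inequality step, with exponents $\frac{2}{q-2}$ and $\frac{2}{4-q}$, does both stages at once. There the restriction $q<3$ appears as $\frac{4}{4-q}<4$, the same homogeneity fact the paper encodes as $6-2q>0$ and $12-4q>0$. Your version is shorter, avoids passing to a subsequence and arguing by contradiction, and yields an explicit bound on $\|u_n\|_E$ in terms of $c,\lambda,r,\kappa$. The paper's rescaling gains nothing extra here beyond mirroring Lemma \ref{lemma2.10}. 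Your direct absorption would also work there, since in that case the gradient, $V$ and quartic coefficients are likewise positive.
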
				
		
	\noindent   {\it Proof.} First, using Lemma \ref{lemmaa} and $(V_2)$, we obtain
	\begin{align}\label{4.11}
		c+o_n(1)\notag&\geq I(u_n)-\dfrac{1}{4(r+1)}T(u_n)\\\notag&=\dfrac{r+2}{4(r+1)}\int_{\mathbb{R}^2}|\nabla u_n|^2dx+\dfrac{2r+1}{4(r+1)}\int_{\mathbb{R}^2}u_n^2dx\\\notag&\quad+\frac{1}{4}\int_{\mathbb{R}^2}V(x)u_n^2dx-\dfrac{1}{8(r+1)}\int_{\mathbb{R}^2}\left( \nabla V\cdot x\right)u_n^2dx\\\notag&\quad-\frac{\lambda}{16(r+1)}\left( \int_{\mathbb{R}^2}u_n^2dx\right)^2-\dfrac{1}{4}  \int_{\mathbb{R}^2}u_n^2\ln u_n^2dx\\
        \notag&\geq\dfrac{r+2}{4(r+1)}\int_{\mathbb{R}^2}|\nabla u_n|^2dx+\dfrac{2r+1}{4(r+1)}\int_{\mathbb{R}^2}u_n^2dxx\\
        &\quad+\frac{2r+2-\kappa}{8(r+1)}\int_{\mathbb{R}^2}V(x)u_n^2d-\frac{\lambda}{16(r+1)}\left( \int_{\mathbb{R}^2}u_n^2dx\right)^2-C  \int_{\mathbb{R}^2}|u_n|^qdx,
	\end{align}
	where $2<q<3$.
	
    Next, we claim
	\begin{align}\label{4.12}
	\int_{\mathbb{R}^2}|\nabla u_n|^2dx\leq C, \quad \mathrm{for} \ \ n\in \mathbb{N}.
	\end{align} 
	Suppose, for contradiction, that the claim fails. Upon passing to a subsequence if necessary, we may assume that 	
	\begin{align}
	\notag 	\int_{\mathbb{R}^2}|\nabla u_n|^2dx\rightarrow \infty, \quad \mathrm{as} \ \ n\rightarrow \infty. 
		\end{align}
		We set $s_n:=||\nabla u_n||^{{-\frac{1}{2}}}_2$ for $n\in \mathbb{N}$, which means $s_n\rightarrow 0$ as $n\rightarrow\infty$. Define $w_n(x):=s_n^2 u_n(s_n x) \in E$. %We then introduce the rescaled functions  $w_n\in E$ via $w_n(x)=s_n^2 u_n(s_n x) $. It follows that
		%\begin{align}\label{4.13}
		%\int_{\mathbb{R}^2}|\nabla w_n|^2dx=s_n^4\int_{\mathbb{R}^2} |\nabla u_n|^2dx=1
		%\end{align}
		%and
		%\begin{align}\label{4.14}
%\int_{\mathbb{R}^2}|w_n|^qdx=s_n^{2q}\int_{\mathbb{R}^2}\left[ u_n(s_nx)\right] ^qdx=s_n^{2q-2}\int_{\mathbb{R}^2}|u_n|^qdx.
		%\end{align}
		%Using the Gagliardo-Nirenberg, we deduce
	%	\begin{align}
	%	\notag	\int_{\mathbb{R}^2}|w_n|^qdx\leq C \int_{\mathbb{R}^2}w_n^2dx\left( \int_{\mathbb{R}^2}|\nabla w_n|^2dx\right) ^{\frac{q-2}{2}}=C\int_{\mathbb{R}^2}w_n^2dx.
	%	\end{align}
		Multiplying both sides of \eqref{4.11} by  $s_n^4$ and invoking \eqref{2-28}-\eqref{3.35}, we derive
		\begin{align}\label{l2}
		c	{s_n^4}+o\left( s_n^4\right) 
        \notag&\geq\dfrac{s_n^4(r+2)}{4(r+1)}\int_{\mathbb{R}^2}|\nabla u_n|^2dx+\dfrac{s_n^4(2r+1)}{4(r+1)}\int_{\mathbb{R}^2}u_n^2dx-Cs_n^4 \int_{\mathbb{R}^2}|u_n|^qdx\\
        \notag&\quad+\frac{s_n^4\left(2r+2-\kappa \right)}{8(r+1)}\int_{\mathbb{R}^2}V(x)u_n^2dx-\frac{\lambda s_n^4}{16(r+1)}\left( \int_{\mathbb{R}^2}u_n^2dx\right)^2\\
        \notag&=\dfrac{r+2}{4(r+1)}+\dfrac{s_n^2(2r+1)}{4(r+1)}\int_{\mathbb{R}^2}w_n^2dx+\frac{s_n^2\left(2r+2-\kappa \right)}{8(r+1)}\int_{\mathbb{R}^2}V\left({{s_nx}} \right) w_n^2dx\\
        \notag&\quad-\frac{\lambda}{16(r+1)}\left( \int_{\mathbb{R}^2}w_n^2dx\right)^2 -Cs_n^{6-2q}\int_{\mathbb{R}^2}|w_n|^qdx\\
        \notag&\geq\dfrac{r+2}{4(r+1)}+\dfrac{s_n^2(2r+1)}{4(r+1)}\int_{\mathbb{R}^2}w_n^2dx+\frac{s_n^2\left(2r+2-\kappa \right)}{8(r+1)}\int_{\mathbb{R}^2}V\left({{s_nx}} \right) w_n^2dx\\
        \notag&\quad-\frac{\lambda}{16(r+1)}\left( \int_{\mathbb{R}^2}w_n^2dx\right)^2 -Cs_n^{6-2q}\int_{\mathbb{R}^2}|w_n|^2dx\\
        &\geq%\dfrac{s_n^2(2r+1)}{4(r+1)}\int_{\mathbb{R}^2}w_n^2dx
        -\frac{\lambda}{16(r+1)}\left( \int_{\mathbb{R}^2}w_n^2dx\right)^2 -Cs_n^{6-2q}\int_{\mathbb{R}^2}|w_n|^2dx,
		\end{align}
		which implies 
\begin{align}
		\int_{\mathbb{R}^2} w_n^2dx&=O\left(s_n^{6-2q} \right),\label{4.15}\\
        s_n^2\int_{\mathbb{R}^2} V\left( s_n x\right)w_n^2dx
    \leq &Cs_n^4+{Cs_n^{6-2q}}\int_{\mathbb{R}^2} w_n^2dx
    =O\left(s_n^{12-4q} \right).\label{l1}
	\end{align}
Using \eqref{l2}-\eqref{l1}, we get
\begin{align*}
   C s_n^4\geq \dfrac{r+2}{4(r+1)}+o_n(1),
\end{align*}
which is a contradiction. Hence, \eqref{4.12}
holds.

It follows from the Gagliardo-Nirenberg inequality and \eqref{4.12} that
$$\int_{\mathbb{R}^2}|u_n|^qdx\leq C \int_{\mathbb{R}^2}u_n^2dx\left( \int_{\mathbb{R}^2}|\nabla u_n|^2dx\right) ^{\frac{q-2}{2}}\leq C\int_{\mathbb{R}^2}u_n^2dx.$$ 
Combining this with \eqref{4.11}, we obtain
\begin{align}
	c+o_n(1)%\notag&\geq\dfrac{r+2}{4(r+1)}\int_{\mathbb{R}^2}|\nabla u_n|^2dx+\dfrac{2r+1}{4(r+1)}\int_{\mathbb{R}^2}u_n^2dx+\frac{1}{4}\int_{\mathbb{R}^2}V(x)u_n^2dx\\
  %  \notag&\quad-\dfrac{1}{8(r+1)}\int_{\mathbb{R}^2}\left( \nabla V\cdot x\right)u_n^2dx-\frac{\lambda}{16(r+1)}\left( \int_{\mathbb{R}^2}u_n^2dx\right)^2-C  \int_{\mathbb{R}^2}|u_n|^qdx\\
  %  \notag&\geq\dfrac{r+2}{4(r+1)}\int_{\mathbb{R}^2}|\nabla u_n|^2dx+\dfrac{2r+1}{4(r+1)}\int_{\mathbb{R}^2}u_n^2dx+\frac{2(r+1)-\kappa}{8}\int_{\mathbb{R}^2}V(x)u_n^2dx\\
   % \notag&\quad-\frac{\lambda}{16(r+1)}\left( \int_{\mathbb{R}^2}u_n^2dx\right)^2-C  \int_{\mathbb{R}^2}u_n^{\frac{5}{2}}dx\\
    \notag&\geq\dfrac{r+2}{4(r+1)}\int_{\mathbb{R}^2}|\nabla u_n|^2dx+\dfrac{2r+1}{4(r+1)}\int_{\mathbb{R}^2}u_n^2dx+\frac{2(r+1)-\kappa}{8}\int_{\mathbb{R}^2}V(x)u_n^2dx\\
    \notag&\quad-\frac{\lambda}{16(r+1)}\left( \int_{\mathbb{R}^2}u_n^2dx\right)^2-C  \int_{\mathbb{R}^2}u_n^2dx,
\end{align}		
which means 
$$\int_{\mathbb{R}^2}u_n^2dx\leq C\quad \mathrm{and} \quad \int_{\mathbb{R}^2} V(x)u_n^2dx\leq C.$$
Then,	$\{u_n\}$ is bounded in $E$. This completes the proof. \hfill$\square$

\vspace{.2cm}
The next lemma gives the compactness for the sequence satisfying \eqref{4.9}.

	\begin{lemma}
	Let $\{u_n\}\subset E$ be a sequence satisfying \eqref{4.9}, and assume $\liminf_{n\to \infty}||u_n||_E>0$. Then, %one of the following occurs:
		%\\(i) $||u_n||_E\rightarrow 0$ and $I(u_n)\rightarrow 0$ as $n\rightarrow\infty$.
		there exists $u\in E\backslash\{0\}$ such that $u_n\rightarrow u$ in $E$ as $n\rightarrow\infty$,
and $u$ is a solution of equation \eqref{1-4}.
\end{lemma}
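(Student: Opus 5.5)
The proof follows that of Lemma \ref{lemma2.11}. The one change is that, since $\lambda<0$, the term involving the bilinear form $D_1$ now has the unfavourable sign. It can no longer be discarded as nonnegative and must instead be shown to be $o_n(1)$.

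\textbf{Step 1: weak limit.} By Lemma \ref{lemma4.5}, $\{u_n\}$ is bounded in $E$. Passing to a subsequence, $u_n\rightharpoonup u$ in $E$, and by Lemma \ref{lemma1}(ii), $u_n\to u$ in $L^s(\mathbb{R}^2)$ for every $s\ge 2$.

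\textbf{Step 2: $u\not\equiv 0$.} Suppose $u\equiv0$. Splitting $\ln|x-y|$ into the $D_1$ and $D_2$ parts, I write $\langle I'(u_n),u_n\rangle=o_n(1)$ as
\[
\|u_n\|_E^2=o_n(1)-\lambda D_1(u_n^2,u_n^2)+\lambda D_2(u_n^2,u_n^2)+\int_{\mathbb{R}^2}u_n^2\ln u_n^2\,dx .
\]
Since $\lambda<0$, the term $\lambda D_2(u_n^2,u_n^2)$ is nonpositive and can be dropped. By \eqref{c5}, the logarithmic term is at most $C\|u_n\|_q^q\to0$. By \eqref{2.8.0},
\[
|\lambda|D_1(u_n^2,u_n^2)\le C\|u_n\|_2^2\int_{\mathbb{R}^2}V u_n^2\,dx\le C\|u_n\|_2^2\|u_n\|_E^2 .
\]
This tends to $0$ because $\|u_n\|_E$ is bounded and $\|u_n\|_2\to0$. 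Hence $\|u_n\|_E\to0$, which contradicts $\liminf_{n\to\infty}\|u_n\|_E>0$.

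\textbf{Step 3: strong convergence.} I expand $\langle I'(u_n),u_n-u\rangle=o_n(1)$ exactly as in \eqref{2-38}. The logarithmic term is $o_n(1)$ by \eqref{e1}, and the $D_2$ term is $o_n(1)$ by \eqref{3.41}. For the $D_1$ term, the decomposition leading to \eqref{2-42} together with Lemma \ref{lemma2.3} reduces it to
\[
D_1\bigl(u_n^2,(u_n-u)^2\bigr)+o_n(1).
\]
Using $\ln(1+|x-y|)\le\ln(1+|x|)+\ln(1+|y|)$, I bound
\[
D_1\bigl(u_n^2,(u_n-u)^2\bigr)\le \|u_n\|_2^2\int_{\mathbb{R}^2}\ln(1+|x|)(u_n-u)^2dx+\|u_n-u\|_2^2\int_{\mathbb{R}^2}\ln(1+|x|)u_n^2dx .
\]
The second summand is $o_n(1)$ by \eqref{2.2} and $L^2$ convergence.

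\textbf{Main obstacle: the first summand.} Here the weight is unbounded, so I plan to use the coercivity in $(V_1)$ as follows.
\begin{itemize}
\item Fix $\epsilon>0$. Since $\ln(1+|x|)/V(x)\to0$ as $|x|\to\infty$ (as in the proof of Lemma \ref{lemma1}(i)), choose $R$ with $\ln(1+|x|)\le\epsilon V(x)$ for $|x|\ge R$.
\item On $\{|x|\ge R\}$ the integral is at most $\epsilon\|u_n-u\|_E^2\le C\epsilon$.
\item On $B_R$ it is at most $\ln(1+R)\|u_n-u\|_2^2\to0$.
\end{itemize}
Hence $\int_{\mathbb{R}^2}\ln(1+|x|)(u_n-u)^2dx\to0$, and the whole $\lambda D_1$ contribution is $o_n(1)$ whatever the sign of $\lambda$.

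\textbf{Conclusion.} This yields $\|u_n\|_E^2-\|u\|_E^2=o_n(1)$. Combined with weak convergence, this gives $u_n\to u$ in $E$. Since $I\in C^1(E,\mathbb{R})$, we get $I'(u)=\lim_{n\to\infty} I'(u_n)=0$, so $u$ is a nontrivial solution of \eqref{1-4}.
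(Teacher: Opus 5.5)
Your proof is correct. Steps 1 and 2 coincide with the paper's. The difference is only in how you dispose of the $D_1$ term in the strong-convergence step, which is the one place where $\lambda<0$ matters.

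The paper does not use Lemma \ref{lemma2.3} or the reduction \eqref{2-42} here. It bounds $D_1\bigl(u_n^2,u_n(u_n-u)\bigr)$ directly: it splits $\ln(1+|x-y|)\le\ln(1+|x|)+\ln(1+|y|)$ and applies Cauchy--Schwarz so that the whole weight falls on $u_n$. This gives, in \eqref{l3}, the bound $C\bigl(\int_{\mathbb{R}^2}\ln^2(1+|x|)u_n^2dx\bigr)^{1/2}\|u_n-u\|_2+o_n(1)$. The paper then uses the pointwise estimate $\ln^2(1+|x|)\le CV(x)$, proved like \eqref{2.1.1}, and concludes from the unweighted convergence $\|u_n-u\|_2\to0$.

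You instead reduce to the nonnegative term $D_1\bigl(u_n^2,(u_n-u)^2\bigr)$, as in the case $\lambda>0$. You then show $\int_{\mathbb{R}^2}\ln(1+|x|)(u_n-u)^2dx\to0$ by a tail-splitting argument, which amounts to compactness of $E\hookrightarrow L^2(\ln(1+|x|)\,dx)$.

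The two routes trade off as follows:
\begin{itemize}
\item The paper's route is shorter and avoids both the $\epsilon$-argument and the lemma cited from \cite{CW}. It needs the stronger growth condition $\ln^2(1+|x|)\le CV(x)$, which $(V_1)$ supplies.
\item Your route only needs $\ln(1+|x|)=o(V(x))$. It also shows $u_n\to u$ in the weighted norm of $X$, which is why the sign of $\lambda$ becomes irrelevant.
\end{itemize}

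Once you have that weighted compactness, Lemma \ref{lemma2.3} is unnecessary. The bound $\int\ln(1+|x|)|u_n||u_n-u|\,dx\le(\int\ln(1+|x|)u_n^2dx)^{1/2}(\int\ln(1+|x|)(u_n-u)^2dx)^{1/2}$ handles $D_1\bigl(u_n^2,u_n(u_n-u)\bigr)$ directly, without first splitting off $D_1\bigl(u_n^2,u(u_n-u)\bigr)$.
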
	
\noindent   {\it Proof.}	By Lemma \ref{lemma4.5},   $\{u_n\}$  is bounded in $E$, then there exists $u\in E$ such that $u_n\rightharpoonup  u\in E $. Using Lemma \ref{lemma1}, we deduce $u_n\rightarrow u$ in $L^s(\mathbb{R}^2)(2\leq s<\infty).$

In the following, we will prove $u\not \equiv 0$.	
Assume by contradiction $u\equiv 0$, then $u_n\rightarrow 0 $ in $L^s{(\mathbb{R}^2)}$
$	\left( s \geq 2\right). $ Using \eqref{2.8.0} and  \eqref{4.9}, we deduce
\begin{align}
	\notag&\int_{\mathbb{R}^2}\left[ |\nabla u_n|^2+V(x)u_n^2\right] dx-\lambda\int_{\mathbb{R}^2}\int_{\mathbb{R}^2} \ln \left( 1+\dfrac{1}{|x-y|}\right) u_n^2(x)u_n^2(y)dxdy\\
    \notag=&\langle I'(u_n),u_n\rangle -\lambda\int_{\mathbb{R}^2}\int_{\mathbb{R}^2}\ln \left(1+{|x-y|} \right)u_n^2(x)u_n^2(y)dxdy+\int_{\mathbb{R}^2}u_n^2\ln u_n^2dx\\
    \notag\leq &o_n(1) +C||u_n||_2^2\int_{\mathbb{R}^2} V(x)u_n^2dx+C||u_n||_q^q\rightarrow0,\quad \mathrm{as}\quad n\rightarrow\infty,
\end{align}
which implies $||u_n||_E\rightarrow 0$ as $n\rightarrow \infty$. This contradicts $\liminf_{n\to \infty}||u_n||_E>0$. Thus, $u\not \equiv 0$.
%\begin{align}\label{4.18}
%	\int_{\mathbb{R}^2}\left[ |\nabla u_n|^2+V(x)u_n^2\right] dx\rightarrow 0\quad \mathrm{as}\quad n\rightarrow\infty
%\end{align}
%and 
%\begin{align}\label{4.19}
%	\int_{\mathbb{R}^2}\int_{\mathbb{R}^2} \ln \left( 1+\dfrac{1}{|x-y|}\right) u_n^2(x)u_n^2(y)dxdy\rightarrow 0 \quad \mathrm{as}\quad n\rightarrow\infty.
%\end{align} 
%Moreover, it follows from \eqref{w1} that
%\begin{align}\label{w2}
%	\int_{\mathbb{R}^2}\int_{\mathbb{R}^2}\ln (1+|x-y|)u_n^2(x)u_n^2(y)dxdy\leq C||u_n||_2^2\int_{\mathbb{R}^2}V(x)u_n^2dx\rightarrow0, \quad \mathrm{as} \ n\rightarrow\infty.
%\end{align}  
%Then,  using \eqref{4.18}-\eqref{w2}, we obtain
%\begin{align}
%	\int_{\mathbb{R}^2} u_n^2\ln u_n^2dx\notag&=-\langle I'(u_n),u_n\rangle +\int_{\mathbb{R}^2}\left[ |\nabla u_n|^2+V(x)u_n^2\right] dx\\\notag&\quad +\lambda\int_{\mathbb{R}^2}\int_{\mathbb{R}^2}\ln |x-y|u_n^2(x)u_n^2(y)dxdy\rightarrow0\quad \mathrm{as}\quad n\rightarrow\infty.
%\end{align}
%Thus
%\begin{align}
%	I(u_n)\notag&=\dfrac{1}{2}\int_{\mathbb{R}^2}\left[ |\nabla u_n|^2+u_n^2+V(x)u_n^2\right]dx -\frac{1}{2}\int_{\mathbb{R}^2}u_n^2\ln u_n^2dx\\\notag&\quad +\frac{\lambda}{4}\int_{\mathbb{R}^2}\int_{\mathbb{R}^2} \ln |x-y|u_n^2(x)u_n^2(y)dxdy\rightarrow0,\quad \mathrm{as}\quad n\rightarrow\infty,
%\end{align}
%which means a contradiction.

Next,  we prove $u_n\rightarrow u$ in $E$ as $n\rightarrow \infty $. By \eqref{4.9} and \eqref{e1}, we deduce
\begin{align}\label{4.20}
	o_n(1)\notag&=\langle I'(u_n), u_n-u\rangle\\
    \notag&=\int_{\mathbb{R}^2}\left[  \nabla u_n\nabla(u_n-u)+ V(x)u_n(u_n-u)\right] dx-\int_{\mathbb{R}^2} u_n(u_n-u)\ln u_n^2dx\\
    \notag&\quad+\lambda\int_{\mathbb{R}^2}\int_{\mathbb{R}^2} \ln|x-y|u_n^2(y)u_n(x)\left[ u_n(x)-u(x)\right] dxdy\\
   % \notag&=||u_n||^2_E-||u||^2_E+\lambda\int_{\mathbb{R}^2}\int_{\mathbb{R}^2} \ln |x-y|u_n^2(y)u_n(x)(u_n(x)-u(x))dxdy+o_n(1)\\
    \notag&=||u_n||^2_E-||u||^2_E+\lambda\int_{\mathbb{R}^2}\int_{\mathbb{R}^2}\ln\left(1+|x-y| \right)u_n^2(y)u_n(x)\left[ u_n(x)-u(x)\right] dxdy\\
    &\quad-\lambda\int_{\mathbb{R}^2}\int_{\mathbb{R}^2}\ln\left( 1+\dfrac{1}{|x-y|}\right)u_n^2(y)u_n(x)\left[ u_n(x)-u(x)\right] dxdy+o_n(1).
\end{align}
By \eqref{1-7}, we obtain
\begin{align}\label{4.13.1}
\left|\int_{\mathbb{R}^2}\int_{\mathbb{R}^2}\ln\left( 1+\dfrac{1}{|x-y|}\right)u_n^2(y)u_n(x)(u_n(x)-u(x))dxdy\right|
\leq C||u_n||_{\frac{8}{3}}^3||u_n-u||_{\frac{8}{3}}\rightarrow 0.
\end{align}
We deduce from \eqref{w1} and the H\"older inequality that 
\begin{align}\label{l3}
\notag&	\quad\left|\int_{\mathbb{R}^2}\int_{\mathbb{R}^2}\ln (1+|x-y|)u_n^2(y)u_n(x)\left[ u_n(x)-u(x)\right] dxdy\right|\\
\notag&\leq\int_{\mathbb{R}^2}\int_{\mathbb{R}^2}\ln (1+|x|)u_n^2(y)\left|u_n(x)\left[ u_n(x)-u(x)\right] \right|dxdy\\
\notag&\quad+\int_{\mathbb{R}^2}\int_{\mathbb{R}^2}\ln (1+|y|)u_n^2(y)\left|u_n(x)\left[ u_n(x)-u(x)\right] \right|dxdy\\
\notag&\leq||u_n||_2^2\left( \int_{\mathbb{R}^2}\ln ^2(1+|x|)u_n^2dx\right)^{\frac{1}{2}}||u_n-u||_2+C||u_n||_2||u_n-u||_2\int_{\mathbb{R}^2}V(y)u_n^2(y)dy\\
&\leq C\left( \int_{\mathbb{R}^2}\ln ^2(1+|x|)u_n^2dx\right)^{\frac{1}{2}}||u_n-u||_2+o_n(1).
\end{align}
Analogous to \eqref{2.1.1}, we have $\ln ^2(1+|x|)\leq CV(x)$. Hence, 
\begin{align*}
    \int_{\mathbb{R}^2}\ln ^2(1+|x|)u_n^2dx
    \leq C\int_{\mathbb{R}^2} V(x)u_n^2dx\leq C,
\end{align*}
which, together with \eqref{l3}, implies
\begin{align}\label{4.15.1}
    \int_{\mathbb{R}^2}\int_{\mathbb{R}^2}\ln (1+|x-y|)u_n^2(y)u_n(x)\left[ u_n(x)-u(x)\right] dxdy
    =o_n(1).
\end{align}
%\begin{align}
%\left|	\int_{\mathbb{R}^2}\ln (1+|x|)u_n(x)\left[ u_n(x)-u(x)\right] dx\right|
%\notag&\leq\left( \int_{\mathbb{R}^2}\ln ^2(1+|x|)u_n^2dx\right)^{\frac{1}{2}}||u_n-u||_2\\
%\notag&\leq C||u_n-u||_2\left(\int_{\mathbb{R}^2}|x|^\gamma u_n^2(x)dx \right)^{\frac{1}{2}} \\
%\notag&\leq C||u_n-u||\left(\int_{\mathbb{R}^2} V(x)u_n^2dx \right)^{\frac{1}{2}} \rightarrow0, \quad \mathrm{as}\quad n\rightarrow\infty.
%\end{align}
Thus, it follows from \eqref{4.20}, \eqref{4.13.1} and \eqref{4.15.1} that $||u_n||_E\rightarrow ||u||_E$ as $n\rightarrow\infty$. Combining this with $u_n\rightharpoonup u$ in $E$, we obtain $u_n\rightarrow u$ in $E$.
%Finally,  we need to prove 	$I'(u)=0$. Let $v\in E$, it follows from \eqref{4.9} that
%\begin{align}
%	\notag	|I'(u)v|=\lim_{n\to \infty}|I'(u_n)v|\leq \lim_{n\to \infty}\left[||I'(u_n)||_E||v||_{E'} \right]=0,
%\end{align}
Thus, $u$ is a nontrivial solution of equation \eqref{1-4}.	\hfill$\square$

\vspace{.2cm}
\noindent{\textbf {Proof of Theorem \ref{theorem1.3}.}}
 The proof is similar to that of Theorem \ref{theorem1.1}, and is therefore omitted here.

	\section{The symmetric setting}\label{s5}

This section focuses on the proof of Theorem \ref{theorem1.2} and Corollary \ref{corollary1}. Some arguments in the proof of Theorem \ref{theorem1.2} are analogous to those of Theorem \ref{theorem1.1} and will therefore only be outlined. 

\vspace{.2cm}
\noindent\textbf {Proof of Theorem \ref{theorem1.2}.} We prove only the case $\lambda> 0$, as the proof for $\lambda< 0$ is similar.
It follows from Lemmas
\ref{lemma2.6} and \ref{lemma2.7} that the functional $I$ possesses a mountain pass geometry on $E_G$. 
%Thus, the definition of the mountain pass value given below
Moreover, 
$$0<m_{\alpha}\leq c_{G}=\inf_{\gamma\in \Gamma_G}\max_{s\in [0,1]}I(\gamma(s))<\infty,$$
 where
$$\Gamma_G:=\left\{\gamma\in C\left([0,1], E_G\right)\ |\  \gamma(0)=0, I(\gamma(1))<0 \right\}.$$
 Furthermore, proceeding analogously to the proof of Lemma \ref{lemma2.9}, there exists $\{u_n\}\subset E_G$ such that 
\begin{align*}%\label{q3}
	I(u_n) \to c_{G}, \quad \|I'(u_n)\|_{E_G'} \bigl(1 + \|u_n\|_{E_G}\bigr) \to 0 \quad \text{and} \quad K(u_n) \to 0,\quad \mathrm{as}\quad n\rightarrow\infty.
\end{align*}
%where $K$ is defined \eqref{q3}.  
Similar to Lemma \ref{lemma2.11}, there exists $u\in E_G$ such that $u_n\to u$ in $E$ and moreover $\|I'(u)\|_{E_G'}=0$.
%For any  $u_n\in E_G$ satisfying \eqref{q1}, it follows from  \eqref{q1} that
%\begin{align}
%	\notag 	[f\diamond u](x)=\lim_{n\rightarrow\infty}[f\diamond u_n](x)=\varrho (f)\lim_{n\to \infty} u_n(f^{-1}x)=\lim_{n\rightarrow\infty} u_n=u,
%\end{align}
% which means $u\in E_G.$  Furthermore,  similar to Lemma \ref{lemma2.10} and \ref{lemma2.11}, we deduce $$ I'(u)v = 0\ \mathrm{for} \ \mathrm{all}\ u \in E_G\ and \ v \in E_G^\perp,$$ where \(E_G^\perp\) denotes the orthogonal complement of \(E_G\) in \(E\). 
Using the principle of symmetric criticality (see \cite[Theorem 1.28]{W}), we obtain 
  $$\|I'(u)\|_{E'}=0.$$
Then, $u$ is a G-invariant solution of equation \eqref{1-4} with $I(u) =c_{G}$.  Thus, we can define
$$c_{*,G}:=\inf\{I(w)\ |\ w\in E_G\backslash\{0\} \ \mathrm{solves}\ \eqref{1-4} \}.$$ 
By a proof similar to that of Theorem \ref{theorem1.1}, we deduce that $c_{*,G}$ is achieved by some $\tilde u\in E_G\setminus\{0\}$ and $\tilde u$ is a $G$-invariant ground state solution of equation \eqref{1-4}. \hfill$\square$

\vspace{.2cm}
\noindent\textbf {Proof of Corollary \ref{corollary1}.} Similarly to Example 1.1, suppose that for any given $n\in\mathbb{N}_+$, the subgroup $G_n\subset O(2)$ of order $2\cdot 3^n$ is generated by the $\frac{\pi}{3^n}$-rotation (counter-clockwise)
$$f_n\in O(2),\quad   f_n:=\begin{pmatrix}
\cos \frac{\pi}{3^n} & -\sin \frac{\pi}{3^n} \\
 \sin \frac{\pi}{3^n} & \cos \frac{\pi}{3^n}
\end{pmatrix},$$
then 
$$f_n x=\left(x_1\cos \frac{\pi}{3^n}-x_2 \sin \frac{\pi}{3^n}, x_1 \sin \frac{\pi}{3^n}+x_2\cos \frac{\pi}{3^n}\right), \quad \mathrm{for}\ x=(x_1,x_2)\in \mathbb{R}^2.$$
Let $\sigma_n\colon G_n\to\{-1,1\}$ denote the homomorphism defined by
\[
\sigma_n(f_n^i)=(-1)^i \quad\text{for }\ i=1,2,\cdots,2\cdot 3^n,
\]
where $f_n^i$ is the $\frac{i\pi}{3^n}$-rotation.
Clearly, $E_{G_{n+1}}\subset E_{G_n}$ for all $n\in\mathbb{N}_+$, which yields
$$c_{G_{n+1}}\geq c_{G_n}\geq c_{G_1}, \mathrm{for}\ \mathrm{all}\ n\in \mathbb{N}_+.$$
From Theorem \ref{theorem1.2}, we conclude that equation \eqref{1-4} admits a nonradial sign-changing solution $u_n\in{E}_{G_n}$ satisfying $I(u_n)=c_{G_n}$ for every $n\in\mathbb{N}_+$.

Next, we will prove
$$I(u_n)=c_{G_n}\rightarrow +\infty, \quad \mathrm{as}\ n\rightarrow\infty. $$
Suppose, by contradiction, that there exists a constant $C>0$ such that 
$$\limsup_{n\rightarrow\infty}c_{G_n}\leq C \quad \mathrm{for}\ \mathrm{all} \ n\in\mathbb{N}_+.$$ 
Then, arguing as in Lemma \ref{lemma2.10}, we deduce that $\{u_n\}$ is bounded in $E$. In view of Lemma \ref{lemma2.11}, we obtain that $$u_n\rightarrow \tilde u \ \ \mathrm{in}\ \  E \quad \mathrm{as}\ n\rightarrow\infty,$$ 
%Due to  $Fix(G)=\{0\}$, $\tilde u\in E\setminus \{0\}$ holds. 
%Since $\|I'(u_n)\|_{{E}^{-1}} \to 0$, passing to the limit yields $\|I'(\tilde u)\|_{{E}^{-1}} = 0$, which implies that 
and $\tilde u$ is a nontrivial solution of equation \eqref{1-4}. By virtue of Lemma \ref{lemma2.13}, we have $\tilde u \in C(\mathbb{R}^2)$. For any fixed $m \in \mathbb{N}_+$, we have
\[
u_n \in E_{G_n} \subset E_{G_m}\quad \mathrm{for\ all} \ n \geq m.
\]
Then, we derive
\[
\tilde u \in E_{G_m} \quad \text{for every } m \in \mathbb{N}_+,
\]
which yields
\[
\tilde u(x) = -\tilde u(f_m x), \quad \mathrm{for\ all}\, x \in \mathbb{R}^2,\; m \in \mathbb{N}_+.
\]
It is known that $f_mx\rightarrow x$ for $x\in \mathbb{R}^2$ as $n\rightarrow\infty$, thus $\tilde u=0$, which means a contradiction.  This completes the proof.  \hfill$\square$

\bigskip

\noindent{\bf{Data availability statement}}  

No new data was created or analysed in this study.
\smallskip

\noindent{\bf{Acknowledgments}} 

This work is supported by National Natural Science Foundation of China (Nos. 12671136, 12271223), National Natural Science Foundation of Jiangxi
(Nos. 20252BAC230002, 20242BAB26001) and Double-high talents in Jiangxi Province.
\smallskip

\noindent{\bf{Conflict of interest statement}}  

All authors declare that they have no conflict of interest.


\smallskip



		
		\begin{thebibliography}{99}
			{\footnotesize


      %\bibitem{AC} M. Alfaro, R. Carles, {\it Superexponential growth or decay in the heat equation  with a logarithmic nonlinearity}, Dyn. Partial Differ. Equ., {\bf 14} ( 2017) 343-358.
      
        \bibitem{A}  A. Ambrosetti,  {\it On Schr\"{o}dinger-Poisson systems}, Milan J. Math., {\bf 76} (2008) 257–274. 

        \bibitem{Az}
       A. Azzollini,
       {\it The planar Schr\"{o}dinger-Poisson system with a positive potential}, 
      Nonlinearity, {\bf 34} (2021) 5799–5820.
        
      % \bibitem{ADP}  A. Azzollini, P. d’Avenia, A. Pomponio,  {\it Schr\"{o}dinger equation in dimension two with competing logarithmic self-interaction}, Calc. Var. Partial Differential Equations,   {\bf 64} (2025) 130-152.
 \bibitem{BJL} J. Bellazzini, L. Jeanjean, T. Luo,  {\it Existence and instability of standing waves with prescribed norm for a class of Schr\"{o}dinger-Poisson equations}, Proc. London Math. Soc.,  {\bf 107} (2013) 303–339.
      
       \bibitem{BF}
V. Benci, D. Fortunato,
{\it An eigenvalue problem for the Schr\"{o}dinger-Maxwell equations},
Topol. Methods Nonlinear Anal.,
{\bf 11} (1998) 283--293.

\bibitem{BBL}
R. Benguria, H. Br\'ezis, E. Lieb,
{\it The Thomas-Fermi-von Weizs\"acker theory of atoms and molecules},
Comm. Math. Phys., {\bf 79} (1981) 167--180.


      % \bibitem{BM1}  I. Białynicki-Birula,  J. Mycielski,  {\it Nonlinear wave mechanics}, Ann. Physics, {\bf 100} (1976) 62–93.
       
     %  \bibitem{BM2} I. Białynicki-Birula,   J. Mycielski,  {\it Wave equations  with logarithmic nonlinearities}, Bull. Acad. Polon. Sci. Sér. Sci. Math. Astronom. Phys., {\bf 23} (1975) 461–466.
       
        
       \bibitem{CG} R. Carles, I. Gallagher,   {\it Universal dynamics for the defocusing logarithmic Schr\"{o}dinger equation}, Duke Math. J.,  {\bf 167} (2018) 1761–1801.

       \bibitem{CL}
I. Catto, P.-L. Lions,
{\it Binding of atoms and stability of molecules in Hartree and Thomas-Fermi type theories},
Comm. Partial Differential Equations, {\bf 18} (1993) 1149--1159.

\bibitem{CV}
G. Cerami, G. Vaira, 
{\it Positive solutions for some non-autonomous Schr\"{o}dinger-Poisson systems},
J. Differential Equations,
{\bf 248} (2010) 521--543.

\bibitem{CRT}
S. Chen, V. Rădulescu, X. Tang, 
{\it Multiple normalized solutions for the planar Schr\"{o}dinger-Poisson system with critical exponential growth},
Math. Z.,
{\bf 306} (2024) Paper No. 50, 32 pp.

%\bibitem{CJ} 
%S. Cingolani, L. Jeanjean,
%{\it Stationary waves with prescribed $L^2$-norm for the planar Schr\"{o}dinger-Poisson system},
% SIAM J. Math. Anal.,
% {\bf 51} (2019) 3533–3568.
       
       \bibitem{CW} S. Cingolani,  T. Weth,{ \it On the planar Schr\"{o}dinger-Poisson system}, 
       Ann. Inst. H. Poincaré C Anal. Non Linéaire, {\bf 33} (2016) 169–197. 
       
       %	\bibitem{1}   S. Cingolani, T. Weth,  On the planar Schr\"{o}dinger-Poisson system, Ann. Inst. Henri Poincare, 33 (2016) 169–97.
      \bibitem{DMS}   P. d’Avenia, E. Montefusco,  M. Squassina,  {\it On the logarithmic Schr\"{o}dinger equation}, Commun. Contemp. Math.,  {\bf 16} (2014) Paper No. 1350032, 15 pp.

\bibitem{DFJ}
     J. Dolbeault, R. Frank, L. Jeanjean, 
{\it Logarithmic estimates for mean-field models in dimension two and the Schr\"{o}dinger-Poisson system},
C. R. Math. Acad. Sci. Paris, {\bf 359} (2021) 1279–1293.


       \bibitem{DW} M. Du, T. Weth, {\it Ground states and high energy solutions of the planar Schr\"{o}dinger-Poisson system}, Nonlinearity,  {\bf 30} (2017) 3492–3515. 
       
        \bibitem{FW}
H. Fan, Q. Wang, 
{\it Prescribed number nodes of nodal solutions for a planar Schr\"{o}dinger-Poisson system},
 J. Differential Equations,
{\bf 475} (2026), Paper No. 114457, 52 pp.


        \bibitem{GPV}  V. Georgiev, F. Prinari, N. Visciglia, {\it On the radiality of constrained minimizers to the Schr\"{o}dinger-Poisson-Slater energy}, Ann. Inst. H. Poincaré C Anal. Non Linéaire, {\bf 29 }(2012) 369–376.

\bibitem{GLL}
        Y. Guo, W. Liang, Y. Li, 
       {\it Existence and uniqueness of constraint minimizers for the planar Schr\"{o}dinger-Poisson system with logarithmic potentials}, 
       J. Differential Equations, 
       {\bf 369} (2023) 299–352.

     % \bibitem{HIT} J. Hirata, N. Ikoma, K. Tanaka,  {\it  Nonlinear scalar field equations  in $R^N$: mountain pass, symmetric mountain pass approaches}, Topol. Methods Nonlinear Anal., {\bf 35} (2010) 253–276.

\bibitem{JWZ} Y. Jiang, Z. Wang, H. Zhou, {\it Positive solutions for Schr\"{o}dinger-Poisson-Slater system with coercive potential}, Topol. Methods Nonlinear Anal., {\bf 57} (2021) 427--439.

        \bibitem{JZ} Y. Jiang, H. Zhou,  {\it Schr\"{o}dinger-Poisson system with steep potential well}, J. Differential Equations, {\bf 251} (2011) 582–608.
        
        \bibitem{J} L. Jeanjean,   {\it Existence of solutions with prescribed norm for semilinear elliptic equations},  Nonlinear Anal.,  {\bf 28} (1997) 1633–1659.

        \bibitem{LLTY}
  B. Li, W. Long, Z. Tang, J. Yang, 
  {\it Uniqueness of positive bound states with multiple bumps for Schr\"{o}dinger-Poisson system}, 
  Calc. Var. Partial Differential Equations, 
  {\bf 60} (2021) Paper No. 240, 28 pp.
					
		\bibitem{3} G. Li,  C. Wang,  {\it The existence of a nontrivial solution to a nonlinear elliptic problem of linking type without the Ambrosetti-Rabinowitz condition}, Ann. Acad. Sci. Fenn. Math., {\bf 36} (2011) 461–480.

        % \bibitem{19} G. Li, S. Peng, S. Yan,  {\it Infinitely many positive solutions for the nonlinear Schr\"{o}dinger-Poisson system}, Commun. Contemp. Math., {\bf 12} (2010) 1069–92.

     \bibitem{25} E. Lieb, {\it Sharp constants in the Hardy-Littlewood-Sobolev and related inequalities}, Ann. of Math. (2), {\bf 118} (1983) 349–374.

     \bibitem{L}
E. Lieb,
{\it Thomas-Fermi and related theories of atoms and molecules},
Rev. Modern Phys., {\bf 53} (1981) 603--641.

\bibitem{Lions}
P.-L. Lions, {\it Solutions of Hartree-Fock equations for Coulomb systems},
Comm. Math. Phys., {\bf 109} (1987) 33--97.
         
	   %\bibitem{LM2} S. Liu, S. Mosconi, {\it On the Schr\"{o}dinger-Poisson system with indefinite
			%		potential and 3-sublinear nonlinearity}, J. Differential Equations, {\bf 269} (2020) 689-712.

    

    
    
      \bibitem{LM1} S. Liu, S. Mosconi,  {\it On the Schr\"{o}dinger-Poisson system with indefinite potential and 3-sublinear nonlinearity},  J. Differential Equations,  {\bf 269} (2020) 689–712.

      \bibitem{LRTZ}
      Z. Liu, V. Rădulescu, C. Tang, J. Zhang, 
      {\it Another look at planar Schr\"{o}dinger-Newton systems}, J. Differential Equations,
     {\bf 328} (2022) 65–104.

\bibitem{LRZ}
      Z. Liu, V. Rădulescu, J. Zhang, 
     {\it A planar Schr\"{o}dinger-Newton system with Trudinger-Moser critical growth}, 
     Calc. Var. Partial Differential Equations,
    {\bf 62} (2023) Paper No. 122, 31 pp.

      \bibitem{MRS}
P. Markowich, C. Ringhofer, C. Schmeiser,
{\it Semiconductor Equations}, Springer-Verlag, Vienna, (1990).

 \bibitem{R} D. Ruiz, {\it The Schr\"{o}dinger-Poissom equation under the effect of a nonlinear local term}, J. Funct. Anal., {\bf 237} (2006) 655--674. 

\bibitem{SSY1}
L. Shan, W. Shuai, J. Ye,
{\it Multiple solutions for the nonlinear Schr\"odinger-Poisson system with a partial confinement},
J. Differential Equations, {\bf 451} (2026) Paper No. 113815, 41 pp.

 \bibitem{S}   W. Shuai, {\it Multiple solutions for logarithmic Schr\"{o}dinger equations}, Nonlinearity, {\bf 32} (2019) 2201–2225.

  %    \bibitem{SS1}  M. Squassina, A. Szulkin,  {\it Multiple solutions to logarithmic Schr\"{o}dinger equations with periodic potential}, Calc. Var. Partial Differential Equations,  {\bf 54} (2015) 585–597.
				   
  %    \bibitem{SS2} M. Squassina, A. Szulkin,  {\it Erratum to: multiple solutions to logarithmic Schr\"{o}dinger equations with periodic potential},  Calc. Var. Partial Differential Equations, {\bf 56} (2017) Paper No. 56, 4 pp.

      \bibitem{S1} J. Stubbe, {\it Bound states of two-dimensional Schr\"{o}dinger-Newton equations}, arXiv:0807.4059, (2008).

\bibitem{T}
W. Troy,
    {\it Uniqueness of positive ground state solutions of the logarithmic Schr\"{o}dinger equation},
     Arch. Ration. Mech. Anal., 
    {\bf 222} (2016) 1581–1600.
				
	 % \bibitem{13}	  W. Shuai, Multiple solutions for logarithmic Schr\"{o}dinger equations, Nonlinearity 32 (2019) 2201–2225. 
     \bibitem{WZZ} Z. Wang, X. Zeng, H. Zhou, {\it Nonradial symmetry and blow-up of ground states for a Schr\"{o}dinger-Poisson system with logarithmic term}, Nonlinearity,  {\bf 38} (2025) Paper No. 015011, 23 pp.

   
     \bibitem{WZ} Z. Wang, H. Zhou,  {\it Sign-changing solutions for the nonlinear Schr\"{o}dinger-Poisson system in $\mathbb{R}^3$}, Calc. Var. Partial Differential Equations,  {\bf 52} (2015) 927–943.   

    
\bibitem{WZ1}
    Z. Wang, C. Zhang,
    {\it Convergence from power-law to logarithm-law in nonlinear scalar field equations},
     Arch. Ration. Mech. Anal., 
     {\bf 231} (2019) 45–61.
    
    \bibitem{W} M. Willem,  {\it Minimax Theorems}, Birkh\"{a}user Boston (1996).
      
    
    

     
     
     
 
 






				
			}
		\end{thebibliography}
	\end{document}